\newtheorem{thrm}{Theorem}[section]
\newtheorem{lem}[thrm]{Lemma}
\newtheorem{cor}[thrm]{Corollary}
\newtheorem{prop}[thrm]{Proposition}
\newtheorem{conj}[thrm]{Conjecture}
\theoremstyle{definition}
\newtheorem{defn}[thrm]{Definition}
\newtheorem{exmple}[thrm]{Example}
\newtheorem{rmk}[thrm]{Remark}
\newtheorem{ques}[thrm]{Question}
\newtheorem{constr}[thrm]{Construction}
\newtheorem{conv}[thrm]{Convention}
\DeclareMathOperator{\Eff}{\overline{Eff}}
\DeclareMathOperator{\Nef}{Nef}
\DeclareMathOperator{\Chow}{Chow}
\DeclareMathOperator{\Supp}{Supp}
\DeclareMathOperator{\vol}{vol}
\DeclareMathOperator{\mc}{mc}
\DeclareMathOperator{\rmc}{rmc}
\DeclareMathOperator{\mob}{mob}
\DeclareMathOperator{\ratmob}{ratmob}
\DeclareMathOperator{\ch}{ch}
\DeclareMathOperator{\chdim}{chdim}
\DeclareMathOperator{\wmc}{wmc}
\DeclareMathOperator{\wmob}{wmob}
\DeclareMathOperator{\mult}{mult}
\begin{document}

\title{Volume-type functions for numerical cycle classes}
\author{Brian Lehmann}
\thanks{The author was supported by NSF Award 1004363.}
\address{Department of Mathematics, Boston College  \\
Chestnut Hill, MA \, \, 02467}
%\address{Department of Mathematics, Rice University \\
%Houston, TX \, \, 77005}

\email{lehmannb@bc.edu}

\begin{abstract}
A numerical equivalence class of $k$-cycles is said to be big if it lies in the interior of the closed cone generated by effective classes.  We construct analogues for arbitrary cycle classes of the volume function for divisors which distinguishes big classes from boundary classes.
\end{abstract}

\maketitle

\section{Introduction}

Let $X$ be an integral projective variety over an algebraically closed field.  We will let $N_{k}(X)_{\mathbb{Z}}$ denote the group of numerical classes of $k$-cycles on $X$ and $N_{k}(X) := N_{k}(X)_{\mathbb{Z}} \otimes \mathbb{R}$.  The pseudo-effective cone $\Eff_{k}(X) \subset N_{k}(X)$ is defined to be the closure of the cone generated by all effective $k$-cycles; this cone encodes the homology of all $k$-dimensional subvarieties of $X$. While the pseudo-effective cone has been thoroughly studied for divisors and curves, much less is known about cycles in general. %; $\Eff_{k}(X)$ encodes the homology of all $k$-dimensional subvarieties of $X$.
%Classes that lie in the interior of the cone -- known as big classes -- are expected to exhibit special geometric properties.

One of the most basic problems concerning $\Eff_{k}(X)$ is to find geometric criteria that distinguish classes on the interior of the cone -- known as big classes -- from boundary classes.
This question has interesting links to a number of other geometric problems (see for example \cite[Theorem 0.8]{voisin10}, \cite[Remark 6.4]{djv13}, and Section \ref{ratmob0cycles}).   Our goal is to give several geometric characterizations of big cycles similar to well-known criteria for divisors.

%\begin{exmple}
%One might expect that a subvariety with ``positive'' normal bundle will have a big numerical class.  However, \cite[Example 2.4]{voisin10} shows that even a subvariety with an ample normal bundle need not be big, indicating the need for a different geometric approach.
%However, there are other conjectural geometric descriptions of big cycles due to Voisin.  For example, \cite{voisin10} predicts that bigness of $\alpha$ can be characterized using the tangency behavior of cycles representing $\alpha$ and shows that this conjecture has interesting ties to the Hodge theory of complete intersections in projective space.
%\end{exmple}

% Our goal is to give geometric characterizations of big cycles similar to the well-known criteria for codimension $1$ cycles and dimension $1$ cycles.

An important tool for understanding big divisor classes is the volume function.  The volume of a Cartier divisor $L$ is the asymptotic rate of growth of dimensions of sections of $L$.  More precisely, if $X$ has dimension $n$,
\begin{equation*}
\vol(L) := \limsup_{m \to \infty} \frac{\dim H^{0}(X,\mathcal{O}_{X}(mL))}{m^{n}/n!}.
\end{equation*}
It turns out that the volume is an invariant of the numerical class of $L$ and satisfies many advantageous geometric properties.  On a smooth variety $X$, divisors with positive volume are precisely the divisors with big numerical class.

Our first generalization of the volume function is geometric in nature.  One can interpret the volume of a divisor $L$ as an asymptotic measurement of the number of  general points contained in members of $|mL|$ as $m$ increases.  \cite{delv11} suggests studying a similar notion for arbitrary cycles.  Given a class $\alpha \in N_{k}(X)_{\mathbb{Z}}$, we define
\begin{equation*}
\mc(\alpha) = \max \left\{ b \in \mathbb{Z}_{\geq 0} \, \left| \, \begin{array}{c} \textrm{any }b\textrm{ general points of } X \textrm{ are contained}  \\ \textrm{in an effective cycle of class } \alpha  \end{array} \right. \right\}.
\end{equation*}
We would then like to understand the asymptotic behavior of $\mc(m\alpha)$ as $m$ increases.  The ``expected'' growth behavior can be predicted by considering complete intersections in $\mathbb{P}^{n}$ as in Example \ref{ciexample}: for the hyperplane class $\alpha$ on %: an intersection of $n-k$ general elements of $|\mathcal{O}(d)|$ has dimension $k$, degree $d^{n-k}$, and will contain about $\frac{1}{n!}d^{n}$ general points.  Thus one expects (and can easily verify)
 projective space we have $\mc(m\alpha) \sim Cm^{\frac{n}{n-k}}$ for some constant $C$.  %The mobility function identifies the best possible constant $C$.  (See Definition \ref{mobdefn} for a more precise formulation.)

\begin{defn}
Let $X$ be an integral projective variety of dimension $n$ and suppose $\alpha \in N_{k}(X)_{\mathbb{Z}}$ for $0 \leq k < n$.  The mobility of $\alpha$ is
\begin{equation*}
\mob(\alpha) = \limsup_{m \to \infty} \frac{ \mc(m\alpha) }{m^{\frac{n}{n-k}}/n!}
\end{equation*}
\end{defn}

The mobility function shares many of the important properties of the volume function for divisors.  %In particular $\mob$ is a homogeneous function, so the definition extends naturally to every element of $N_{k}(X)_{\mathbb{Q}} := N_{k}(X)_{\mathbb{Z}} \otimes \mathbb{Q}$.
Our first theorem shows that bigness is characterized by positive mobility, confirming \cite[Conjecture 6.5]{delv11}.

\begin{thrm} \label{maintheorem}
Let $X$ be an integral projective variety.  Then $\mob$ extends uniquely from $N_{k}(X)_{\mathbb{Z}}$ to a continuous homogeneous function on $N_{k}(X)$.  In particular, $\alpha \in N_{k}(X)$ is big if and only if $\mob(\alpha)>0$.
\end{thrm}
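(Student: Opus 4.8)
The strategy is to bootstrap everything from two elementary facts about $\mc$. First, \emph{superadditivity}: if $\beta,\gamma$ are classes of effective cycles then $\mc(\beta+\gamma)\ge\mc(\beta)+\mc(\gamma)$, since given $\mc(\beta)+\mc(\gamma)$ general points we may split them into two groups and take the union of a mobile cycle of class $\beta$ through the first group with one of class $\gamma$ through the second; in particular $\mc(\beta+\gamma)\ge\mc(\beta)$ whenever $\gamma$ is effective (\emph{monotonicity}). Second, $\mc(m\alpha)=O(m^{n/(n-k)})$, which is essentially the content behind Example~\ref{ciexample}: the Chow variety of effective $k$-cycles of bounded $A$-degree $D$ has dimension $O(D^{n/(n-k)})$, and imposing passage through a general point drops the dimension of the relevant parameter space by $n-k$. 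Consequently $\mob$ is finite on $N_k(X)_{\mathbb Z}$.

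Write $d=\tfrac{n}{n-k}$. Homogeneity $\mob(a\alpha)=a^{d}\mob(\alpha)$ for $a\in\mathbb Z_{>0}$ follows from monotonicity together with standard $\limsup$ manipulations: the inequality ``$\le$'' is automatic because the defining $\limsup$ for $\mob(a\alpha)$ runs over the subsequence $\{am\}\subseteq\{m\}$, and ``$\ge$'' comes from comparing $\mc(m\alpha)$, via monotonicity, with $\mc$ at nearby multiples of $a$ (if no multiple of $\alpha$ is effective, all terms vanish). The same manipulations promote monotonicity of $\mc$ to monotonicity of $\mob$ on the big cone with respect to the partial order $\preceq$ defined by $\Eff_k(X)$, and homogeneity then lets us extend $\mob$ to $N_k(X)_{\mathbb Q}$ by $\mob(\tfrac{p}{q}\gamma):=(\tfrac{p}{q})^{d}\mob(\gamma)$.

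For continuity on the open big cone we sandwich: given a big rational class $\alpha$ and any rational direction $\delta$, the classes $\alpha\pm s\delta$ are big for all small $s>0$, so for $0<t<s$ one has $(1-t/s)\alpha\preceq\alpha+t\delta\preceq(1+t/s)\alpha$, whence
\[
(1-t/s)^{d}\,\mob(\alpha)\ \le\ \mob(\alpha+t\delta)\ \le\ (1+t/s)^{d}\,\mob(\alpha)
\]
by monotonicity and homogeneity; thus $\mob$ is uniformly continuous on rational big classes and extends uniquely and continuously to the big cone. One direction of the equivalence with bigness now follows: if $\alpha$ is big, choose a very ample $A$ and $\epsilon>0$ with $\alpha-\epsilon\,(A^{n-k}\cap[X])$ big; for suitable large $m$ with $t:=(m\epsilon)^{1/(n-k)}\in\mathbb Z$ the class $m\alpha-(tA)^{n-k}\cap[X]$ is represented by an effective cycle, so by monotonicity $\mc(m\alpha)\ge\mc((tA)^{n-k}\cap[X])$, and intersecting $n-k$ general members of $|tA|$ passing through $h^{0}(X,tA)-O(1)$ general points realizes those points on an effective $k$-cycle of class $(tA)^{n-k}\cap[X]$ (a Bertini argument: general members are smooth with general tangent hyperplanes at a general base point). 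Since $h^{0}(X,tA)\sim\vol(A)\,t^{n}/n!=\vol(A)\,(m\epsilon)^{d}/n!$, this forces $\mob(\alpha)\ge\vol(A)\,\epsilon^{d}>0$.

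The main obstacle is the reverse implication: $\mob(\alpha)=0$ whenever $\alpha$ is pseudo-effective but \emph{not} big, with enough uniformity to yield continuity across $\partial\Eff_k(X)$ (outside $\Eff_k(X)$ we have $\mc(m\alpha)=0$, so the only issue is the boundary). Here one fixes a supporting functional $\mu\ge0$ on $\Eff_k(X)$ with $\mu(\alpha)=0$; every effective cycle of class $m\alpha$ is then a nonnegative combination of subvarieties $V$ lying in the face $\mu^{\perp}$, of total $A$-degree $m(\alpha\cdot A^{n-k})$, and one must show that such face subvarieties are too degenerate to pass bounded-degree effective combinations through more than $o(m^{d})$ general points. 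The mechanism is that if a general effective $(n-k)$-cycle $W$ realizing a positive scaling of $\mu$ meets such a $V$ properly then $[V\cdot W]=0$ in $N_0(X)=\mathbb Z$ forces $V\cap W=\varnothing$, so $V$ avoids a covering family and hence sits in a proper closed subset, while non-proper intersection is itself a closed condition cutting down the family; either way the ``point-incidence dimension'' of the family of effective cycles of class $m\alpha$ is $o(m^{d})$, so $\mc(m\alpha)=o(m^{d})$ and $\mob(\alpha)=0$. Carrying this out carefully — in particular dealing with supporting functionals not directly represented by covering families, and controlling the estimates as $\alpha$ approaches the boundary — is the technical heart of the proof. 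Granting it, $\mob$ is a continuous homogeneous function on $N_k(X)$ vanishing exactly off the interior of $\Eff_k(X)$, and it is the unique continuous extension of its values on $N_k(X)_{\mathbb Z}$ since $N_k(X)_{\mathbb Z}\otimes\mathbb Q$ is dense.
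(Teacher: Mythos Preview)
Your treatment of homogeneity, superadditivity, continuity on the big cone, and the implication ``big $\Rightarrow \mob>0$'' matches the paper's approach (Lemmas~\ref{rescalingmob}, \ref{mobadditive}, Corollary~\ref{bigposcor}, Theorem~\ref{mobcontbigcone}). The upper bound $\mc(m\alpha)=O(m^{n/(n-k)})$ is also correct, though your justification via Chow-variety dimensions is not what Example~\ref{ciexample} says; the paper obtains it by embedding cycles in divisors (Proposition~\ref{mcbound}).

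The genuine gap is in the reverse implication and the boundary continuity. Your mechanism requires realizing a supporting functional $\mu\in\Nef^{k}(X)$ as the class of a covering family of effective $(n-k)$-cycles. But $\Nef^{k}(X)$ is defined purely as the dual cone to $\Eff_{k}(X)$; for $1<k<n-1$ there is no reason its elements are even numerically effective in the cycle sense, let alone represented by covering families, and already for $k=1$ a boundary nef divisor need not have any effective representative. You acknowledge this (``dealing with supporting functionals not directly represented by covering families''), but the obstruction is not a matter of care: the geometric picture you sketch simply does not exist in general, so the argument as written does not go through. Nor does your approach supply the \emph{uniform} estimate near the boundary needed for continuity across $\partial\Eff_{k}(X)$.

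The paper proceeds entirely differently. It never uses a dual cycle to $\alpha$; instead it slices by a very ample hypersurface $H$ and inducts on dimension. The key is Theorem~\ref{mobprecisebound}: writing $\alpha\cdot A^{k}<sA^{n}$, one shows that restricting a family of class $\alpha$ to $H$ breaks the mobility count into a $(k-1)$-cycle contribution on $H$ (handled by part~(1) inductively) plus contributions from $k$-cycles that specialize into $H$. The crucial observation is that if $\alpha-[A]^{n-k}$ is not pseudo-effective---automatic for boundary classes---then these specialized $k$-cycles on $H$ still satisfy the same non-bigness hypothesis relative to $A|_{H}$, so induction on codimension applies and yields a strictly smaller exponent $\tfrac{n}{n-k}-\epsilon_{n,k}$ (Corollary~\ref{iitakacor}). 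For continuity at the boundary one needs the more refined part~(3), tracking a parameter $t$ with $\alpha-t[A]^{n-k}$ not pseudo-effective; this gives $\mc(m\alpha)\le C\,m^{n/(n-k)}\,(t/s)^{\tau_{n,k}}$, which goes to zero uniformly as $\alpha$ approaches the boundary (Theorem~\ref{mobcontinuous}).
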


%\begin{rmk}
%Theorem \ref{maintheorem} has analogues in the setting of other equivalence relations on cycles.  The main step in the proof of Theorem \ref{maintheorem} is to show that if $\mob(\alpha)>0$ then $\alpha$ is big; the proof does not use any special feature of $N_{k}(X)$ besides the ability to intersect against Cartier divisors.  To prove the converse implication, one needs to work with an equivalence relation whose classes form a finitely generated group.

%For example, suppose $X$ is an integral projective variety over $\mathbb{C}$.  The statement of Theorem \ref{maintheorem} holds for the subspace $N'_{k}(X) \subset H_{2k}(X,\mathbb{R})$ spanned by classes of cycles and for the homological analogue of the mobility function.
%\end{rmk}

The following examples illustrate how the mobility captures basic geometric information about a class.  They also show that the mobility is difficult to compute; however, see Question \ref{mainconj} for a conjectural intersection-theoretic description.

\begin{exmple}
If $X$ is a smooth projective variety and $L$ is a Cartier divisor then $\mob([L]) = \vol(L)$ as shown in Example \ref{divisormob}.  However, the definition of mobility makes sense for a numerical Weil divisor class on any integral projective variety.  \cite{fl14} verifies that if $X$ admits a resolution of singularities $\phi: X' \to X$, then the mobility of a divisor class $\alpha$ is
\begin{equation*}
\mob(\alpha) = \sup_{\beta \in N_{k}(X'), \, \phi_{*}\beta = \alpha} \vol(\beta)
\end{equation*}
\end{exmple}

\begin{exmple}
Let $\ell$ denote the class of a line on $\mathbb{P}^{3}$.  The mobility of $\ell$ is determined by an enumerative question: what is the minimal degree of a curve in $\mathbb{P}^{3}$ going through $b$ general points?

It turns out that the answer to this question is not known (even asymptotically as the degree increases).  \cite{perrin87} conjectures that the ``optimal'' curves are complete intersections of two divisors of equal degree, which would imply that $\mob(\ell)=1$.  We return to this interesting question in Section \ref{p3mobility}.
\end{exmple}

\begin{exmple}
We define the rational mobility of a class $\alpha \in N_{k}(X)_{\mathbb{Z}}$ in a similar way by counting the number of general points lying on cycles in a fixed rational equivalence class inside of $\alpha$ (see Definition \ref{mobdefn}).  Rational mobility is interesting even for $0$-cycles.

Let $A_{0}(X)$ denote the set of rational equivalence classes of $0$-cycles on $X$.  Recall that $A_{0}(X)$ is said to be representable if the addition map $X^{(r)} \to A_{0}(X)_{\deg(r)}$ is surjective for some $r>0$.  In Section \ref{ratmob0cycles} we show for normal varieties over $\mathbb{C}$ that $A_{0}(X)$ is representable if and only if the rational mobility of the class of a point is the maximal possible value $(\dim X)!$.
\end{exmple}

Our next generalization of the volume function comes from intersection theory.  In addition to the volume function for divisors, \cite{xiao15} has recently defined an interesting intersection-theoretic volume function for curves.  The two definitions can be unified in the following way.  For $\alpha, \beta \in N_{k}(X)$, we write $\alpha \preceq \beta$ if $\beta - \alpha \in \Eff_{k}(X)$.   %Recall that for an ample divisor $A$ we have $\vol(A) = A^{n}$; more generally, the volume of a big divisor can be computed as an intersection of an ample divisor on a birational model via a Fujita approximation.  Thus the volume of a divisor is in a sense an ``intersection theoretic'' quantity.

%Recently \cite{xiao15} has defined an interesting positivity function for curves: for an integral projective variety $X$ of dimension $n$ and for $\alpha \in \Eff_{1}(X)$ set
%\begin{equation*}
%\widehat{\vol}(\alpha) = \inf_{A \textrm{ big and nef }\mathbb{R}\textrm{-divisor}} \left( \frac{A \cdot \alpha}{\vol(A)^{1/n}} \right)^{n/n-1}.
%\end{equation*}
%The function $\widehat{\vol}$ satisfies all the expected properties of a volume function and gives a rich theory for curves parallel to the divisor case.  Reinterpreting this function as in \cite{lx15}, we can generalize it to arbitrary cycle classes as follows:

\begin{defn}
Let $X$ be an integral projective variety of dimension $n$ and suppose $\alpha \in N_{k}(X)$ for $0 \leq k < n$.  Define
\begin{equation*}
\widehat{\vol}(\alpha) := \sup_{\phi, A} \{ A^{n} \}
\end{equation*}
as $\phi: Y \to X$ varies over all birational models of $X$ and as $A$ varies over all big and nef $\mathbb{R}$-Cartier divisors on $Y$ such that $\phi_{*}[A^{n-k}] \preceq \alpha$.
\end{defn}

When $\alpha$ is not in the interior of the pseudo-effective cone, the set of suitable divisors $A$ is empty and this expression should be interpreted as returning $0$.

\begin{exmple}
When $X$ is smooth and $L$ is a Cartier divisor, the theory of Fujita approximations (extended by \cite{takagi07} to arbitrary characteristic) shows that $\widehat{\vol}([L]) = \vol(L)$.
\end{exmple}

\begin{exmple}
Suppose that $B$ is a big and nef $\mathbb{R}$-Cartier divisor and set $\alpha = [B^{n-k}]$.  Then Example \ref{ktexample} shows that $\widehat{\vol}(\alpha) = \vol(B)$.  Many more examples for curve classes are computed in \cite{lx15}.
\end{exmple}

$\widehat{\vol}$ satisfies the most basic properties of a volume-type function.

\begin{thrm} \label{widehatvolthrm}
Let $X$ be an integral projective variety.  Then $\widehat{\vol}$ is a continuous homogeneous function on $N_{k}(X)$.  In particular, $\alpha \in N_{k}(X)$ is big if and only if $\widehat{\vol}(\alpha)>0$.
\end{thrm}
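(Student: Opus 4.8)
The plan is to establish three properties of $\widehat{\vol}$ in sequence: homogeneity of degree $\frac{n}{n-k}$, monotonicity with respect to $\preceq$, and continuity on $N_k(X)$. Homogeneity is essentially formal: given $\alpha$ and a scalar $\lambda > 0$, the divisors $A$ competing in the supremum for $\lambda\alpha$ are (after rescaling $A$ by $\lambda^{1/(n-k)}$, which is harmless since we allow $\mathbb{R}$-Cartier divisors) in bijection with those competing for $\alpha$, and $A^n$ scales by $\lambda^{n/(n-k)}$. So $\widehat{\vol}(\lambda\alpha) = \lambda^{n/(n-k)}\widehat{\vol}(\alpha)$, and in particular $\widehat{\vol}$ vanishes on the boundary ray through $0$. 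Monotonicity is also immediate: if $\alpha \preceq \beta$ then any $A$ with $\phi_*[A^{n-k}] \preceq \alpha$ also satisfies $\phi_*[A^{n-k}] \preceq \beta$, so $\widehat{\vol}(\alpha) \leq \widehat{\vol}(\beta)$.

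The real content is continuity, and the key case is continuity at big classes; together with homogeneity this forces $\widehat{\vol}$ to vanish on the pseudo-effective boundary and (by monotonicity) everywhere outside the pseudo-effective cone, giving the ``big iff positive'' statement. So fix a big class $\alpha_0$ in the interior of $\Eff_k(X)$. First I would show $\widehat{\vol}(\alpha_0) > 0$: since $\alpha_0$ is big we can find an ample $\mathbb{R}$-divisor $A$ on $X$ itself with $[A^{n-k}] \preceq \alpha_0$ (scale a fixed ample class down until the difference lands in the interior of the effective cone — this uses that $[A^{n-k}]$ depends continuously on $A$ and that small ample classes are close to $0$), and then $\widehat{\vol}(\alpha_0) \geq A^n > 0$. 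For upper semicontinuity, I would bound $\widehat{\vol}$ above near $\alpha_0$: choose a norm on $N_k(X)$ and an ample class $h$; for $\alpha$ with $\|\alpha - \alpha_0\|$ small we have $\alpha \preceq \alpha_0 + \epsilon \cdot (\text{ample power class})$, and one needs a uniform bound on $A^n$ over all models $\phi \colon Y \to X$ and all big nef $A$ on $Y$ with $\phi_*[A^{n-k}]$ bounded above by a fixed big class. This is exactly the kind of uniform estimate that makes volume-type functions subtle: I would get it by intersecting against a fixed ample divisor $H$ on $X$, using the projection formula $\phi^* H \cdot A^{n-1} \geq$ (something controlling $A^n$ up to the Khovanskii–Teissier inequalities) and the fact that $\phi_*[A^{n-1}]\cdot H^{\,?}$ or rather $A^{n-k}\cdot (\phi^*H)^{k}$ is controlled by the bounded class $\phi_*[A^{n-k}]\cdot H^k$. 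Then log-concavity of intersection numbers of nef classes (Khovanskii–Teissier) propagates the bound on $A^{n-k}\cdot(\phi^*H)^k$ to a bound on $A^n$.

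For lower semicontinuity at $\alpha_0$: given $\epsilon > 0$, pick a competitor $(\phi\colon Y\to X, A)$ with $A^n > \widehat{\vol}(\alpha_0) - \epsilon$ and $\phi_*[A^{n-k}] \preceq \alpha_0$. For $\alpha$ near $\alpha_0$ I would like to perturb $A$ to $A' = (1-\delta)A + \delta E$ or simply shrink $A$ slightly so that $\phi_*[A'^{n-k}] \preceq \alpha$; since $\alpha_0 - \phi_*[A^{n-k}] \in \Eff_k(X)$ and $\alpha$ is within distance $O(\delta)$ of $\alpha_0$ while $\phi_*[A^{n-k}] - \phi_*[(1-\delta)A^{n-k}] = O(\delta) \cdot \phi_*[A^{n-k}]$, one can absorb the discrepancy into the effective difference provided $\alpha_0 - \phi_*[A^{n-k}]$ has a definite amount of ``room'' — which it need not, if the competitor is nearly optimal and $\phi_*[A^{n-k}]$ sits on the boundary direction toward $\alpha_0$. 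The standard fix, which I expect to be \textbf{the main obstacle}, is to first move $\alpha_0$ inward: write $\alpha_0 = \alpha_0' + (\text{small big class})$, apply the argument to $\alpha_0'$ whose optimal-ish competitors leave room, and use homogeneity/monotonicity plus the already-established upper bound to control the error $\widehat{\vol}(\alpha_0) - \widehat{\vol}(\alpha_0')$ as the perturbation shrinks. Making this quantitative — showing $\widehat{\vol}$ does not jump when passing from $\alpha_0'$ to $\alpha_0$ — is where the uniform intersection-theoretic estimates from the previous paragraph get reused. Once upper and lower semicontinuity hold at every big class, continuity on all of $N_k(X)$ follows since $\widehat{\vol} \equiv 0$ off the big cone and is squeezed to $0$ approaching the boundary by the upper bound.
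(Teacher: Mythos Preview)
Your interior-continuity sketch is broadly right, though more laborious than necessary: homogeneity, positivity on big classes, and monotonicity under $\preceq$ already force local uniform continuity on the big cone via the elementary Lemma~\ref{easyconelem}, so the separate Khovanskii--Teissier upper bound and the perturbation argument for lower semicontinuity are not needed there.

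The genuine gap is at the boundary. Your final sentence asserts that $\widehat{\vol}$ is ``squeezed to $0$ approaching the boundary by the upper bound,'' but the only explicit upper bound you have produced is the Khovanskii--Teissier estimate
\[
\widehat{\vol}(\alpha) \;\leq\; \left( \frac{\alpha \cdot H^{k}}{(H^{n})^{k/n}} \right)^{n/(n-k)}
\]
for a fixed ample $H$, and this quantity does \emph{not} tend to $0$ as $\alpha$ approaches a generic point of $\partial\Eff_{k}(X)$: boundary classes typically have strictly positive degree against $H^{k}$. Your alternative route through upper semicontinuity is circular at boundary points: you bound $\widehat{\vol}(\alpha) \leq \widehat{\vol}(\alpha_{0} + \epsilon\beta)$ for a big $\beta$, but showing $\widehat{\vol}(\alpha_{0} + \epsilon\beta) \to 0$ as $\epsilon \to 0^{+}$ is exactly the boundary continuity you are trying to establish, and interior continuity alone says nothing about this one-sided limit.

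The paper handles this by an entirely different mechanism: it proves the inequality $\widehat{\vol}(\alpha) \leq \mob(\alpha)$ (Theorem~\ref{volmobineq}, using $\mob(H^{n-k}) \geq \vol(H)$ and that mobility increases under pushforward) and then invokes the continuity of $\mob$ on all of $N_{k}(X)$ (Theorem~\ref{mobcontinuous}). The latter is the real work --- it rests on the explicit degree estimates of Theorem~\ref{mobprecisebound}, which give a bound on $\mc(m\alpha)$ with exponent strictly below $n/(n-k)$ when $\alpha$ is close to the boundary. There is no purely intersection-theoretic shortcut here; the boundary behavior of $\widehat{\vol}$ is inherited from the mobility.
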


%\begin{rmk}
%A well-known open problem is to give a geometric interpretation of the nef cone (i.e.~the formal dual of the pseudo-effective cone).  Some subtleties of this problem are illustrated by \cite{delv11}.  Theorem \ref{widehatvolthrm} can be interpreted as stating that this question becomes easier from a birational perspective: if one replaces a boundary class $\alpha$ by the set $\mathcal{A}$ of all pseudo-effective classes on birational models that push forward to $\alpha$, then there are complete intersection classes with arbitrarily small intersection against elements of $\mathcal{A}$.
%\end{rmk}

While this function is easier to compute than the mobility, it is unclear how it relates to the geometry of the  cycles of class proportional to $\alpha$.  This interpretation is provided by our main question.

\begin{ques} \label{mainconj}
Let $X$ be an integral projective variety of dimension $n$ and suppose $\alpha \in \Eff_{k}(X)$ for some $0 < k < n$.  Then is
\begin{equation*}
\widehat{\vol}(\alpha) = \mob(\alpha)?
\end{equation*}
\end{ques}

Theorem \ref{volmobineq} proves the inequality $\leq$.  Question \ref{mainconj} is known for divisor classes (even in the singular case), and \cite{lx15} shows that for curves it suffices to prove the special case when $\alpha = A^{n-1}$ for an ample divisor $A$.  For classes of intermediate codimension, one may need to broaden the definition of $\widehat{\vol}$ from ample divisors to other kinds of positive classes to obtain an equality.

\bigskip

Our final generalization of the volume function extrapolates between the two previous ones: in some examples it can be computed using intersection theory but it retains the flavor of the mobility.  The key idea is that singular points of  cycles should contribute more to the mobility count.  This convention better reflects the intersection theory on the blow-up of the points, as the strict transform of a cycle which is singular at a point will be have larger intersection against the exceptional divisor than the strict transform of a smooth cycle.  %As we will see, the resulting quantity can be computed using the geometry of the blow-up.

Following a suggestion of R.~Lazarsfeld, we define the weighted mobility count of a class $\alpha \in N_{k}(X)_{\mathbb{Z}}$ as:
\begin{equation*}
\wmc(\alpha) =\max \left\{ b \in \mathbb{Z}_{\geq 0} \, \left| \, \begin{array}{c} \textrm{there is a } \mu \in \mathbb{Z}_{> 0} \textrm{ and an effective cycle of }  \\
\textrm{ class } \mu \alpha \textrm{ through any }b\textrm{ points of } X \textrm{ with}  \\ \textrm{multiplicity at least }\mu \textrm{ at each point}  \end{array} \right. \right\}.
\end{equation*}

This definition has the effect of counting singular points with a higher ``weight''.  It is designed to be compatible with the calculation of multipoint Seshadri constants -- see Section \ref{weightedmobilitysection} for details.  Just as with our other constants, the expected growth rate on a variety of dimension $n$ is $\wmc(m\alpha) \sim Cm^{n/n-k}$, suggesting the following definition.

\begin{defn}
Let $X$ be an integral projective variety of dimension $n$ and suppose $\alpha \in N_{k}(X)_{\mathbb{Z}}$ for $0 \leq k < n$.  The weighted mobility of $\alpha$ is
\begin{equation*}
\wmob(\alpha) = \limsup_{m \to \infty} \frac{ \wmc(m\alpha) }{m^{\frac{n}{n-k}}}
\end{equation*}
\end{defn}

The rescaling factor $n!$ is now omitted to ensure that the hyperplane class on $\mathbb{P}^{n}$ has weighted mobility $1$.

\begin{thrm}
Let $X$ be an integral projective variety.  Then $\wmob$ is a continuous homogeneous function on $N_{k}(X)$.  In particular, $\alpha \in N_{k}(X)$ is big if and only if $\wmob(\alpha)>0$.
\end{thrm}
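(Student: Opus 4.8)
The plan is to mirror the strategy that already worked for $\mob$ in Theorem~\ref{maintheorem} and for $\widehat{\vol}$ in Theorem~\ref{widehatvolthrm}, adapting each step to the weighted count. The three things to establish are: (i) homogeneity, i.e. $\wmob(r\alpha) = r^{n/(n-k)}\wmob(\alpha)$ for positive rationals $r$ and $\alpha \in N_k(X)_{\mathbb{Z}}$; (ii) that $\wmob$ is locally bounded and in fact uniformly continuous on rational classes, so that it extends to a continuous function on all of $N_k(X)$; and (iii) the bigness dichotomy, $\wmob(\alpha) > 0 \iff \alpha \in \operatorname{int}\Eff_k(X)$. Throughout, the basic subadditivity-type estimates on $\wmc$ — that a weighted cycle through $b$ points of class $\mu\alpha$ and one through $b'$ points of class $\mu'\alpha'$ combine (after clearing to a common multiple) to one through $b+b'$ points of class a multiple of $\alpha+\alpha'$, and that passing to $m\alpha$ multiplies the achievable point count by roughly $m^{n/(n-k)}$ — are what make the $\limsup$ behave well; these are the weighted analogues of the counting lemmas used for $\mc$.

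For homogeneity, the key observation is the scaling comparison $\wmc(m(r\alpha)) $ versus $\wmc((mr)\alpha)$ together with the fact that the $\mu$ appearing in the definition already allows us to absorb denominators: if $r = p/q$, then a weighted cycle of class $\mu\alpha$ through $b$ points with multiplicity $\ge\mu$ is the same datum as a weighted cycle of class $(q\mu)(r\alpha)/p$... more precisely one shows $\wmc(pq\cdot m\alpha)$ controls $\wmc(m\cdot p(r\alpha))$ from both sides up to the expected power of the integer factors, and then the $\limsup$ defining $\wmob$ transforms by exactly $r^{n/(n-k)}$. I would isolate a clean homogeneity lemma for the integer rescaling $\wmc(cm\alpha)$ first (showing $\wmob(c\alpha) = c^{n/(n-k)}\wmob(\alpha)$ for $c \in \mathbb{Z}_{>0}$ via the combining estimates in both directions), and then deduce the rational case formally.

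For continuity and the extension to $N_k(X)$, the workhorse is a uniform local bound: on a bounded region of $N_k(X)$, $\wmc(m\alpha)$ grows at most like $Cm^{n/(n-k)}$ with $C$ uniform, which follows by writing any class in the region in terms of a fixed spanning set of effective classes and using the combining estimate, exactly as for $\mob$. Given the uniform bound, homogeneity plus the triangle-type inequality $\wmc(\mu\alpha + \mu'\alpha') \ge$ (a shifted sum) upgrades to the statement that $\wmob^{(n-k)/n}$ is, up to constants, sandwiched by norms and satisfies a concavity/Lipschitz estimate on rational classes — yielding uniform continuity on rational points and hence a unique continuous extension. I would present this in parallel with the $\mob$ argument, pointing out that the only new feature is keeping track of the auxiliary multiplier $\mu$, which never obstructs the estimates because one may always pass to a common multiple.

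The bigness dichotomy then splits as usual. The easy direction — $\alpha$ big $\Rightarrow \wmob(\alpha) > 0$ — follows since $\wmc(\alpha) \ge \mc(\alpha)$ (taking $\mu = 1$), so $\wmob(\alpha) \ge$ a positive multiple of $\mob(\alpha) > 0$ by Theorem~\ref{maintheorem}. The converse, $\wmob(\alpha) > 0 \Rightarrow \alpha$ big, is the main obstacle: one must show that a class on the boundary of $\Eff_k(X)$ cannot support weighted cycles through many general points. Here I expect the weighting to make things genuinely subtler than in the unweighted case, because a highly singular cycle can have small class yet large multiplicity. The approach I would take is to separate a supporting hyperplane $N$ of $\Eff_k(X)$ at $\alpha$, so $N \cdot \beta \ge 0$ for all effective $\beta$ and $N\cdot\alpha = 0$; intersecting a weighted cycle $Z$ of class $\mu\alpha$ through points $x_1,\dots,x_b$ with multiplicity $\ge\mu$ against the strict transform picture on a blow-up at the $x_i$, one gets $0 = \mu(N\cdot\alpha) \ge \sum_i (\text{positive contribution of multiplicity }\mu\text{ at }x_i)$ once $N$ is represented by an appropriate positive class (a suitably chosen complete-intersection-type class, or a nef class pulled back and corrected by exceptional divisors), forcing $b$ to be bounded independent of $\mu$ and hence $\wmob(\alpha) = 0$. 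Making the intersection-theoretic inequality rigorous — in particular producing a class representing the functional $N$ on a birational model whose intersection with a multiplicity-$\mu$ point is bounded below by a positive multiple of $\mu^{n/(n-k)}$ summed over the points — is where the real work lies, and I would draw on the same machinery (moving lemmas, Siu-type inequalities, and the behavior of $\Eff_k$ under pullback) used to prove the boundary estimate for $\mob$.
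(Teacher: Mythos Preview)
Your treatment of homogeneity, subadditivity, continuity on the interior of $\Eff_k(X)$, and the implication ``big $\Rightarrow \wmob>0$'' is fine and matches the paper's approach (Lemmas~\ref{rescalingwmob}, \ref{wmobadditive}, and the corollaries thereafter, all via Lemma~\ref{easyconelem}). The paper even uses a slightly different argument for the easy direction---complete intersections directly rather than the comparison $\wmc\geq\mc$---but yours works just as well.

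The genuine gap is in the hard direction, and it is twofold. First, a structural point: showing $\wmob(\alpha)=0$ for each boundary class $\alpha$ individually does not by itself give continuity at the boundary; you still need that $\wmob(\beta)\to 0$ as $\beta$ approaches $\partial\Eff_k(X)$ from the interior. Your part~(ii) only yields continuity on the interior (this is exactly what Lemma~\ref{easyconelem} gives), so something further is needed. Second, and more seriously, the mechanism you propose for the boundary---a supporting nef functional $N\in\Nef^k(X)$ with $N\cdot\alpha=0$, pulled back to a blow-up and corrected by exceptional classes so that multiplicity-$\mu$ points contribute positively---does not seem workable in intermediate codimension. Elements of $\Nef^k(X)$ are abstract linear functionals with no a~priori geometric representative, no pullback to birational models, and no Seshadri-type positivity on blow-ups; the only functionals for which such a picture is available are those of the form $A^k$ for $A$ ample, and those are strictly positive on every nonzero pseudo-effective class, so they never witness the boundary.

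The paper handles this completely differently: it proves a quantitative upper bound on $\wmc$ (Theorem~\ref{wmobprecisebound}) that depends on a parameter $t$ measuring how far $\alpha$ is from dominating $t[A]^{n-k}$, and this bound tends to $0$ uniformly as the class approaches the boundary. The proof is an induction on dimension via restriction to a general hypersurface $H$, exactly parallel to Theorem~\ref{mobprecisebound} for $\mob$. The one new wrinkle is that when a member of the family meets $H$, a point of multiplicity $\mu$ may split its contribution between the components contained in $H$ and the genuine intersection cycle; one only knows that at least one of the two contributes multiplicity $\geq\mu/2$, which introduces a harmless factor of $2^n$ into the constants. This is the step your outline is missing, and it is what actually drives both continuity at the boundary (Theorem~\ref{wmobcontinuous}) and the implication $\wmob>0\Rightarrow$ big.
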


\begin{exmple}
Suppose that $X$ is smooth over an uncountable algebraically closed field and that $L$ is a Cartier divisor on $X$.  Then Example \ref{wmobdivisor} shows that $\wmob([L]) = \vol(L)$.
\end{exmple}

\begin{exmple}
Suppose that $X$ is an integral projective variety over an uncountable algebraically closed field and that $\alpha = H^{n-k}$ where $H$ is a big and nef $\mathbb{R}$-divisor.  Then Example \ref{wmobci} shows that $\wmob(\alpha) = \vol(H)$.
\end{exmple}

\cite{lx15} shows that there is an equality $\wmob(\alpha) = \widehat{\vol}(\alpha)$ for curve classes.

%\subsection{Organization}

%Section \ref{prelimsection} reviews background material on cycles.  Section \ref{cyclefamilysection} describes several geometric constructions for families of cycles.  
%Section \ref{mcsection} analyzes the geometric properties of the mobility count.
%Section \ref{mobilitysection} defines mobility and proves Theorem \ref{maintheorem}.  Section \ref{mobilityexamplesection} discusses some examples of the mobility.  Section \ref{inttheorysection} defines and analyzes $\widehat{\vol}$.  Finally, Section \ref{weightedmobilitysection} analyzes the weighted mobility function.

\subsection{Acknowledgements}

My sincere thanks to A.M.~Fulger for numerous discussions and for his suggestions.  I am grateful to R.~Lazarsfeld for numerous conversations and for suggesting the approach in Section \ref{weightedmobilitysection}.  Thanks to C.~Voisin for recommending a number of improvements on an earlier draft.  I am grateful to B.~Bhatt and B.~Hassett  for helpful conversations and to Z.~Zhu and X.~Zhao for reading a draft of the paper.

\section{Preliminaries} \label{prelimsection}

Throughout we work over a fixed algebraically closed field $K$.  A variety will mean a quasiprojective scheme of finite type over $K$ (which may be reducible and non-reduced).  We will often use the following special case of \cite[Th\'eor\`eme 5.2.2]{gr71}.

\begin{thrm}[\cite{gr71}, Th\'eor\`eme 5.2.2]
Let $f: X \to S$ be a projective morphism of varieties such that some component of $X$ dominates $S$.  There is a birational morphism $\pi: S' \to S$ such that the morphism $f': X' \to S'$ is flat, where $X' \subset X \times_{S} S'$ is the closed subscheme defined by the ideal of sections whose support does not dominate $S'$.
\end{thrm}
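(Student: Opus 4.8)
The plan is to realize $S'$ as the closure of a graph inside a relative Hilbert scheme, so that the universal flat family restricts to exactly the strict transform $X'$. First I would reduce to the case that $S$ is integral. Passing to $S_{\mathrm{red}}$ changes neither the notion of birational morphism nor the formation of the ideal of non-dominating sections, and a birational modification can be assembled separately over each irreducible component; so one may assume $S$ is integral with generic point $\eta$. Fix a closed embedding $X \hookrightarrow \mathbb{P}^N_S$ with induced relatively ample $\mathcal{O}(1)$, let $X_\eta$ be the union of those components of the generic fiber coming from components of $X$ that dominate $S$ (nonempty by hypothesis), and let $X^\circ \subseteq \mathbb{P}^N_S$ be the scheme-theoretic closure of $X_\eta$. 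By construction $X^\circ$ is the strict transform of $X$ for $\pi = \mathrm{id}_S$, and $X^\circ \to S$ is dominant.

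Next I would apply generic flatness to $X^\circ \to S$: there is a dense open $U \subseteq S$ over which $X_U := X^\circ|_U \subseteq \mathbb{P}^N_U$ is flat over $U$, and since $U$ is integral it has a single Hilbert polynomial $P$. Grothendieck's construction then yields a classifying morphism $g \colon U \to \Hilb^P(\mathbb{P}^N)$ with $g^*\mathcal{Z} = X_U$, where $\mathcal{Z} \subseteq \mathbb{P}^N \times \Hilb^P(\mathbb{P}^N)$ is the universal family. I would define $S'$ to be the closure, with its reduced structure, of the graph of $g$ inside $S \times \Hilb^P(\mathbb{P}^N)$, with $\pi \colon S' \to S$ the first projection. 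Because $\Hilb^P(\mathbb{P}^N)$ is projective over the base and $\pi$ restricts to an isomorphism over the dense open $U$, the morphism $\pi$ is projective and birational, and $S'$ is integral.

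Now set $\mathcal{Z}' := \mathcal{Z} \times_{\Hilb^P(\mathbb{P}^N)} S' \subseteq \mathbb{P}^N_{S'}$. Flatness is stable under base change, so $\mathcal{Z}'$ is flat over $S'$, and it agrees with $X_U$ over $U$; in particular its generic fiber is $X_\eta$ (here I use that $\pi$ induces an isomorphism of function fields). It remains to identify $\mathcal{Z}'$ with the strict transform $X'$. Since $S'$ is integral and $\mathcal{Z}'$ is flat over $S'$, every associated point of $\mathcal{O}_{\mathcal{Z}'}$ maps to an associated point of $\mathcal{O}_{S'}$, that is, to the generic point; hence $\mathcal{Z}'$ has no component or embedded point lying over a proper closed subset of $S'$, so $\mathcal{Z}'$ is the scheme-theoretic closure of its generic fiber $X_\eta$. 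On the other hand, the strict transform $X'$, obtained from $X \times_S S'$ by killing exactly the sections supported where the support fails to dominate $S'$, is by definition the scheme-theoretic closure of the generic fiber of $X \times_S S' \to S'$, which is again $X_\eta$. As the scheme-theoretic closure of $X_\eta$ in $\mathbb{P}^N_{S'}$ is unique, we conclude $X' = \mathcal{Z}'$, and therefore $f' \colon X' \to S'$ is flat.

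The main obstacle is precisely this final identification $X' = \mathcal{Z}'$: one must match the scheme structure of the Hilbert-scheme family against the ``remove non-dominating sections'' description of the strict transform. The crux is that flatness over an integral base forces all associated points of $\mathcal{Z}'$ to dominate $S'$, so that $\mathcal{Z}'$ is automatically the closure of its generic fiber with no spurious vertical or embedded part, while both $X'$ and $\mathcal{Z}'$ have generic fiber $X_\eta$ by birationality of $\pi$. The reduction to integral $S$ and the bookkeeping of Hilbert polynomials when $S$ is not irreducible require care but are routine under the Noetherian hypotheses, and existence of $\Hilb^P(\mathbb{P}^N)$ together with generic flatness supplies the remaining input.
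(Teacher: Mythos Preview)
The paper does not prove this statement; it is quoted as a special case of \cite[Th\'eor\`eme 5.2.2]{gr71} and used as a black box throughout. Your argument is a correct sketch of the standard Hilbert-scheme proof of flattening in the projective case: the graph closure in $S \times \Hilb^{P}(\mathbb{P}^{N})$ produces the birational modification, and the identification of the pulled-back universal family with the strict transform via ``both are the scheme-theoretic closure of $X_{\eta}$ in $\mathbb{P}^{N}_{S'}$'' is exactly the right mechanism. The step you single out as the crux --- that flatness over an integral base forces every associated point of $\mathcal{Z}'$ to lie over the generic point of $S'$, so $\mathcal{Z}'$ has no vertical or embedded junk and hence equals the closure of its generic fiber --- is indeed the heart of the matter, and your justification is sound. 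One small point worth making explicit: to conclude $\mathcal{Z}' \subset X \times_{S} S'$ (so that the two closures are being taken in the same ambient scheme) you use that $X \times_{S} S'$ is closed in $\mathbb{P}^{N}_{S'}$ and contains $X_{\eta}$; this is immediate but easy to overlook.

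For comparison, the Raynaud--Gruson argument in \cite{gr71} proceeds quite differently, via blow-ups along Fitting-type ideals and a d\'evissage, and applies in the much broader setting of finitely presented morphisms without any projectivity assumption. Your route is the cleaner one under the projective hypothesis, since the Hilbert scheme does all the work of organizing the flat limits.
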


\subsection{Cycles}
We will use the conventions of \cite{fulton84} concerning the theory of cycles and intersection products and refer to \cite{fl13} for the foundations of numerical groups.  Using the definition of numerical triviality of \cite[Chapter 19]{fulton84}, we let $N_{k}(X)_{\mathbb{Z}}$ denote the abelian group of numerical equivalence classes of $k$-cycles on $X$.  %By \cite[Example 19.1.4]{fulton84} $N_{k}(X)_{\mathbb{Z}}$ is a finitely generated free abelian group.
We also define
\begin{align*}
N_{k}(X)_{\mathbb{Q}} & := N_{k}(X)_{\mathbb{Z}} \otimes \mathbb{Q} \\
N_{k}(X) & := N_{k}(X)_{\mathbb{Z}} \otimes \mathbb{R}
\end{align*}
When $Z$ is a Cartier divisor on $X$, \cite[Example 19.2.3]{fulton84} indicates how to construct pullback maps $f^{*}: N_{k}(X) \to N_{k-1}(Z)$.   
 For a cycle $Z$ on $X$, we let $[Z]$ denote the numerical class of $Z$, which can be naturally thought of as an element in $N_{k}(X)_{\mathbb{Z}}$, $N_{k}(X)_{\mathbb{Q}}$, or $N_{k}(X)$.  If $\alpha$ is the class of an effective cycle $Z$, we say that $\alpha$ is an effective class.

%Thus $N_{k}(X)$ is a finitely generated $\mathbb{R}$-vector space and there are natural injections $N_{k}(X)_{\mathbb{Z}} \hookrightarrow N_{k}(X)_{\mathbb{Q}} \hookrightarrow N_{k}(X)$.  %When $X$ is smooth, there is an isomorphism between $N_{k}(X)$ and $N^{n-k}(X)$ defined using intersection products.
We denote the dual vector space of $N_{k}(X)$ by $N^{k}(X)$.  The intersections of Chern classes with rational equivalence classes descend to give maps $N^{r}(X) \times N_{k}(X) \to N_{k-r}(X)$. 

\begin{conv} \label{dimconv} When we discuss $k$-cycles on an integral projective variety $X$, we will always implicitly assume that $0 \leq k < \dim X$.  This allows us to focus on the interesting range of behaviors without repeating hypotheses.
\end{conv}

\begin{defn}
Let $X$ be a projective variety.  The pseudo-effective cone $\Eff_{k}(X) \subset N_{k}(X)$ is the closure of the cone generated by all classes of effective $k$-cycles.   $\Eff_{k}(X)$ is a full-dimensional salient cone by \cite[Theorem 0.2]{fl13}.  The big cone is the interior of the pseudo-effective cone.  The cone in $N^{k}(X)$ dual to the pseudo-effective cone is known as the nef cone and denoted $\Nef^{k}(X)$.

We say that $\alpha \in N_{k}(X)$ is pseudo-effective (resp.~big) if it lies in the pseudo-effective cone (resp.~big cone), and $\beta \in N^{k}(X)$ is nef if it lies in the nef cone.    For $\alpha,\alpha' \in N_{k}(X)$ we write $\alpha \preceq \alpha'$ when $\alpha' - \alpha$ is pseudo-effective.
\end{defn}

%For any morphism of projective varieties $f: X \to Y$, there is a pushforward map $f_{*}: N_{k}(X) \to N_{k}(Y)$.  It is clear that $f_{*}(\Eff_{k}(X)) \subset \Eff_{k}(Y)$.  There is also a formal dual $f^{*}: N^{k}(Y) \to N^{k}(X)$ that preserves nefness.

%\begin{conv}
%Let $B$ be a Cartier divisor on an equidimensional projective variety $X$ of dimension $n$.  We say that $B$ is big if for each reduced component $X_{i}$ of $X$ we have that $h^{0}(X_{i},mB|_{X_{i}}) > \lfloor Cm^{n} \rfloor$ for some positive constant $C$.  This implies that the corresponding Weil divisor has big class.  With this definition bigness of a Cartier divisor is preserved by generically-finite pullback.
%\end{conv}

The following lemma records a basic property of pseudo-effective cycles.

\begin{lem}[\cite{fl13}, Corollary 3.20] \label{surjpushforward}
Let $f: X \to Y$ be a surjective morphism of integral projective varieties.  Then $f_{*}\Eff_{k}(X) = \Eff_{k}(Y)$.\end{lem}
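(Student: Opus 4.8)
The plan is to prove the two inclusions separately: the inclusion $f_{*}\Eff_{k}(X) \subseteq \Eff_{k}(Y)$ is routine, while the reverse inclusion requires constructing pseudo-effective preimages together with a compactness argument to rule out boundary pathologies. For the easy inclusion, I would first recall that pushforward of cycles descends to a linear map $f_{*}: N_{k}(X) \to N_{k}(Y)$, which is automatically continuous since these are finite-dimensional real vector spaces. The pushforward of an effective $k$-cycle is again effective: for an irreducible $k$-dimensional $V \subseteq X$ one has $f_{*}[V] = \deg(V/f(V))[f(V)]$ when $\dim f(V) = k$ and $f_{*}[V]=0$ otherwise, so $f_{*}$ carries the cone generated by effective classes into the cone generated by effective classes on $Y$. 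Since $\Eff_{k}(X)$ is by definition the closure of the former cone, $f_{*}$ is continuous, and $\Eff_{k}(Y)$ is closed, this yields $f_{*}\Eff_{k}(X) \subseteq \Eff_{k}(Y)$.

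For the reverse inclusion the key geometric input is that every effective class on $Y$ is a pushforward. Given a $k$-dimensional subvariety $W \subseteq Y$, set $d = \dim X - \dim Y$ and choose $d$ general members $H_{1},\dots,H_{d}$ of a very ample linear system on $X$. Since $f^{-1}(W) \to W$ is surjective with general fibre of dimension $d$, a Bertini-type dimension count shows that $f^{-1}(W)\cap H_{1}\cap\dots\cap H_{d}$ has a component $V$ that dominates $W$, has dimension exactly $k$, and is generically finite over $W$; hence $f_{*}[V] = \deg(V/W)[W]$ with $\deg(V/W)\ge 1$. Thus $[W]\in f_{*}\Eff_{k}(X)$, and since $f_{*}\Eff_{k}(X)$ is a convex cone it contains every effective class, hence contains a dense subcone of $\Eff_{k}(Y)$.

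The main obstacle is to upgrade this to the exact equality $f_{*}\Eff_{k}(X)=\Eff_{k}(Y)$, that is, to show $f_{*}\Eff_{k}(X)$ is already closed rather than merely dense in $\Eff_{k}(Y)$. The image of a closed salient cone under a linear map need not be closed, and here $\ker f_{*}$ meets $\Eff_{k}(X)$ nontrivially (contracted cycles), so no formal convexity argument suffices. I would handle this by a boundedness argument. Fix very ample divisors $H_{X}$ on $X$ and $H_{Y}$ on $Y$. The class $H_{X}^{k}$ lies in the interior of $\Nef^{k}(X)$, since it pairs strictly positively with every nonzero effective class; consequently each slice $\{\eta\in\Eff_{k}(X): H_{X}^{k}\cdot\eta \le C\}$ is compact. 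The crucial estimate is that the preimages $V$ produced above can be chosen with $H_{X}$-degree bounded linearly in the $H_{Y}$-degree of $W$: on a flat model furnished by the flattening theorem (Th\'eor\`eme 5.2.2) one has $[f'^{-1}(W')] = (f')^{*}[W']$, so the projection formula gives $H^{k+d}\cdot (f')^{*}[W'] = c\,(H_{Y}^{k}\cdot[W'])$ for a fixed constant $c>0$, and pushing down to $X$ yields the desired uniform bound.

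Granting this estimate, the argument closes as follows. For any $\beta\in\Eff_{k}(Y)$ I would approximate $\beta$ by effective classes $\beta_{n}\to\beta$, lift each $\beta_{n}$ to an effective $\eta_{n}\in\Eff_{k}(X)$ with $f_{*}\eta_{n}=\beta_{n}$ and $H_{X}$-degree controlled by $\deg_{H_{Y}}(\beta_{n})$, and extract a convergent subsequence $\eta_{n}\to\eta_{\infty}\in\Eff_{k}(X)$ from the relevant compact slice; continuity of $f_{*}$ then gives $f_{*}\eta_{\infty}=\beta$, so $\beta\in f_{*}\Eff_{k}(X)$. The degree estimate, and hence the clean use of the flattening theorem to bound the preimage construction uniformly, is where the real work lies; everything else is formal.
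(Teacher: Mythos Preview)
The paper does not prove this lemma at all; it is simply cited from \cite{fl13}, so there is no in-paper argument to compare your proposal against. I will therefore evaluate your sketch on its own terms.

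Your architecture is sound: the inclusion $f_{*}\Eff_{k}(X)\subseteq\Eff_{k}(Y)$ is routine, you correctly observe that every effective class on $Y$ lifts to one on $X$, and you rightly isolate closedness of $f_{*}\Eff_{k}(X)$ as the real issue (the image of a salient closed cone under a linear map need not be closed, exactly because $\ker f_{*}$ meets $\Eff_{k}(X)$ nontrivially). There is a minor slip along the way: $f^{-1}(W)\to W$ need not have general fibre of dimension $d=\dim X-\dim Y$ when $W$ lies entirely in the fibre-jump locus of $f$, but this is easily repaired by cutting with more hyperplanes.

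The genuine gap is in your degree estimate. You assert that on the flat model the projection formula gives $H^{k+d}\cdot(f')^{*}[W']=c\,(H_{Y}^{k}\cdot[W])$ for a fixed constant $c$. This cannot hold: for $H$ ample on $X'$, the left-hand side is an \emph{interior} nef functional on $N_{k}(Y')$ (it is strictly positive on every nonzero pseudo-effective class, since flat pullback of an effective class is effective), whereas the right-hand side equals $(\pi^{*}H_{Y})^{k}\cdot[W']$, which vanishes on every $\pi$-exceptional class and so lies on the boundary of $\Nef^{k}(Y')$. They are not proportional unless $\pi$ is an isomorphism. Put differently, flattening does buy you something---the map $\alpha\mapsto \tfrac{1}{e}H^{d}\cdot(f')^{*}\alpha$ is a \emph{linear} section of $(f')_{*}$ carrying $\Eff_{k}(Y')$ into $\Eff_{k}(X')$, so $(f')_{*}\Eff_{k}(X')=\Eff_{k}(Y')$ follows immediately---but it only reduces the original problem to the birational map $\pi:Y'\to Y$, for which your degree bound is exactly as hard as before. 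Closing this requires a further argument (for instance an induction on $\dim Y$, using that the $\pi$-exceptional locus maps to a strictly lower-dimensional subvariety of $Y$); the one-line appeal to the projection formula does not do it.
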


%\begin{lem} \label{intlem}  Let $X$ be an integral projective variety.
%\begin{enumerate}
%\item If $A$ is a nef Cartier divisor then $\cdot A: N_{k}(X) \to N_{k-1}(X)$ takes $\Eff_{k}(X)$ into $\Eff_{k-1}(X)$.
%\item If $\alpha \in N_{k}(X)$ is a big class and $A$ is an ample Cartier divisor then $\alpha \cdot A \in N_{k-1}(X)$ is also big.
%\item Let $D$ be the support of an effective big $j$-cycle $Z$ with injection $i: D \to X$.  If $\alpha \in N_{k}(D)$ is big (for $0 \leq k \leq j$) then $i_{*}\alpha \in N_{k}(X)$ is big.
%\end{enumerate} 
%\end{lem}

%\begin{proof}
%By continuity and homogeneity it suffices to prove (1) when $A$ is very ample.  Let $Z$ be an integral $k$-dimensional subvariety; for sufficiently general elements $H \in |A|$, the cycle underlying $H|_{Z}$ is an effective cycle of class $[Z] \cdot A$, proving (1).  To see (2), write $\alpha = \alpha' + cA^{n-k}$ for some pseudo-effective class $\alpha'$ and some small $c>0$.  Applying (1), it suffices to note that $A^{n-k+1}$ is a big class by \cite[Theorem 0.2]{fl13}.  Similarly, to show (3) fix an ample Cartier divisor $A$ on $X$ and consider the class $\beta := (i^{*}A)^{j-k}$ in $N_{k}(D)$.   Choose $m$ large enough so that $m[D] \succeq [Z]$.  By the projection formula $i_{*}(m\beta) \succeq [A^{j-k} \cdot Z]$, so that $i_{*}\beta$ is big on $X$ by (2).  Writing $\alpha = \alpha' + c\beta$ for a sufficiently small $c$, the claim follows from the fact that $i_{*}$ preserves pseudo-effectiveness.
%\end{proof}

\subsection{Analytic lemmas}

We are interested in invariants constructed as asymptotic limits of functions on $N_{k}(X)_{\mathbb{Z}}$.  The following lemmas will allow us to conclude several important properties of these functions directly from some easily verified conditions.

\begin{lem}[\cite{lazarsfeld04} Lemma 2.2.38] \label{lazlemma}
Let $f: \mathbb{N} \to \mathbb{R}_{\geq 0}$ be a function.  Suppose that for any $r,s \in \mathbb{N}$ with $f(r) > 0$ we have $f(r+s) \geq f(s)$.
Then for any $k \in \mathbb{R}_{>0}$ the function $g: \mathbb{N} \to \mathbb{R} \cup \{ \infty \}$ defined by
\begin{equation*}
g(r) := \limsup_{m \to \infty} \frac{f(mr)}{m^{k}}
\end{equation*}
satisfies $g(cr) = c^{k}g(r)$ for any $c,r \in \mathbb{N}$.
\end{lem}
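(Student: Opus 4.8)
The plan is to reduce everything to the case $r=1$ by a simple rescaling trick and then invoke the classical fact that, for a function satisfying a sub/super-additivity condition, the $\limsup$ of $f(m)/m^k$ can be computed along any subsequence of the form $m = c\ell$. First I would observe that the hypothesis ``$f(r)>0 \implies f(r+s)\ge f(s)$'' immediately implies by induction the following useful monotonicity: if $f(r)>0$ then $f(nr + s) \ge f(s)$ for every $n\ge 0$ and every $s$, so in particular the sequence $m \mapsto f(mr)$ is eventually monotone nondecreasing (once one value is positive) along the arithmetic progression of multiples of $r$. If $f(mr)=0$ for all $m$, then $g(r)=0$ and $g(cr)=0$ as well, so the homogeneity $g(cr)=c^k g(r)$ holds trivially; hence we may assume $f(m_0 r)>0$ for some $m_0$.

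The key step is the chain of inequalities comparing the two $\limsup$s. Fix $c, r \in \mathbb{N}$. On one hand, the sequence $(m^k)_{m}$ along multiples of $c$ is a subsequence, so
\begin{equation*}
g(cr) = \limsup_{m\to\infty} \frac{f(mcr)}{m^k} = c^k \limsup_{m\to\infty} \frac{f((mc)r)}{(mc)^k} \le c^k \limsup_{n\to\infty} \frac{f(nr)}{n^k} = c^k g(r),
\end{equation*}
using only that $\{mc : m \in \mathbb{N}\} \subseteq \mathbb{N}$. For the reverse inequality I would use the eventual monotonicity: given any $n$ large, write $n = mc - j$ with $0 \le j < c$, so that $f(nr) \le f(mcr)$ once $n$ is past the threshold where positivity kicks in (here is where one uses that adding a positive-valued multiple of $r$ can only increase $f$); dividing by $n^k$ and noting $n^k / (mc)^k \to 1$ as $n \to \infty$ (with $c$ fixed), one gets $g(r) \le \limsup_m f(mcr)/(mc)^k = c^{-k} g(cr)$. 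Combining the two inequalities yields $g(cr) = c^k g(r)$.

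The main obstacle — really the only subtle point — is justifying the step $f(nr)\le f(mcr)$ with $n\le mc$: the hypothesis only gives $f(r'+s)\ge f(s)$ when $f(r')>0$, not unconditionally, so one must first establish that the values $f(mr)$ are eventually all positive (which follows from the induction above once $f(m_0 r)>0$) and then apply the bound only for indices beyond that point; since a $\limsup$ is insensitive to finitely many terms, this is harmless. I would also take care to handle the possibility $g(r)=\infty$ uniformly: both displayed inequalities above remain valid in $\mathbb{R}\cup\{\infty\}$ with the convention $c^k \cdot \infty = \infty$, so no separate argument is needed. Everything else is routine manipulation of $\limsup$ along subsequences.
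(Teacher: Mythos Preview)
Your overall strategy---the easy subsequence inequality $g(cr)\le c^k g(r)$, then a comparison $f(nr)\le f(mcr)$ with $m\sim n/c$ for the reverse---is the right one, and matches what underlies Lazarsfeld's argument (the paper itself gives no proof, only a citation and the remark that Lazarsfeld's volume computation carries this content).  However, there is a genuine gap in your justification of the reverse inequality.

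You assert that once $f(m_0 r)>0$ the values $f(mr)$ are eventually \emph{all} positive, and hence the sequence $m\mapsto f(mr)$ is eventually monotone.  This does not follow from the hypothesis.  Take $f(n)=n$ for $n$ even and $f(n)=0$ for $n$ odd, with $r=1$: one checks directly that $f(a)>0\Rightarrow f(a+b)\ge f(b)$, yet $f$ vanishes on every odd integer.  What the hypothesis actually gives is only $f((m+m_0)r)\ge f(mr)$, i.e.\ monotonicity along residue classes modulo $m_0$, not step-by-step monotonicity.  Consequently your key step ``write $n=mc-j$ with $0\le j<c$, then $f(nr)\le f(mcr)$'' is unjustified: it would require $f(jr)>0$ for every $j\in\{1,\dots,c-1\}$, which need not hold.

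The fix is short.  Observe that $T:=\{m:f(mr)>0\}$ is closed under addition (if $a,b\in T$ then $f((a+b)r)\ge f(br)>0$), hence is a numerical semigroup.  Let $d=\gcd(T)$; by the Sylvester--Frobenius description of numerical semigroups there is an $N_0$ such that every multiple of $d$ exceeding $N_0$ lies in $T$.  Since $g(r)>0$ forces the $\limsup$ to be realized along $n\in T$ (all of which are multiples of $d$), for each such $n$ choose the least $m$ with $mc\equiv 0\pmod{d}$ and $mc-n> N_0$; then $mc-n\in T$, so $f(nr)\le f(mcr)$, and $mc-n$ is bounded by a constant depending only on $c,d,N_0$.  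Now your ratio argument $n^k/(mc)^k\to 1$ goes through verbatim and yields $g(r)\le c^{-k}g(cr)$.
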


\begin{rmk}
Although \cite[Lemma 2.2.38]{lazarsfeld04} only explicitly address the volume function, the essential content of the proof is the more general statement above.  In particular $k$ does not need to be an integer.
\end{rmk}

\begin{lem} \label{easyconelem}
Let $V$ be a finite dimensional $\mathbb{Q}$-vector space and let $C \subset V$ be a salient full-dimensional closed convex cone.  Suppose that $f: V \to \mathbb{R}_{\geq 0}$ is a function satisfying
\begin{enumerate}
\item $f(e) > 0$ for any $e \in C^{int}$,
\item there is some constant $c > 0$ so that $f(me) = m^{c}f(e)$ for any $m \in \mathbb{Q}_{>0}$ and $e \in C$, and
\item for every $v \in C^{int}$ and $e \in C^{int}$ we have $f(v+e) \geq f(v)$.
\end{enumerate}
Then $f$ is locally uniformly continuous on $C^{int}$.
\end{lem}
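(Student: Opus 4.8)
The plan is to show that $f$ is bounded above on a neighborhood of any point of $C^{int}$, and then to use convexity-type estimates coming from properties (2) and (3) to upgrade local boundedness to local uniform continuity. First I would fix a point $v_0 \in C^{int}$ and choose a small simplex (or cube) $\Delta$ spanned by finitely many points of $C^{int}$ with $v_0$ in its interior. Since $C^{int}$ is open and convex, for any $v$ in a slightly smaller neighborhood $U$ of $v_0$ one can write $v$ as a convex combination of the vertices of $\Delta$; but monotonicity (3) together with homogeneity (2) is not literally convexity, so instead I would exploit the following sandwiching: for $v, w \in C^{int}$ close together, both $w - \epsilon v$ and $v - \epsilon w$ lie in $C^{int}$ for a small $\epsilon>0$ depending continuously on how close $v,w$ are, because $C^{int}$ is open. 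Then $v = \epsilon w + (v - \epsilon w)$ with $v - \epsilon w \in C^{int}$, so property (3) gives $f(v) \geq f(\epsilon w) = \epsilon^{c} f(w)$, and symmetrically $f(w) \geq \epsilon^{c} f(v)$. Hence $\epsilon^{c} \leq f(v)/f(w) \leq \epsilon^{-c}$, and as $v \to w$ we may take $\epsilon \to 1$, giving $f(v)/f(w) \to 1$. The positivity hypothesis (1) guarantees $f(w)>0$ so the ratio is well-defined.

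To make this uniform on a compact neighborhood $\overline{U} \subset C^{int}$ of $v_0$, I would observe that one can choose $\epsilon = \epsilon(\delta)$ uniformly: there is a constant $\rho>0$ (the distance from $\overline{U}$ to $\partial C$, in some fixed norm) such that whenever $v,w \in \overline{U}$ with $\|v-w\| < \delta$, the vectors $v - \tfrac{\delta}{\rho} w$ and $w - \tfrac{\delta}{\rho} v$ still lie in $C^{int}$ for $\delta$ small — one checks this by writing $v - t w = (v - t v) + t(v-w)$ and noting $(1-t)v$ stays well inside $C$ while $t\|v-w\|$ is small. Plugging $\epsilon = 1 - \tfrac{\delta}{\rho}$ (up to harmless constants) into the two-sided bound above yields
\begin{equation*}
\left(1 - \tfrac{\delta}{\rho}\right)^{c} \leq \frac{f(v)}{f(w)} \leq \left(1 - \tfrac{\delta}{\rho}\right)^{-c},
\end{equation*}
valid for all $v,w \in \overline{U}$ with $\|v-w\|<\delta$. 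Combined with a uniform upper bound $f \leq M$ on $\overline{U}$ (which itself follows from the ratio bound once we know $f(v_0)<\infty$, using (1) to see $f$ is finite at one point and then propagating), this gives $|f(v) - f(w)| = f(w)\,|f(v)/f(w) - 1| \leq M \cdot ((1-\delta/\rho)^{-c} - 1)$, which tends to $0$ as $\delta \to 0$ independently of $v,w \in \overline{U}$. That is precisely local uniform continuity.

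The main obstacle I anticipate is the bookkeeping in the second paragraph: properties (2) and (3) are only assumed for $e \in C^{int}$ (not on the boundary) and only for positive rational scalars in (2), so I must be careful that all the auxiliary vectors $v - \epsilon w$, $w - \epsilon v$, $\epsilon v$, etc., genuinely lie in $C^{int}$ and that the scalars used are rational (or approximable by rationals, using that $f$ takes finite values and a continuity-free monotonicity argument to pass to the real scalar limit). A secondary subtlety is establishing that $f$ is finite on $C^{int}$ to begin with: hypothesis (1) only gives positivity, so I would first note that the two-sided ratio estimate, applied with one point fixed, shows $f$ is finite near any point where it is finite, and then use that $C^{int}$ is connected to spread finiteness — or, more simply, observe that the hypotheses are vacuous unless $f$ is somewhere finite, in which case the argument is as above. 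Once finiteness and the uniform ratio bound are in hand, the continuity conclusion is immediate.
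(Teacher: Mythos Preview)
The paper states this lemma without proof, so there is no argument in the paper to compare against. Your approach is the natural one and is essentially correct.

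A couple of minor points. Your concern about finiteness of $f$ is unnecessary: the hypothesis is $f:V\to\mathbb{R}_{\geq 0}$, so finite values are built in. For the key geometric step, the cleanest decomposition is $v-\epsilon w=(1-\epsilon)w+(v-w)$; since $C$ is a cone, the distance from $(1-\epsilon)w$ to $\partial C$ equals $(1-\epsilon)\,\mathrm{dist}(w,\partial C)\geq(1-\epsilon)\rho$, so $\|v-w\|<\delta$ forces $v-\epsilon w\in C^{int}$ as soon as $\epsilon<1-\delta/\rho$. Choosing $\epsilon\in\mathbb{Q}$ just below $1-\delta/\rho$ handles the rationality requirement in hypothesis~(2) at no cost. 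With that choice both $\epsilon w$ and $v-\epsilon w$ lie in $C^{int}$, so (3) applies and your sandwich $\epsilon^{c}\leq f(v)/f(w)\leq\epsilon^{-c}$ goes through. The uniform upper bound $M$ on $\overline{U}$ then follows immediately from this same ratio bound applied with $w=v_{0}$ fixed, so there is no circularity.
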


\section{Families of cycles} \label{cyclefamilysection}

Although there are several different notions of a family of cycles in the literature, the theory we will develop is somewhat insensitive to the precise choices.  It will be most convenient to use a simple geometric definition.

\begin{defn}  \label{familydef}
Let $X$ be a projective variety.  A family of $k$-cycles on $X$ consists of an integral variety $W$, a reduced closed subscheme $U \subset W \times X$, and an integer $a_{i}$ for each component $U_{i}$ of $U$, such that for each component $U_{i}$ of $U$ the first projection map $p: U_{i} \to W$ is flat dominant of relative dimension $k$.  If each $a_{i} \geq 0$ we say that we have a family of effective cycles.  We say that $\sum a_{i}U_{i}$ is the cycle underlying the family.  

In this situation $p: U \to W$ will denote the first projection map and $s: U \to X$ will denote the second projection map unless otherwise specified.  We will usually denote a family of $k$-cycles using the notation $p: U \to W$, with the rest of the data implicit.

For a closed point $w \in W$, the base change $w \times_{W} U_{i}$ is a subscheme of $X$ of pure dimension $k$ and thus defines a fundamental $k$-cycle $Z_{i}$ on $X$.  The cycle-theoretic fiber of $p: U \to W$ over $w$ is defined to be the cycle $\sum a_{i}Z_{i}$ on $X$.  We will also call these cycles the members of the family $p$.
\end{defn}

\begin{defn}
Let $X$ be a projective variety.  We say that a family of $k$-cycles $p: U \to W$ on $X$ is a rational family if every cycle-theoretic fiber lies in the same rational equivalence class.
\end{defn}

%\begin{rmk}
%Definition \ref{familydef} has a number of deficiencies.  For example, many intuitive constructions of families of cycles fail to meet the criteria: the map $\mathbb{A}^{2} \times \mathbb{A}^{2} \to \mathrm{Sym}^{2}\mathbb{A}^{2}$ is not flat over a characteristic $2$ field as pointed out in \cite{kollar96}.  Since we are primarily interested in the ``generic'' behavior of families of cycles, these shortcomings are not important for us.  On the other hand, the geometric flexibility of Definition \ref{familydef} will be very useful.
%\end{rmk}

For any projective variety $X$, \cite{kollar96} constructs a Chow variety $\Chow(X)$.  Any family of cycles in the sense of Definition \ref{familydef} is also a family of cycles in the refined sense of \cite{kollar96}; this is an immediate consequence of \cite[I.3.14 Lemma]{kollar96} and \cite[I.3.15 Corollary]{kollar96}.  Thus, if $p: U \to W$ is a family of cycles, there is an induced map $\ch: W \dashrightarrow \Chow(X)$ defined on the open locus where $W$ is normal.  For a more in-depth discussion of the relationship between Definition \ref{familydef} and \cite{kollar96} we refer to \cite{lehmann14}.

The following constructions show how to construct families of cycles from subsets $U \subset W \times X$.

\begin{constr}[Cycle version] \label{cycletofamilyconstr}
Let $X$ be a projective variety and let $W$ be an integral variety.  Suppose that $Z = \sum a_{i}V_{i}$ is a $(k+\dim W)$-cycle on $W \times X$ such that the first projection maps each $V_{i}$ dominantly onto $W$.  Let $W^{0} \subset W$ be the (non-empty) open locus over which every projection $p: V_{i} \to W$ is flat and let $U \subset \Supp(Z)$ denote the preimage of $W^{0}$.  Then the map $p: U \to W^{0}$ defines a family of cycles where we assign the coefficient $a_{i}$ to the component $V_{i} \cap U$ of $U$.
\end{constr}

\begin{constr}[Subscheme version] \label{equidimsubschemeconstr}
Suppose that $Y$ is a reduced variety and that $X$ is a projective variety.  Let $\tilde{U} \subset Y \times X$ be a closed subscheme such that the fibers of the projection $p: \tilde{U} \to Y$ are equidimensional of dimension $k$.  There is a natural way to construct a finite collection of families of effective cycles associated to the subscheme $\tilde{U}$.

Consider the image $p(\tilde{U})$ (with its reduced induced structure).  Let $\{ \tilde{W}_{j} \}$ denote the irreducible components of $p(\tilde{U})$.  For each there is a non-empty open subset $W_{j} \subset \tilde{W}_{j}$ such that the restriction of $p$ to each component of $p^{-1}(W_{j})_{red}$ is flat.  Since furthermore $p$ has equidimensional fibers, we obtain a family of effective $k$-cycles $p_{j}: U_{j} \to W_{j}$ where $U_{j} = p^{-1}(W_{j})_{red}$ and we assign coefficients so that the cycle underlying the family $p_{j}$ coincides with the fundamental cycle of $p^{-1}(W_{j})$.  We can then replace $\tilde{U}$ by the closed subscheme obtained by taking the base change to $p(\tilde{U}) - \cup_{j} W_{j}$ and repeat.  The end result is a collection of families $p_{i}: U_{i} \to W_{i}$ parametrizing the cycles contained in $\tilde{U}$.

If $p(\tilde{U})$ is irreducible and we are interested only in the generic behavior of the cycles in $\tilde{U}$, we can stop after the first step to obtain a single family of cycles.
\end{constr}

It will often be helpful to replace a family $p: U \to W$ by a slightly modified version using a flattening argument.

\begin{lem} \label{goodfamilymodification}
Let $X$ be a projective variety and let $p: U \to W$ be a family of effective cycles on $X$.  Then there is a normal projective variety $W'$ that is birational to $W$ and a family of cycles $p': U' \to W'$ such that $p$ and $p'$ agree over an open subset of the base.
\end{lem}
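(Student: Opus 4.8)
The plan is to replace $W$ by a normal projective compactification and to extend $U$ across the added boundary by flattening, using the Gruson--Raynaud theorem quoted at the start of this section. First I would fix a common model: choose a projective variety containing $W$ as a dense open subset and let $W_{0}$ be its normalization, so that after shrinking $W$ there is a dense open $V$ that is simultaneously an open subscheme of $W$ and of the normal projective variety $W_{0}$. Write the underlying cycle of $p$ as $\sum a_{i}U_{i}$, and for each component let $\overline{U_{i}} \subseteq W_{0}\times X$ be the closure of $U_{i}\cap(V\times X)$ with its reduced, hence integral, structure. Since $U_{i}\to W$ is flat, dominant, of relative dimension $k$, each $\overline{U_{i}}$ is integral of dimension $\dim W_{0}+k$ and maps onto $W_{0}$ with $k$-dimensional generic fibre.

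Next I would flatten the components one at a time. Applying \cite[Th\'eor\`eme 5.2.2]{gr71} to $\overline{U_{i}}\to W_{0}$ yields a birational morphism $\pi_{i}\colon S_{i}\to W_{0}$ (which can be taken projective, as the flattening is realized by a blow-up) and a closed subscheme $\overline{U_{i}}'\subseteq\overline{U_{i}}\times_{W_{0}}S_{i}$, flat over $S_{i}$, obtained by killing the sections whose support fails to dominate $S_{i}$. I claim $\overline{U_{i}}'$ is again integral. It is irreducible, because $\overline{U_{i}}$ is integral and $S_{i}\to W_{0}$ is birational, so $\overline{U_{i}}\times_{W_{0}}S_{i}$ has a unique component dominating $S_{i}$. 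It is reduced, because it is flat over the reduced scheme $S_{i}$ with generic fibre equal to the (reduced) generic fibre of $\overline{U_{i}}\to W_{0}$, hence its only associated point is its generic point. Flatness then forces every fibre of $\overline{U_{i}}'\to S_{i}$ to be equidimensional of dimension $k$.

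Finally I would assemble the family. Let $W'$ be a normal projective variety admitting a birational morphism to $W_{0}$ that factors through every $S_{i}$ — for instance the normalization of the closure of the graph of $W_{0}\dashrightarrow\prod_{i}S_{i}$ — so that $W'$ is birational to $W$. Set $U_{i}' := \overline{U_{i}}'\times_{S_{i}}W'$; this is flat over $W'$ as a base change of a flat morphism, and the argument of the previous paragraph shows it is integral of dimension $\dim W'+k$ dominating $W'$ with purely $k$-dimensional fibres. Put $U' := \bigcup_{i}U_{i}'$ with its reduced structure and assign the coefficient $a_{i}$ to the component $U_{i}'$; then $p'\colon U'\to W'$ satisfies Definition \ref{familydef}. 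All the modifications above are isomorphisms over the generic point of $W_{0}$ and restrict over a dense open to the $\overline{U_{i}}$, so $p$ and $p'$ agree over a dense open subset of the base.

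I expect the main obstacle to be the reducedness bookkeeping. Gruson--Raynaud flattening cuts a subscheme out of a fibre product, which a priori carries nilpotents, whereas Definition \ref{familydef} demands a reduced total space whose components are individually flat over the base; flattening all of $U$ at once makes the union flat but not necessarily its components. This is why one flattens the reduced components separately and leans on the elementary fact that flatness over a reduced base with reduced generic fibre forces reducedness. One also genuinely needs global flattening, not merely generic flatness, so that the family is defined over all of the projective base $W'$ rather than only over a dense open.
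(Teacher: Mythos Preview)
The paper states this lemma without proof, evidently regarding it as a routine consequence of the Gruson--Raynaud flattening theorem quoted in Section~\ref{prelimsection} together with Construction~\ref{cycletofamilyconstr}. Your argument is correct and is exactly the expected one: compactify and normalize the base, close up each component in the product, flatten the components one at a time, and then dominate all the flattenings by a single normal projective model.

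Your decision to flatten the $U_{i}$ individually rather than all of $U$ at once is the right call, and your explanation of why is accurate: Definition~\ref{familydef} requires each component separately to be flat over the base, not merely their union, and a single application of flattening to $\overline{U}$ would only guarantee the latter. Likewise, the integrality check for $\overline{U_i}'$ and its base change to $W'$ is the genuine content here; the cleanest way to phrase it is that for a flat morphism to an integral base, the associated points of the total space all lie in the generic fibre, so an integral generic fibre forces the total space to be integral. Your version (irreducibility by uniqueness of the dominating component, reducedness from flat over reduced with reduced generic fibre) is equivalent and fine. One small point worth making explicit: the $S_{i}$ produced by Raynaud--Gruson are blow-ups of the integral scheme $W_{0}$, hence themselves integral and projective, which is what makes the associated-points argument go through before you ever pass to $W'$.
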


%\begin{proof}
%Let $\tilde{W}$ be any projective closure of $W$ and let $\tilde{U}$ be the closure of $U$ in $\tilde{W} \times X$.  Let $\phi: W' \to \tilde{W}$ be the normalization of a simultaneous flattening of the morphisms $\tilde{p}: \tilde{U}_{i} \to \tilde{W}$ for the components $\tilde{U}_{i}$ of $\tilde{U}$.  Let $U'$ denote the reduced subscheme of $W' \times X$ defined by the components of $\tilde{U} \times_{\tilde{W}} W'$ that dominate $W'$.  Since the components of $U'$ are in bijection with the components of $U$, we can assign to each component of $U'$ the coefficient of the corresponding component of $U$.  Then $p': U' \to W'$ is a family of effective $k$ cycles.
%\end{proof}

%\begin{rmk}
%It is also important to know whether a rational family $p: U \to W$ can be extended to a \emph{rational} family over a projective closure of $W$ (although we will not need such statements below).  The arguments of \cite[Theorem 3]{samuel56} show that the subset of $\Chow(X)$ parametrizing cycles in a fixed rational equivalence class is a countable union of closed subvarieties.  Thus we can extend families in this way when working over an uncountable algebraically closed field $K$. 
%\end{rmk}

\subsection{Geometry of families}

\begin{defn}
Let $X$ be a projective variety and let $p: U \to W$ be a family of effective cycles on $X$.
%\begin{itemize}
%\item We say that $p$ is a reduced family if every coefficient $a_{i}$ is $1$.  Any family of effective cycles yields a reduced family by simply changing the coefficients.
%\item
We say that $p$ is an irreducible family if $U$ only has one component.
For any component $U_{i}$ of $U$, we have an associated irreducible family $p_{i}: U_{i} \to W$ (with coefficient $a_{i}$).
%\end{itemize}
\end{defn}

Given a family of cycles $p: U \to W$, consider the cycle $V$ underlying $U$ on $W \times X$.  We can then perform various cycle-theoretic constructions to $V$ and reconstruct a family of cycles using Construction \ref{cycletofamilyconstr}.  Note that the resulting family will usually only be defined over an open subset of the base $W$.  In this way we can define:
\begin{itemize}
\item Proper pushforward families.
\item Flat pullback families (which increase the dimension of the members of the family by the relative dimension of the map).
\item Restrictions of families to subvarieties of $W$ via base change of the flat map $p$.
\item Family sums: given two families $p: U \to W$ and $q: S \to T$, we construct a family over an open subset of $W \times T$ whose cycle-theoretic fibers are sums of the fibers of $p$ and $q$.
\item Strict transform families: given a rational map $\phi: X \dashrightarrow Y$, we first remove any components of $U$ contained in the locus where $\phi$ is not an isomorphism, and take the strict transform of the rest.
\item Intersections against the members of a linear series $|L|$: this defines a family of $k-1$ cycles over an open subset of $W \times |L|$.
\end{itemize}

\section{Mobility count} \label{mcsection}

The mobility count of a family of effective cycles can be thought of informally as a count of how many general points of $X$ are contained in members of the family.  Although we are mainly interested in families of cycles, it will be helpful to set up a more general framework.

\begin{defn} \label{mcdefn}
Let $X$ be an integral projective variety and let $W$ be a reduced variety.  Suppose that $U\subset W \times X$ is a subscheme and let $p: U \to W$ and $s: U \to X$ denote the projection maps.  The mobility count $\mc(p)$ of the morphism $p$ is the maximum non-negative integer $b$ such that the map
\begin{equation*}
U \times_{W} U \times_{W} \ldots \times_{W} U \xrightarrow{s \times s \times \ldots \times s} X \times X \times \ldots \times X
\end{equation*}
is dominant, where we have $b$ terms in the product on each side.  (If the map is dominant for every positive integer $b$, we set $\mc(p) = \infty$.)

For $\alpha \in N_{k}(X)_{\mathbb{Z}}$, the mobility count of $\alpha$, denoted $\mc(\alpha)$, is defined to be the largest mobility count of any family of effective cycles representing $\alpha$.  We define the rational mobility count $\rmc(\alpha)$ in the analogous way by restricting our attention to rational families.
\end{defn}

\begin{exmple} \label{mcandvar}
Let $X$ be an integral projective variety and let $p: U \to W$ be a family of effective $k$-cycles on $X$.  Then $\mc(p) \leq (\dim W)/(\dim X - k)$.  Indeed, if the map of Definition \ref{mcdefn} is dominant then dimension considerations show that $\mc(p)k + \dim W \geq \mc(p) \dim X$.
\end{exmple}

%The mobility count should be seen as an analogue of the dimension of $H^{0}(X,\mathcal{O}_{X}(L))$ for a Cartier divisor $L$, as demonstrated by the following example.  This analogue is surprisingly robust and seems a good indication of what behavior to expect for higher codimension cycles.

\begin{exmple} \label{mcdiv}
Let $X$ be an normal projective variety and let $L$ be a Cartier divisor on $X$.  Let $p: U \to W$ denote the family of effective divisors on $X$ defined by the complete linear series for $L$.  Then
\begin{equation*}
\mc(p) = \dim H^{0}(X,\mathcal{O}_{X}(L)) - 1.
\end{equation*}
Indeed, it is easy to see that the set of divisors in our family which contain a general point of $X$ corresponds to a codimension $1$ linear subspace of $|L|$.  Furthermore, an easy induction argument shows that the collection of $b$ sufficiently general points corresponds to a collection of $b$ hyperplanes which intersect transversally.  Thus, the maximum number of general points contained in a member of $|L|$ is exactly $\dim \mathbb{P}(|L|)$, and using the incidence correspondence one identifies this number as $\mc(p)$ as well.
\end{exmple}

\begin{lem} \label{fibercontainedmc}
Let $X$ be an integral projective variety.  Let $W$ be a reduced variety and let $p: U \to W$ denote a closed subscheme of $W \times X$.  Suppose that $T$ is another reduced variety and $q: S \to T$ is a closed subscheme of $T \times X$ such that every fiber of $p$ over a closed point of $W$ is contained in a fiber of $q$ over some closed point of $T$ (as subsets of $X$).  Then $\mc(p) \leq \mc(q)$.
\end{lem}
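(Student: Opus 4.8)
The plan is to show directly that whenever the iterated fiber product of $U$ over $W$ maps dominantly onto $X^b$, the same holds for $S$ over $T$, so that $\mc(p) \le \mc(q)$ follows from the definition. Suppose $b$ is a non-negative integer with
\[
U \times_W U \times_W \cdots \times_W U \xrightarrow{s \times \cdots \times s} X \times \cdots \times X
\]
dominant ($b$ factors on each side). A point of the left-hand fiber product over a closed point $w \in W$ is a $b$-tuple of points all lying in the fiber $U_w \subset X$. By hypothesis $U_w$ is contained in some fiber $S_{t}$ of $q$ over a closed point $t \in T$. The point of the argument is to promote this pointwise containment to a morphism of varieties, so that dominance transfers.

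First I would reduce to the situation where there is an actual morphism $W \to T$ carrying $w$ to a point $t$ with $U_w \subseteq S_t$. Naively one gets only a set-theoretic assignment $w \mapsto t$, which need not even be well-defined (several $t$ could work) and need not be a morphism. To fix this, I would pass to the relative Chow variety: after replacing $W$ by the open locus where it is normal (and shrinking, which does not change dominance onto $X^b$ since the relevant locus is open and dense), the family $p$ induces a morphism $\ch \colon W \to \Chow(X)$ as discussed after Construction \ref{cycletofamilyconstr}, and similarly $q$ induces $T \dashrightarrow \Chow(X)$. The containment $U_w \subseteq S_t$ for every $w$ says that the image of $W$ in $\Chow(X)$ lands in the closure of the set of Chow points whose support is contained in the support of a member of $q$; concretely, I would instead argue more elementarily using the incidence variety
\[
I = \{(w,t,x) \in W \times T \times X \ : \ x \in U_w,\ x \in S_t,\ U_w \subseteq S_t\},
\]
take an irreducible component $I_0$ dominating $W$ (one exists by the pointwise hypothesis plus a standard constructibility/Noetherianity argument on the fibers of $I \to W$), and let $W' \to W$ be the projection and $W' \to T$ the other projection. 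Replacing $W$ by a suitable dense open subset of $W'$, we get a genuine morphism $g \colon W' \to T$ with $U_{w'} \subseteq S_{g(w')}$ for all closed $w' \in W'$, and the composite $U \times_W W' \to X$ is still dominant after the $b$-fold fiber product (base change along a dominant map of the base preserves dominance of the total evaluation map).

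Now the conclusion is formal: the fiber product $(U \times_W W') \times_{W'} \cdots \times_{W'} (U \times_W W')$ maps dominantly to $X^b$, and since each fiber of $U \times_W W'$ over $w'$ sits inside the fiber of $S$ over $g(w')$, there is a morphism from this fiber product to $S \times_T \cdots \times_T S$ commuting with the evaluation maps to $X^b$. Dominance of the source composed with evaluation forces the evaluation map of $S \times_T \cdots \times_T S$ to be dominant as well. Hence $b \le \mc(q)$, and taking $b = \mc(p)$ (or letting $b \to \infty$ in the infinite case) gives $\mc(p) \le \mc(q)$.

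The main obstacle is the middle step: upgrading the purely set-theoretic "every fiber of $p$ sits in some fiber of $q$" into an honest morphism $W' \to T$ defined on a dense open subset, with the containment holding fiberwise over $W'$. This requires some care with the incidence correspondence $I$ — checking that $I \to W$ is surjective on closed points (immediate from the hypothesis), that $I$ has a component dominating $W$ whose generic fiber still witnesses the containment, and that one may shrink to make the projection to $T$ a morphism. Everything after that is a soft diagram-chase with dominant morphisms, and everything before it (passing to the normal locus, using that dominance onto $X^b$ is unaffected by shrinking $W$ to a dense open) is routine.
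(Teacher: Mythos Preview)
Your argument is essentially workable, but you have made the problem harder than it is. The ``main obstacle'' you identify --- upgrading the pointwise assignment $w \mapsto t$ to an honest morphism $W' \to T$ --- is not needed at all, and the incidence-correspondence/constructibility manoeuvre you sketch carries real technical baggage (e.g.\ checking that $\{(w,t): U_w \subseteq S_t\}$ is constructible, that some component dominates $W$, that the containment persists on the closure, that the induced map $U \times_W W' \to S$ exists as schemes and not just on closed points). None of this is necessary.

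The paper's proof is a one-line observation about \emph{images}. The fiber of $U^{\times_W b} \to W$ over a closed point $w$ is $(U_w)^{b}$, and its image in $X^{b}$ under $s^{b}$ is again $(U_w)^{b} \subset X^{b}$. If $U_w \subseteq S_t$ as subsets of $X$, then $(U_w)^{b} \subseteq (S_t)^{b}$ as subsets of $X^{b}$. Hence the image of $U^{\times_W b} \to X^{b}$ is contained, as a set, in the image of $S^{\times_T b} \to X^{b}$. Dominance of the first map means its image is dense, so the image of the second is dense as well, i.e.\ the second map is dominant. No morphism $W \to T$ is ever constructed; the hypothesis is used purely at the level of closed points and set-theoretic images.

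So the comparison is: you build a morphism between the two fiber products, whereas the paper simply compares their images in $X^{b}$. Your route would work after patching the details, but it buys nothing here and obscures the fact that the lemma is immediate from the definition of $\mc$.
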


\begin{proof}
The conditions imply that for any $b>0$, the $s^{b}$-image of any fiber of $p^{b}: U^{\times_{W} b} \to W$ over a closed point of $W$ is set-theoretically contained in the image of a fiber of $q^{b}: S^{\times_{T} b} \to T$ over a closed point of $T$ (as subsets of $X^{\times b}$).  The statement follows.%Suppose that $M$ is an irreducible component of $U^{\times_{W} b}$.  Then the image of $S^{\times_{T} b}$ in $X^{\times b}$ contains the image of a Zariski-dense collection of fibers of $p^{b}|_{M}$.  In particular if $M$ dominates $X^{\times b}$, so does $S^{\times_{T} b}$.
\end{proof}

\begin{prop} \label{opensubsetsforcycles}
Let $X$ be an integral projective variety.
\begin{enumerate}
\item Let $W$ be an integral variety and let $U \subset W \times X$ be a closed subscheme such that $p: U \to W$ is flat.  For an open subvariety $W^{0} \subset W$ let $p^{0}: U^{0} \to W^{0}$ be the base change to $W^{0}$.  Then $\mc(p) = \mc(p^{0})$.
\item Let $p: U \to W$ be a family of effective cycles on $X$.  For an open subvariety $W^{0} \subset W$ let $p^{0}: U^{0} \to W^{0}$ be the restriction family.  Then $\mc(p) = \mc(p^{0})$.
\item Let $W$ be a normal integral variety and let $U \subset W \times X$ be a closed subscheme such that:
\begin{itemize}
\item Every fiber of the first projection map $p: U \to W$ has the same dimension.
\item Every component of $U$ dominates $W$ under $p$.
\end{itemize}
Let $W^{0} \subset W$ be an open subset and $p^{0}: U^{0} \to W^{0}$ be the preimage of $W^{0}$.  Then $\mc(p) = \mc(p^{0})$.
\end{enumerate}
\end{prop}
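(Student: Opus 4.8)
The plan is to prove the three statements by reducing each to a common principle: passing to an open subset of the base cannot change the mobility count, because the mobility count only depends on the generic behavior of the fibers over $W$, and dominance of the iterated evaluation map $s^{b}$ is detected at the generic point.

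For part (1), I would argue as follows. The inequality $\mc(p^{0}) \leq \mc(p)$ is immediate, since $U^{0} \times_{W^{0}} \cdots \times_{W^{0}} U^{0}$ is an open subscheme of $U \times_{W} \cdots \times_{W} U$, so dominance of the iterated evaluation map on the former forces dominance on the latter. For the reverse inequality, suppose $s^{b}: U^{\times_{W} b} \to X^{\times b}$ is dominant for some $b$. Flatness of $p$ implies that $U^{\times_{W} b} \to W$ is flat (and of finite type), hence its image is open; more importantly, $U^{\times_{W} b}$ is the closure of its restriction over any dense open of $W$, so no component of $U^{\times_{W} b}$ maps into the complement of $W^{0}$. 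Thus $U^{0 \times_{W^{0}} b}$ is dense in $U^{\times_{W} b}$, and the composition with $s^{b}$, having dense image in the target of a dominant map, is still dominant. Hence $\mc(p^{0}) \geq b$. (One subtlety: one must check that $U^{\times_{W} b}$ has no embedded or extraneous components supported over $W \setminus W^{0}$; this follows because flat base change of flat morphisms is flat and fiber products of flat $W$-schemes are flat over $W$, so every associated point of $U^{\times_{W} b}$ lies over an associated point of $W$, which is the generic point since $W$ is integral.)

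For part (2), the key point is that in the definition of a family of cycles (Definition \ref{familydef}) each component $U_{i} \to W$ is already flat and dominant, so $U \to W$ is flat. The restriction family $p^{0}$ is exactly the base change of $U$ to $W^{0}$ in the sense of Construction \ref{cycletofamilyconstr} — the coefficients $a_{i}$ are unchanged and the underlying reduced subscheme is the preimage of $W^{0}$ — so part (2) is a direct application of part (1) to the subscheme $U$, using that the mobility count $\mc(p)$ as defined in Definition \ref{mcdefn} depends only on the subscheme $U \subset W \times X$ (via the evaluation maps) and not on the cycle-theoretic coefficients. For part (3), $U \to W$ need not be flat, but by hypothesis it is equidimensional with every component dominating the normal base $W$; by the theorem of \cite{gr71} quoted in Section \ref{prelimsection}, or alternatively by Construction \ref{cycletofamilyconstr} together with generic flatness, there is a dense open $W^{1} \subset W$ over which every component of $U$ becomes flat. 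Restricting to $W^{1}$ (and further to $W^{1} \cap W^{0}$) reduces part (3) to part (1): we get $\mc(p) = \mc(p|_{W^{1}}) = \mc(p|_{W^{1} \cap W^{0}}) = \mc(p^{0})$, where the middle equality again uses part (1) applied over the flat base $W^{1}$, and the outer equalities use that $W^{1}$ and $W^{1}\cap W^{0}$ are dense opens in $W$ and $W^{0}$ respectively.

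The main obstacle I anticipate is the bookkeeping in part (1) around fiber products: one needs that $U^{\times_{W} b}$, which may be non-reduced or reducible, has all of its irreducible components (equivalently, all associated points) lying over the generic point of $W$, so that intersecting with the open preimage of $W^{0}$ loses nothing. This is exactly where flatness of $p$ is essential — flat morphisms over an integral base have no embedded components over proper closed subsets — and it is the reason parts (2) and (3) first reduce to the flat situation before invoking part (1). Everything else is a routine dominance-at-the-generic-point argument.
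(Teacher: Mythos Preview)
Your treatment of (1) is correct and essentially identical to the paper's.

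In (2), your claim that ``$U \to W$ is flat'' is false in general: a reduced union of $W$-flat schemes need not be flat (take two sections of $W \times X \to W$ that coincide over a proper closed subset of $W$). The paper instead observes that every irreducible component of $U^{\times_{W} b}$ sits inside some $U_{i_{1}} \times_{W} \cdots \times_{W} U_{i_{b}}$, and each such product is flat over $W$; then the density argument of (1) applies componentwise. This is a minor repair.

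The genuine gap is in (3). Your chain
\[
\mc(p) = \mc(p|_{W^{1}}) = \mc(p|_{W^{1}\cap W^{0}}) = \mc(p^{0})
\]
is circular: the two ``outer equalities'' are precisely the nontrivial direction of the statement you are proving. Restricting to the generic-flatness locus $W^{1}$ only gives the trivial inequality $\mc(p) \geq \mc(p|_{W^{1}})$, because $U^{\times_{W} b}$ may have irreducible components lying entirely over $W \setminus W^{1}$, and nothing you have said prevents one of \emph{those} from being the component that dominates $X^{\times b}$. The equidimensionality hypothesis bounds the fiber dimension of such a component but does not exclude it. You also never use the normality of $W$, which should be a signal that something is missing.

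The paper's argument for (3) is genuinely different. It takes a flattening $\phi: W' \to W$ (birational, an isomorphism over $W^{0}$) and uses normality of $W$ to conclude that each fiber $\phi^{-1}(w)$ is connected. Together with flatness of $p'$ and equidimensionality of the original fibers, this forces every fiber of the flattened family $p'$ to coincide \emph{set-theoretically} with the corresponding fiber of $p$. Lemma~\ref{fibercontainedmc} then gives $\mc(p) \leq \mc(p')$, and part (1) applied to the flat $p'$ gives $\mc(p') = \mc(p^{0})$. The point is that one compares $p$ against a flat family over a \emph{different} base rather than restricting the base of $p$ itself.
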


\begin{rmk}
Proposition \ref{opensubsetsforcycles} indicates that the mobility count is insensitive to the choice of definition of a family of effective cycles.  It also shows that the mobility count only depends on general members of the family.
\end{rmk}

\begin{proof}
(1) The map $p^{b}: U^{\times_{W} b} \to W$ is proper flat, so that every component of $U^{\times_{W} b}$ dominates $W$.  Then $(U^{0})^{\times_{W^{0}} b}$ is dense in $U^{\times_{W} b}$ for any $b$.  Thus $\mc(p^{0}) = \mc(p)$.

(2) Let $\{ U_{i} \}$ denote the irreducible components of $U$.  Every irreducible component of $U^{\times_{W} b}$ is contained in a product of the $U_{i}$ over $W$.  Since each $p|_{U_{i}}: U_{i} \to W$ is flat, we can apply the same argument as in (1).

(3) The inequality $\mc(p) \geq \mc(p^{0})$ is clear.  To show the converse inequality, we may suppose that $U$ is reduced.  We may also shrink $W^{0}$ and assume that $p^{0}$ is flat.

Let $p': U' \to W'$ be a flattening of $p$ via the birational morphism $\phi: W' \to W$.  We may ensure that $\phi$ is an isomorphism over $W^{0}$.  Choose a closed point $w \in W$ and let $T \subset W'$ be the set-theoretic preimage.  Since $W$ is normal $T$ is connected.

Choose a closed point $w' \in T$.  By construction the fiber $U'_{w'}$ is set theoretically contained in $U_{w}$ (as subsets of $X$).  Since they have the same dimension, $U'_{w'}$ is a union of components of $U_{w}$.  Since $p'$ is flat over $T$ and $T$ is connected, in fact $U'_{w'}$ and $U_{w}$ have the same number of components and thus are set-theoretically equal.  Applying Lemma \ref{fibercontainedmc} and part (1) we see that $\mc(p) \leq \mc(p') = \mc(p^{0})$.
\end{proof}

We can now describe how the mobility count changes under certain geometric constructions of families of cycles.

\begin{lem} \label{ignorecomponents}
Let $X$ be an integral projective variety and let $p: U \to W$ be a family of effective $k$-cycles.  Suppose that $U$ has a component $U_{i}$ whose image in $X$ is contained in a proper subvariety.  Then $\mc(p) = \mc(p')$ where $p'$ is the family defined by removing $U_{i}$ from $U$.
\end{lem}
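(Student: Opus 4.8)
The plan is to observe that the mobility count of a family of effective cycles $p\colon U\to W$ depends only on $U$ as a closed subset of $W\times X$, and in particular only on its irreducible components together with the flat dominant projections $p|_{U_j}\colon U_j\to W$ — the coefficients $a_j$ play no role, since dominance of a morphism is a property of the underlying topological spaces. Write $U=\bigcup_j U_j$ for the irreducible components and let $U'=\bigcup_{j\neq i}U_j$ be the subscheme obtained by deleting $U_i$, so that the base of $p'$ is again $W$ and each $p|_{U_j}$ with $j\neq i$ is still flat dominant of relative dimension $k$; thus $p'$ is a legitimate family of effective $k$-cycles, with the same coefficients on the surviving components. Since $(U')^{\times_W b}$ is a closed subscheme of $U^{\times_W b}$ for every $b$, dominance of $s^{\times b}$ on $(U')^{\times_W b}$ forces dominance on $U^{\times_W b}$; hence $\mc(p)\geq\mc(p')$ directly from Definition \ref{mcdefn}.

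For the reverse inequality, fix $b$ for which $s^{\times b}\colon U^{\times_W b}\to X^{b}$ is dominant and choose an irreducible component $Y$ of $U^{\times_W b}$ dominating $X^{b}$. Since $U^{\times_W b}$ is the union of the products $U_{j_1}\times_W\cdots\times_W U_{j_b}$ over all tuples $(j_1,\dots,j_b)$, the component $Y$ lies in one such product. Composing the inclusion $Y\hookrightarrow U_{j_1}\times_W\cdots\times_W U_{j_b}$ with the projection onto the $l$-th factor and then with $s$ reproduces the $l$-th coordinate of $s^{\times b}|_Y$, which is dominant onto $X$; so $s\colon U_{j_l}\to X$ is dominant for each $l$. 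By hypothesis $s(U_i)$ is contained in a proper closed subvariety of $X$, so it is not dense, and therefore $j_l\neq i$ for all $l$. Hence $Y\subseteq(U')^{\times_W b}$, so $s^{\times b}$ is dominant on $(U')^{\times_W b}$ and $b\leq\mc(p')$. Taking the supremum over all such $b$ gives $\mc(p)\leq\mc(p')$; the same comparison of the two index sets forces equality also in the case where both sides equal $\infty$.

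The only substantive step is the decomposition of $U^{\times_W b}$ into products of components together with the observation that dominance onto $X^{b}$ forces each coordinate factor $s|_{U_{j_l}}$ to be dominant onto $X$, which is exactly what excludes the deleted component $U_i$. Everything else is bookkeeping: flatness and relative dimension are properties of the individual maps $p|_{U_j}$, so the surviving components still assemble into a family; and the degenerate case $U=U_i$ — where $s\colon U\to X$ is already non-dominant, so $\mc(p)=0$ and $p'$ is the empty family with $\mc(p')=0$ by convention — is either treated separately or subsumed in the $b=1$ instance of the argument above.
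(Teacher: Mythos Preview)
Your proof is correct and is precisely the detailed unpacking of what the paper means by ``immediate from the definition'': you identify the irreducible components of $U^{\times_W b}$ inside products $U_{j_1}\times_W\cdots\times_W U_{j_b}$, observe that a component dominating $X^{\times b}$ forces each factor $U_{j_l}$ to dominate $X$, and conclude that the non-dominant component $U_i$ never appears. This is exactly the intended argument, just written out in full rather than left to the reader.
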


\begin{proof} This is immediate from the definition.
\end{proof}

\begin{lem} \label{stricttransformmc}
Let $\psi: X \dashrightarrow Y$ be a birational morphism of integral projective varieties.  Let $p: U \to W$ be a family of effective $k$-cycles on $X$ and let $p'$ denote the strict transform family on $Y$.  Then $\mc(p) = \mc(p')$.
\end{lem}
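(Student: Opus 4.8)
The plan is to compare the mobility counts directly by analyzing the behavior of general members of the two families on the locus where $\psi$ is an isomorphism. Let $V \subset X$ be the open locus on which $\psi$ restricts to an isomorphism onto its image, and let $V' \subset Y$ be the corresponding open subset of $Y$; since $\psi$ is birational, both $V$ and $V'$ are dense open subvarieties and $\psi|_V : V \xrightarrow{\sim} V'$. The key point is that the notion of dominance appearing in Definition \ref{mcdefn} only sees a dense open subset of $X$ (resp.\ of $Y$), so it suffices to work over $V$ and $V'$. First I would recall the construction of the strict transform family: by the recipe in the bulleted list of Section \ref{cyclefamilysection}, one discards any component $U_i$ of $U$ whose image $s(U_i) \subset X$ fails to meet $V$, and for the remaining components one forms the closure in $W \times Y$ of the image of $(U_i \cap s^{-1}(V))$ under $\mathrm{id}_W \times \psi$, then restricts to an open subset of the base where the resulting projection is flat. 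Call the result $p' : U' \to W'$, where $W' \subset W$ is open.

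The discarded components contribute nothing to either count: by Lemma \ref{ignorecomponents}, removing from $U$ the components whose image lies in the proper closed subset $X \setminus V$ does not change $\mc(p)$, so we may assume from the outset that every component of $U$ dominates neither$X\setminus V$ nor any proper subvariety — more precisely, that every component meets $s^{-1}(V)$. Next I would invoke Proposition \ref{opensubsetsforcycles}(2) to replace $W$ by $W'$ (shrinking the base does not change $\mc(p)$) and similarly to justify that $\mc(p')$ may be computed over any dense open subset of $W'$. Now over a suitable dense open $W'' \subseteq W'$, the two families $p^{-1}(W'') \cap s^{-1}(V) \to W''$ and $(p')^{-1}(W'') \cap (s')^{-1}(V') \to W''$ are identified by the isomorphism $\mathrm{id}_{W''} \times \psi|_V$. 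Since the fibered powers $U \times_W \cdots \times_W U$ and $U' \times_{W'} \cdots \times_{W'} U'$ over this common open base are likewise identified on the open loci lying over $V$ and $V'$, and since $V^{\times b} \subset X^{\times b}$ and $(V')^{\times b} \subset Y^{\times b}$ are dense, the product map $s \times \cdots \times s$ is dominant if and only if $s' \times \cdots \times s'$ is dominant, for each $b$. Hence $\mc(p) = \mc(p')$.

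The main obstacle is purely bookkeeping: one must be careful that after discarding components and shrinking bases the family $p'$ produced by the strict-transform construction genuinely agrees — as a subscheme of $W'' \times Y$, up to taking a dense open subset of the base — with the image of the relevant open part of $U$ under $\mathrm{id} \times \psi$, rather than with some larger or smaller subscheme arising from taking closures. This is where one uses that $\psi$ is an isomorphism on $V$, so that over $V'$ no new components or embedded structure can appear, together with Proposition \ref{opensubsetsforcycles}(2) to pass freely between a family and its restriction to a dense open subset of the base. Once that identification is in place, the dominance comparison is immediate from the density of $V^{\times b}$ and $(V')^{\times b}$.
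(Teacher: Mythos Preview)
Your proof is correct and follows essentially the same approach as the paper's: discard components not meeting the isomorphism locus via Lemma~\ref{ignorecomponents}, shrink the base via Proposition~\ref{opensubsetsforcycles}(2), and then observe that the $b$-fold fiber products over $W$ and their maps to $X^{\times b}$ and $Y^{\times b}$ are birationally identified via $\psi$. The paper compresses your second and third paragraphs into the single sentence ``the morphisms $(U^{0})^{\times_{W^{0}}b} \to X^{\times b}$ and $(U')^{\times_{W^{0}}b} \to Y^{\times b}$ are birationally equivalent for every $b$,'' but the content is the same.
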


\begin{proof}
By Lemma \ref{ignorecomponents} we may assume that every component of $U$ dominates $X$.  Using Proposition \ref{opensubsetsforcycles} (2), we may replace $p$ by the restricted family $p^{0}: U^{0} \to W^{0}$, where $W^{0}$ is the locus of definition of the strict transform family $p': U' \to W^{0}$.  The statement is then clear using the fact that the morphisms $(U^{0})^{\times_{W^{0}}b} \to X^{\times b}$ and $(U')^{\times_{W^{0}}b} \to Y^{\times b}$ are birationally equivalent for every $b$.
\end{proof}

\begin{lem} \label{familysummc}
Let $X$ be an integral projective variety.  Suppose that $W$ and $T$ are reduced varieties and that $p_{1}: U \to W$ and $p_{2}: S \to T$ are closed subschemes of $W \times X$ and $T \times X$ respectively.  Let $q: V \to W \times T$ denote the subscheme
\begin{equation*}
U \times T \cup W \times S \subset W \times T \times X.
\end{equation*}
Then $\mc(q) = \mc(p_{1}) + \mc(p_{2})$.

In particular, if $p_{1}$ and $p_{2}$ are families of effective $k$-cycles, then the mobility count of the family sum is the sum of the mobility counts.
\end{lem}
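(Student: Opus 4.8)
The plan is to compute $\mc(q)$ by analyzing, for each $b$, the image of the map $V^{\times_{W\times T}b}\to X^{\times b}$ appearing in Definition \ref{mcdefn}, and to show it is dominant precisely when $b\le\mc(p_1)+\mc(p_2)$. Write $s_U\colon U\to X$, $s_S\colon S\to X$, $s_V\colon V\to X$ for the second projections. Over a closed point $(w,t)\in W\times T$ the fiber of $q\colon V\to W\times T$, viewed as a subset of $X$, is $U_w\cup S_t$, where $U_w\subseteq X$ and $S_t\subseteq X$ are the fibers of $p_1$ over $w$ and of $p_2$ over $t$. Consequently the image of $s_V^{\times b}$ is $\bigcup_{(w,t)}(U_w\cup S_t)^{\times b}\subseteq X^{\times b}$.

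First I would decompose this set. Expanding $(U_w\cup S_t)^{\times b}=\bigcup_{J\subseteq\{1,\dots,b\}}\big(\prod_{i\in J}U_w\big)\times\big(\prod_{i\notin J}S_t\big)$ and using the elementary identity $\bigcup_{(w,t)}A_w\times B_t=\big(\bigcup_w A_w\big)\times\big(\bigcup_t B_t\big)$ for subsets $A_w\subseteq X^{\times a}$, $B_t\subseteq X^{\times(b-a)}$, one sees that, up to the permutation of the $b$ coordinates of $X^{\times b}$ carrying $J$ to $\{1,\dots,|J|\}$, the image of $s_V^{\times b}$ is the finite union over all $J$ of the products $\operatorname{Im}(s_U^{\times|J|})\times\operatorname{Im}(s_S^{\times(b-|J|)})$. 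Passing to closures — which commutes with finite unions, with products of subsets, and with coordinate permutations — shows that $\overline{\operatorname{Im}(s_V^{\times b})}$ is the finite union of the sets $\overline{\operatorname{Im}(s_U^{\times|J|})}\times\overline{\operatorname{Im}(s_S^{\times(b-|J|)})}$.

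Then I would invoke that $X$ is integral, so $X^{\times b}$ is irreducible: a finite union of closed subsets of $X^{\times b}$ equals $X^{\times b}$ iff one of them does, and $C_1\times C_2=X^{\times a}\times X^{\times(b-a)}$ with $C_1,C_2$ closed iff $C_1=X^{\times a}$ and $C_2=X^{\times(b-a)}$. Hence $s_V^{\times b}$ is dominant iff there is a $J$ with $s_U^{\times|J|}$ and $s_S^{\times(b-|J|)}$ both dominant. Taking $b=\mc(p_1)+\mc(p_2)$ and $J$ of size $\mc(p_1)$ shows $\mc(q)\ge\mc(p_1)+\mc(p_2)$; conversely, if $s_V^{\times b}$ is dominant then for the relevant $J$ dominance of $s_U^{\times|J|}$ forces $|J|\le\mc(p_1)$ and dominance of $s_S^{\times(b-|J|)}$ forces $b-|J|\le\mc(p_2)$, so $b\le\mc(p_1)+\mc(p_2)$. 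Thus $\mc(q)=\mc(p_1)+\mc(p_2)$, the cases where $\mc(p_1)$ or $\mc(p_2)$ equals $\infty$ being handled by the same bookkeeping.

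For the final assertion, suppose $p_1$ and $p_2$ are families of effective $k$-cycles. Since each component $U_i$ of $U$ (resp.\ $S_j$ of $S$) is flat and dominant of relative dimension $k$ over $W$ (resp.\ $T$), each component $U_i\times T$ and $W\times S_j$ of $V$ is flat and dominant of relative dimension $k$ over $W\times T$, so $q\colon V\to W\times T$ is itself a family of effective $k$-cycles, and its cycle-theoretic fiber over $(w,t)$ is the sum of the cycle-theoretic fibers of $p_1$ over $w$ and of $p_2$ over $t$; that is, $q$ realizes the family sum. Since the mobility count of a family of effective cycles is by definition the mobility count of its structure morphism — or, if one prefers to take the family sum over a dense open of $W\times T$, by Proposition \ref{opensubsetsforcycles}(2) — the claim follows from what was proved above. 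I expect the only genuine obstacle to be the bookkeeping in the decomposition step, particularly the set-theoretic identity $\bigcup_{(w,t)}A_w\times B_t=(\bigcup_w A_w)\times(\bigcup_t B_t)$ and the fact that closure commutes with all the operations in sight; once that is in place, irreducibility of $X^{\times b}$ does the rest.
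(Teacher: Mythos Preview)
Your proof is correct and follows essentially the same line as the paper's: both rest on the decomposition of the $b$-fold self-product of $V$ over $W\times T$ indexed by the subsets $J\subseteq\{1,\dots,b\}$ recording which factors land in $U$ and which in $S$. The only cosmetic difference is that the paper phrases this scheme-theoretically (each irreducible component of $V^{\times_{W\times T}b}$ lies in some $(U^{\times_W c_1}\times T)\times_{W\times T}(W\times S^{\times_T c_2})$) and deduces the family-sum statement via Proposition~\ref{opensubsetsforcycles}(3) after restricting to the normal locus, whereas you work set-theoretically with images in $X^{\times b}$, use irreducibility of $X^{\times b}$, and invoke Proposition~\ref{opensubsetsforcycles}(2) instead.
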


\begin{proof}
Set $b_{1} = \mc(p_{1})$ and $b_{2} = \mc(p_{2})$.  There is a dominant projection map
\begin{equation*}
\left( U^{\times_{W} b_{1}} \times T \right) \times_{W \times T} \left( W \times S^{\times_{T} b_{2}} \right) \to X^{\times (b_{1} + b_{2})}.
\end{equation*}
Since the domain is naturally a subscheme of $V^{\times_{W \times T} b_{1} + b_{2}}$, we obtain $\mc(q) \geq \mc(p_{1}) + \mc(p_{2})$.

Conversely, any irreducible component of $V^{\times_{W \times T} c}$ is (up to reordering the terms) a subscheme of
\begin{equation*}
\left( U^{\times_{W} c_{1}} \times T \right) \times_{W \times T} \left( W \times S^{\times_{T} c_{2}} \right)
\end{equation*}
for some non-negative integers $c_{1}$ and $c_{2}$ with $c = c_{1} + c_{2}$ where the map to $X^{\times b}$ is component-wise.  This yields the reverse inequality.

To extend the lemma to the family sum, first replace $p_{1}$ and $p_{2}$ by their restrictions to the normal locus of $W$ and $T$ respectively; this does not change the mobility count by Proposition \ref{opensubsetsforcycles} (2).  Then by Proposition \ref{opensubsetsforcycles} (3) the mobility count of the family sum is the same as the mobility count of the subscheme $U \times T \cup W \times S$.
\end{proof}

\begin{cor} \label{mobilitydecompositioncor}
Let $X$ be an integral projective variety and let $p: U \to W$ be a family of effective $k$-cycles.  Let $p_{i}: U_{i} \to W$ denote the irreducible components of $U$.  Then $\mc(p) \leq \sum_{i} \mc(p_{i})$.
\end{cor}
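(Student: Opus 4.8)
The plan is to reduce the statement directly to the family-sum lemma (Lemma \ref{familysummc}) by comparing the family $p: U \to W$ with the family sum of its irreducible components. First I would recall that the irreducible components $p_{i}: U_{i} \to W$ each come with a coefficient $a_{i}$, and that the ambient subset $U \subset W \times X$ is set-theoretically the union $\bigcup_{i} U_{i}$. The mobility count in Definition \ref{mcdefn} depends only on the underlying subset of $W \times X$ (the coefficients play no role in whether the map $U \times_{W} \cdots \times_{W} U \to X \times \cdots \times X$ is dominant), so $\mc(p)$ equals the mobility count of the subscheme $U = \bigcup_{i} U_{i} \subset W \times X$.

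Next I would build the family sum $q: V \to W \times \cdots \times W$ of the families $p_{1}, \dots, p_{r}$ (iterating the two-term construction of Lemma \ref{familysummc}), so that $\mc(q) = \sum_{i} \mc(p_{i}) = \sum_{i} \mc(p_{i})$. The key geometric observation is that every fiber of $p$ over a closed point $w \in W$ is the set $\bigcup_{i} (U_{i})_{w}$, and this set is visibly contained in a fiber of $q$: namely the fiber of $V$ over the point $(w, w, \dots, w)$ in the diagonal of $W \times \cdots \times W$, whose underlying set is exactly $\bigcup_{i}(U_{i})_{w}$ as a subset of $X$. Hence the hypotheses of Lemma \ref{fibercontainedmc} are satisfied with $p$ in the role of $p$ and $q$ in the role of $q$, giving $\mc(p) \leq \mc(q) = \sum_{i} \mc(p_{i})$.

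The only mild technical point — and the place I would be most careful — is matching conventions: Lemma \ref{familysummc} is phrased for closed subschemes $U \subset W \times X$, whereas here $p: U \to W$ is a family of effective cycles, so $W$ may need to be shrunk and the flattening of Proposition \ref{opensubsetsforcycles}(2)--(3) invoked to know that passing to the reduced subscheme $\bigcup_i U_i$ and back does not change $\mc$; this is exactly the maneuver already carried out at the end of the proof of Lemma \ref{familysummc}, so it transfers without difficulty. One should also note that the diagonal point $(w,\dots,w)$ need not lie over the open locus where the iterated family sum $q$ is literally flat, but Lemma \ref{fibercontainedmc} only requires set-theoretic containment of fibers over closed points of the (possibly larger) base, so this causes no trouble. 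I do not expect any serious obstacle; the corollary is essentially a repackaging of Lemma \ref{familysummc} together with Lemma \ref{fibercontainedmc}.
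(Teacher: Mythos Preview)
Your proposal is correct and is essentially the paper's own argument: form the family sum $q$ of the $p_{i}$, apply Lemma \ref{fibercontainedmc} via the diagonal fiber containment to get $\mc(p)\leq\mc(q)$, and apply Lemma \ref{familysummc} to get $\mc(q)=\sum_{i}\mc(p_{i})$. The technical worries you flag are handled exactly as you suggest (and as in the last paragraph of the proof of Lemma \ref{familysummc}), so nothing further is needed.
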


\begin{proof}
Let $q: S \to T$ denote the family sum of the $p_{i}$.  By Lemma \ref{fibercontainedmc} we have $\mc(p) \leq \mc(q)$; by Lemma \ref{familysummc} $\mc(q) = \sum_{i} \mc(p_{i})$.
\end{proof}

\subsection{Families of divisors}

We next analyze families of effective divisors.  The goal is to find bounds on the mobility count that depend only on the numerical class of the divisor.  The key result is Corollary \ref{basicvarestimate}, which is the base case of inductive arguments used in the following sections.

\begin{prop} \label{divisorrestriction}
Let $X$ be an integral projective variety of dimension $n$.  Let $\alpha \in N_{n-1}(X)_{\mathbb{Z}}$ and suppose that $A$ is a very ample divisor such that $\alpha - [A]$ is not pseudo-effective.  Then for a general element $H \in |A|$
\begin{equation*}
\mc(\alpha) \leq \mc_{H}(\alpha \cdot H).
\end{equation*}
\end{prop}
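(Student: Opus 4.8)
The plan is to compare the mobility count of $\alpha$ with that of its restriction to a general hyperplane $H \in |A|$ by intersecting the family with $H$. Let $p\colon U \to W$ be a family of effective divisors on $X$ realizing $\mc(\alpha)$, and write $b = \mc(\alpha)$. After shrinking $W$ I may assume the members of $p$ are prime divisors (using Lemma \ref{ignorecomponents} and Corollary \ref{mobilitydecompositioncor}, one reduces to a component with the full mobility count), and in particular that no member of the family contains $H$ for general $H$ — this is exactly where the hypothesis that $\alpha - [A]$ is not pseudo-effective enters, since if every member of the family contained a fixed ample-or-better divisor then $\alpha$ would dominate $[A]$. First I would make precise the family of restrictions: for general $H \in |A|$, intersecting the members of $p$ with $H$ produces (via the intersection construction listed in Section \ref{cyclefamilysection}, together with Construction \ref{equidimsubschemeconstr}) a family $p_H\colon U_H \to W_H$ of effective $(n-2)$-cycles on $H$, each member of which is the cycle-theoretic intersection of a member of $p$ with $H$, hence has class $\alpha|_H \cdot H = \alpha \cdot H$ in $N_{n-2}(H)$.

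The heart of the argument is the dimension/genericity bookkeeping showing $\mc(p_H) \geq b$ for general $H$. Choose $b$ general points $x_1,\dots,x_b$ of $H$; since $H$ is general these are also general points of $X$, so by definition of $b = \mc(p)$ there is a member $D$ of $p$ through all of them. Then $x_1,\dots,x_b \in D \cap H$, and $D \cap H$ is a member of $p_H$ (using that $D \not\supseteq H$, so the intersection is a genuine $(n-2)$-cycle rather than all of $H$). The subtlety is to arrange the quantifiers correctly: I want, for a single general $H$, that $b$ general points of $H$ lie on a common member of $p_H$. The clean way is an incidence-variety argument. Form the incidence correspondence over $|A| \times X^{\times b}$ parametrizing $(H, x_1,\dots,x_b, D)$ with $D$ a member of $p$ passing through the $x_i$ and $x_i \in H$; projecting to $|A| \times X^{\times b}$ and using that the fiber over general $(x_1,\dots,x_b)$ is nonempty (that's $\mc(p) \geq b$) plus the fact that a general point lies on a general hyperplane, a dimension count shows the composite map to $|A| \times H^{\times b}$ (via $(H,x_1,\dots,x_b)$) dominates; so for general $H$ the $b$-fold fiber product $U_H \times_{W_H}\cdots\times_{W_H} U_H \to H^{\times b}$ is dominant, i.e. $\mc(p_H) \geq b$.

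Finally, $\mc_H(\alpha \cdot H) \geq \mc(p_H) \geq b = \mc(\alpha)$ by definition of $\mc_H$ as the supremum over families on $H$ of class $\alpha \cdot H$, which is the desired inequality. The main obstacle I anticipate is the genericity interchange in the previous paragraph — ensuring that "general points of $X$ lying on a common member, restricted to a general $H$" can be upgraded to "general points of a general fixed $H$ lying on a common restricted member" — which requires being careful that the locus in $X^{\times b}$ of bad $b$-tuples, when intersected with $H^{\times b}$ for general $H$, stays a proper subset; this follows from a standard dimension argument on the incidence variety but must be set up so that flatness and equidimensionality of the restricted family (needed to invoke Construction \ref{equidimsubschemeconstr} and to keep the restricted objects honest families of cycles) hold over an open dense subset of $W_H$. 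A secondary technical point is checking that for general $H$ no component of a general member $D$ is contained in $H$, so that the cycle-theoretic restriction behaves well; this again uses the non-pseudo-effectivity hypothesis together with the fact that a general member of the family is not supported on any fixed subvariety.
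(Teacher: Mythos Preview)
Your overall strategy---intersect the family with $H$ and compare mobility counts---matches the paper's, but you miss the key simplification and introduce an unnecessary error along the way.

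First, the reduction to prime members via Corollary \ref{mobilitydecompositioncor} does not work as stated: that corollary gives $\mc(p) \leq \sum_{i} \mc(p_{i})$, which does \emph{not} imply that a single component $p_{i}$ achieves the full mobility count. Fortunately this reduction is unnecessary. The hypothesis that $\alpha - [A]$ is not pseudo-effective already forces \emph{every} member $D$ of the family (not just a general one) to avoid containing $H$: if $H \leq D$ then $D - H$ is effective, so $\alpha - [A]$ would be effective.

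Second, and more importantly, the quantifier interchange you flag as the ``main obstacle'' has a one-line resolution that your incidence-variety sketch obscures. After applying Lemma \ref{goodfamilymodification} to make $s: U \to X$ projective and $W$ normal, the dominant map $U^{\times_{W} b} \to X^{\times b}$ is actually \emph{surjective}. Surjectivity is preserved by base change, so
\[
(U \times_{X} H)^{\times_{W} b} \;\cong\; (U^{\times_{W} b}) \times_{X^{\times b}} H^{\times b} \;\longrightarrow\; H^{\times b}
\]
is surjective as well. This immediately gives $\mc_{H}(U_{H}) \geq b$ for the base-changed subscheme $U_{H} = U \times_{X} H$, with no genericity bookkeeping at all. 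Your ``dimension count'' is only a necessary condition for dominance, not a sufficient one, so as written the incidence argument has a gap; the projectivization step is exactly what closes it, and once you have surjectivity the incidence machinery becomes redundant.

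The paper then finishes by noting that $U_{H}$ has pure relative dimension $n-2$ over $W$ (since no fiber of $p$ contains $H$), so over an open subset of $W$ it agrees set-theoretically with the intersection family $p \cdot H$; Proposition \ref{opensubsetsforcycles}(3) lets one restrict without losing mobility count, giving $\mc_{H}(p \cdot H) \geq b$.
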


\begin{proof}
For a general $H \in |A|$ we have that $H$ is integral.  Let $p: U \to W$ be a family of effective $(n-1)$-cycles representing $\alpha$.  By Lemma \ref{goodfamilymodification} we may suppose that $s: U \to X$ is projective and $W$ is normal.  Set $b = \mc(p)$ so that
\begin{equation*}
U^{ \times_{W} b} \to X^{\times b}
\end{equation*}
is surjective.  Since surjectivity is preserved by base change, we see that the base change of $s$ to $H$ is still surjective, yielding a closed subset $U_{H} \subset W \times H$ with mobility count $\geq b$.  Note that since $\alpha - [A]$ is not pseudo-effective, no divisor in the family $p$ can contain $H$ in its support, so that $U_{H}$ has pure relative dimension $n-2$ over $W$.

Since no divisor in the family $p$ contains $H$ in its support, we have a well-defined intersection family $p \cdot H$ on $H$.  Over an open subset $W^{0}$ of the parameter space, this family coincides set-theoretically with the closed subset $U_{H}$.  By Proposition \ref{opensubsetsforcycles}.(3), the mobility count of $U_{H}$ does not change upon restriction to the open subset of the base, so that $\mc_{H}(p \cdot H) \geq b$.  The result follows by varying $p$ over all families representing $\alpha$.
\end{proof}

Applying this proposition inductively, we easily obtain:

\begin{cor} \label{basicvarestimate}
Let $X$ be an integral projective variety of dimension $n$ and let $\alpha \in N_{n-1}(X)_{\mathbb{Z}}$.  
\begin{enumerate}
\item Suppose that $A$ is a very ample Cartier divisor on $X$ and $s$ is a positive integer such that $\alpha \cdot A^{n-1} < sA^{n}$.  Then
\begin{equation*}
\mc(\alpha) < s^{n} A^{n}.
\end{equation*}
\item Suppose $n \geq 2$.  Let $A$ and $H$ be very ample divisors and let $s$ be a positive integer such that $\alpha - [H]$ is not pseudo-effective and $\alpha \cdot A^{n-2} \cdot H < s A^{n-1} \cdot H$.  Then
\begin{equation*}
\mc(\alpha) < s^{n-1} A^{n-1} \cdot H.
\end{equation*}
\end{enumerate}
\end{cor}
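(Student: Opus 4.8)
The plan is to prove part (1) by induction on $n=\dim X$, with Proposition \ref{divisorrestriction} driving the inductive step, and then to deduce part (2) from part (1) by one further application of that proposition (so part (2) invokes part (1) but there is no joint induction). For the base case $n=1$ of part (1), $X$ is an integral curve and $\alpha$ is a $0$-cycle class, with $\alpha\cdot A^{0}=\deg\alpha$ and $A^{1}=\deg A$, so the hypothesis is $\deg\alpha<s\deg A$. I would argue that $\mc(\alpha)\le\deg\alpha$ when $\alpha$ is effective and $\mc(\alpha)=0$ otherwise: indeed, if a family $p\colon U\to W$ of effective $0$-cycles representing $\alpha$ has $\mc(p)\ge b$, then $b$ general (hence distinct) points of $X$ all lie in the support of a single fiber cycle, which consists of at most $\deg\alpha$ points, forcing $b\le\deg\alpha$. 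Either way $\mc(\alpha)<s\deg A=s^{n}A^{n}$.

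For the inductive step of part (1) ($n\ge2$) the main task is to produce the hypothesis of Proposition \ref{divisorrestriction}. The class $A^{n-1}\in N^{n-1}(X)$ is nef, since it pairs non-negatively with every effective divisor and hence with $\Eff_{n-1}(X)$; so $\alpha\cdot A^{n-1}<sA^{n}$ gives $(\alpha-s[A])\cdot A^{n-1}<0$, and $\alpha-[sA]$ is not pseudo-effective. Since $sA$ is very ample, I would apply Proposition \ref{divisorrestriction} with $sA$ in the role of $A$: a general $H\in|sA|$ is integral of dimension $n-1$ (Bertini) and satisfies $\mc(\alpha)\le\mc_{H}(\alpha\cdot H)$ with $\alpha\cdot H\in N_{n-2}(H)_{\mathbb{Z}}$. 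I would then feed $\alpha\cdot H$ back into part (1) in dimension $n-1$ with the very ample divisor $A|_{H}$ and the same integer $s$: by the projection formula $(\alpha\cdot H)\cdot(A|_{H})^{n-2}=s\,(\alpha\cdot A^{n-1})<s\cdot sA^{n}=s\,(A|_{H})^{n-1}$, where I have used $(A|_{H})^{n-1}=A^{n-1}\cdot(sA)=sA^{n}$, so the inductive hypothesis gives $\mc_{H}(\alpha\cdot H)<s^{n-1}(A|_{H})^{n-1}=s^{n}A^{n}$ and hence $\mc(\alpha)<s^{n}A^{n}$.

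For part (2) the assumption that $\alpha-[H]$ is not pseudo-effective is exactly what allows Proposition \ref{divisorrestriction} to be applied directly with the divisor $H$: a general $H'\in|H|$ is integral of dimension $n-1$ and $\mc(\alpha)\le\mc_{H'}(\alpha\cdot H')$ with $\alpha\cdot H'\in N_{n-2}(H')_{\mathbb{Z}}$. The projection formula rewrites the hypothesis $\alpha\cdot A^{n-2}\cdot H<sA^{n-1}\cdot H$ as $(\alpha\cdot H')\cdot(A|_{H'})^{n-2}<s\,(A|_{H'})^{n-1}$, so part (1) applied on $H'$ with $A|_{H'}$ very ample and the same $s$ (this reduces to the base case $n=1$ when $n=2$) gives $\mc_{H'}(\alpha\cdot H')<s^{n-1}(A|_{H'})^{n-1}=s^{n-1}A^{n-1}\cdot H$, and therefore $\mc(\alpha)<s^{n-1}A^{n-1}\cdot H$.

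I do not anticipate a real obstacle, since the corollary is essentially iterated use of Proposition \ref{divisorrestriction}. The two points to watch are that one should feed $sA$, rather than $A$, into Proposition \ref{divisorrestriction} so that its non-pseudo-effectivity hypothesis holds, and that one must carry the intersection numbers and linear systems correctly through the successive restrictions to general hyperplane sections.
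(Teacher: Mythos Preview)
Your proposal is correct and matches the paper's own approach: the paper simply says ``Applying this proposition inductively, we easily obtain'' Corollary~\ref{basicvarestimate}, and your write-up is precisely the intended induction, including the key observation that one should feed $sA$ (not $A$) into Proposition~\ref{divisorrestriction} in part (1), and then deduce part (2) by a single application of Proposition~\ref{divisorrestriction} with $H$ followed by part (1) on the hyperplane section.
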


%\begin{proof}
%(1) The proof is by induction on the dimension of $X$.  The statement is clear if $X$ is a curve, since an effective family of points of degree $d$ can vary in at most a dimension $d$ family.  In general, note that $\alpha - s[A]$ is not pseudo-effective since it has negative intersection against the class $[A^{n-1}]$.  Note also that the numerical condition on $\alpha$, $A$, and $s$ is preserved by restriction to a general element $A' \in |sA|$.  Thus by Proposition \ref{divisorrestriction} and the induction hypothesis
%\begin{equation*}
%\mc(\alpha) < s^{n-1} (A|_{A'})^{n-1} = s^{n}A^{n}. 
%\end{equation*}

%(2) Apply Proposition \ref{divisorrestriction} to $H$, then apply (1) to $\alpha$ and $A$ restricted to $H$.
%\end{proof}

An easy induction using long exact sequences yields a similar statement for sections of line bundles.

\begin{lem} \label{basicsectionestimate}
Let $X$ be an equidimensional projective variety of dimension $n$ and let $A$ be a very ample Cartier divisor on $X$.  Then
\begin{equation*}
h^{0}(X,\mathcal{O}_{X}(A)) \leq (n+1)A^{n}.
\end{equation*}
\end{lem}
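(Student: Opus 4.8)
### Proof proposal for Lemma \ref{basicsectionestimate}

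The plan is to induct on $n = \dim X$, using a general hyperplane section to drop the dimension and a short exact sequence to control the jump in $h^0$.

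For the base case $n = 0$, $X$ is a finite set of reduced points (since $A$ is very ample, hence $X$ is embedded in projective space, so the components are reduced points), say $d$ of them; then $h^0(X,\mathcal{O}_X(A)) = d = A^0 \cdot d$, and here $A^0$ should be interpreted as the $0$-cycle $[X]$ of degree $d$, so $h^0 = A^0 = (0+1)A^0$, giving the bound. (One can also just start the induction at $n = 1$: for a curve $X$ with $A$ very ample, $h^0(X,\mathcal{O}_X(A)) \le \deg(A) + (\text{number of components}) \le 2 A \cdot [X] = 2A^1$ by a standard Riemann--Roch / exact-sequence count on the normalization, so the claimed $(n+1)A^n = 2A^1$ holds.)

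For the inductive step, pick a general $H \in |A|$. Since $A$ is very ample and $X$ is equidimensional of dimension $n$, Bertini guarantees that $H$ is again equidimensional of dimension $n-1$ and that $A|_H$ is very ample on $H$; moreover no component of $X$ is contained in $H$, so we have the short exact sequence of sheaves on $X$
\begin{equation*}
0 \to \mathcal{O}_X \to \mathcal{O}_X(A) \to \mathcal{O}_H(A) \to 0,
\end{equation*}
where I abbreviate $\mathcal{O}_H(A|_H)$ by $\mathcal{O}_H(A)$. Taking the long exact sequence in cohomology gives
\begin{equation*}
h^0(X,\mathcal{O}_X(A)) \le h^0(X,\mathcal{O}_X) + h^0(H,\mathcal{O}_H(A)).
\end{equation*}
Now $h^0(X,\mathcal{O}_X)$ is the number of connected components of $X$, which is at most the number of irreducible components, which in turn is at most $A^n$ (each irreducible component $X_i$ of dimension $n$ satisfies $A^n \cdot [X_i] \ge 1$ since $A$ is very ample, hence ample, on $X_i$, and $A^n = \sum_i m_i\, A^n \cdot [X_i]$ over the components $X_i$ with positive multiplicities $m_i \ge 1$). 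By the inductive hypothesis applied to $H$ (equidimensional of dimension $n-1$, with $A|_H$ very ample), $h^0(H,\mathcal{O}_H(A)) \le n \cdot (A|_H)^{n-1} = n\, A^{n-1}\cdot H = n\, A^n$. Combining, $h^0(X,\mathcal{O}_X(A)) \le A^n + n A^n = (n+1)A^n$, completing the induction.

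The main point requiring care — the "obstacle," though a mild one — is the bookkeeping on reducible/non-reduced $X$: one must check that a general $H \in |A|$ really does meet every component of $X$ properly (so the sequence above is exact, i.e.\ multiplication by the section cutting out $H$ is injective on $\mathcal{O}_X$), that $H$ stays equidimensional so the inductive hypothesis applies, and that the count $h^0(X,\mathcal{O}_X) \le A^n$ survives in the non-reduced case. The first two are standard consequences of very ampleness (Bertini, plus the fact that a general hyperplane avoids the finitely many generic points of $X$); the last follows because $A^n = \sum_i m_i (A|_{X_i})^{n}$ summed over the $n$-dimensional irreducible components $X_i$ with their multiplicities $m_i \ge 1$, and each $(A|_{X_i})^n \ge 1$ by ampleness, while the number of connected components of $X$ is bounded by the number of such $X_i$ (every connected component contains at least one $n$-dimensional component since $X$ is equidimensional).
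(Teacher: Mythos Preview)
Your approach is exactly what the paper has in mind: the paper only says ``an easy induction using long exact sequences yields a similar statement,'' and your short exact sequence $0 \to \mathcal{O}_X \to \mathcal{O}_X(A) \to \mathcal{O}_H(A) \to 0$ together with the induction hypothesis is precisely that.

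There is one small gap in your bookkeeping for the non-reduced case. You write that $h^{0}(X,\mathcal{O}_X)$ equals the number of connected components of $X$, but this fails when $X$ is non-reduced: for instance $X = \mathbb{P}^{1}_{k[\epsilon]/(\epsilon^{2})}$ is connected yet has $h^{0}(\mathcal{O}_X) = 2$. The inequality $h^{0}(X,\mathcal{O}_X) \leq A^{n}$ you actually need is still true (in the example, $A^{1} = 2$), but the argument you give does not establish it. One clean fix is to run the same hyperplane-section induction on $h^{0}(\mathcal{O}_X)$ itself, using $0 \to \mathcal{O}_X(-A) \to \mathcal{O}_X \to \mathcal{O}_H \to 0$: a general $H \in |A|$ avoids all associated points of $X$, so multiplication by the defining section is injective, and one reduces to $h^{0}(\mathcal{O}_H) \leq (A|_H)^{n-1} = A^{n}$ once one checks $H^{0}(\mathcal{O}_X(-A)) = 0$. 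Alternatively, since the lemma is only ever applied in the paper to reduced schemes (to the integral $X$ and to supports of cycles in Proposition~\ref{mcbound}), your argument as written already suffices for those applications.
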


%\begin{proof}
%Computing the cohomology of the exact sequence
%\begin{equation*}
%0 \to \mathcal{O}_{X}(-A) \to \mathcal{O}_{X} \to \mathcal{O}_{A} \to 0
%\end{equation*}
%shows by induction on dimension that $h^{0}(X,\mathcal{O}_{X}) \leq A^{n}$.  Computing the cohomology of the exact sequence
%\begin{equation*}
%0 \to \mathcal{O}_{X} \to \mathcal{O}_{X}(A) \to \mathcal{O}_{A}(A) \to 0
%\end{equation*}
%gives the desired statement by induction on dimension.
%\end{proof}

\section{The mobility function} \label{mobilitysection}

As suggested by \cite{delv11}, we will define the mobility of a class $\alpha \in N_{k}(X)_{\mathbb{Z}}$ to be the asymptotic growth rate of the number of general points contained in cycles representing multiples of $\alpha$.  We prove that big classes are precisely those with positive mobility, confirming \cite[Conjecture 6.5]{delv11}.

Recall that by Convention \ref{dimconv} we only consider $k$-cycles for $0 \leq k < \dim X$.  The first step is:

\begin{prop} \label{mcbound}
Let $X$ be an integral projective variety of dimension $n$ and let $\alpha \in N_{k}(X)_{\mathbb{Z}}$.  Fix a very ample divisor $A$ and choose a positive constant $c<1$ so that $h^{0}(X,mA) \geq \lfloor c m^{n} \rfloor$ for every positive integer $m$.  Then any family $p: U \to W$ representing $\alpha$ has
\begin{equation*}
\mc(p) \leq (n+1)2^{n} \left(\frac{2(k+1)}{c} \right)^{\frac{n}{n-k}} (\alpha \cdot A^{k})^{\frac{n}{n-k}}A^{n}.
\end{equation*}
In particular, there is some constant $C$ so that $\mc(m\alpha) \leq Cm^{\frac{n}{n-k}}$.
\end{prop}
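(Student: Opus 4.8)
The plan is to bound $\mc(p)$ by enveloping every effective cycle of class $\alpha$ inside a divisor lying in one fixed complete linear system $|\ell A|$, and then counting points using that linear system. The naive alternative — intersecting the family with general hyperplanes and inducting on $\dim X$ — does not work: restricting a family of $k$-cycles to a general $H\in|A|$ can create fibers of dimension $k$ (components of the cycles that happen to lie in $H$), and discarding these changes the mobility count. So instead I would pass \emph{up}. The key point is that a cycle of class $\alpha$ has fixed $A$-degree $D:=\alpha\cdot A^{k}$, so for $\ell$ large — depending only on $D$, $c$, $n$, $k$ — every such cycle is cut out set-theoretically by a section of $\mathcal{O}_{X}(\ell A)$, independently of the family.

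To make this uniform I would first use that each component $U_{i}$ of $U$ is flat over $W$ (built into Definition \ref{familydef}), so every cycle-theoretic fiber $Z_{w}=\sum_{i}a_{i}V_{i}(w)$ has $\deg_{A}(Z_{w})=D$; since each $a_{i}\ge 1$, the reduced subscheme $\Supp(Z_{w})=\bigcup_{C}C$, a union of integral $k$-dimensional subvarieties, satisfies $\sum_{C}\deg_{A}(C)\le D$.

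Next I would take $\ell$ to be the least positive integer with $c\ell^{n-k}\ge 2(k+1)D$, which (since $2(k+1)D/c>1$) also satisfies $\ell\le 2\bigl(2(k+1)D/c\bigr)^{1/(n-k)}$. For a closed point $w$, Lemma \ref{basicsectionestimate} applied to each component $C$ gives $h^{0}(C,\ell A|_{C})\le (k+1)(\ell A|_{C})^{k}=(k+1)\ell^{k}\deg_{A}(C)$; since $\mathcal{O}_{\Supp(Z_{w})}$ embeds in $\bigoplus_{C}\mathcal{O}_{C}$, summing gives $h^{0}(\Supp(Z_{w}),\ell A)\le (k+1)\ell^{k}D\le \tfrac{1}{2}c\ell^{n}$. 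Comparing with $h^{0}(X,\ell A)\ge\lfloor c\ell^{n}\rfloor$, the restriction map $H^{0}(X,\mathcal{O}_{X}(\ell A))\to H^{0}(\Supp(Z_{w}),\mathcal{O}(\ell A))$ has nonzero kernel; since $\Supp(Z_{w})$ is a proper subvariety of the integral variety $X$, a nonzero kernel element is the defining section of a genuine divisor $E_{w}\in|\ell A|$ with $E_{w}\supseteq\Supp(Z_{w})$.

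To finish: if $\mc(p)=b$, then by Definition \ref{mcdefn} a general $b$-tuple $(x_{1},\dots,x_{b})\in X^{\times b}$ has all its entries in $\Supp(Z_{w})$ for a single $w\in W$, hence all in $E_{w}$. Thus a general $b$-tuple of $X$ lies on a member of the very ample system $|\ell A|$, which forces $b\le h^{0}(X,\ell A)-1\le (n+1)(\ell A)^{n}=(n+1)\ell^{n}A^{n}$, again by Lemma \ref{basicsectionestimate}; substituting the bound on $\ell$ yields the displayed inequality, and applying it to $m\alpha$ (so $D\mapsto mD$) gives $\mc(m\alpha)\le Cm^{n/(n-k)}$. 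I do not expect a serious obstacle: the argument is formal given Lemma \ref{basicsectionestimate}. The one point needing care is the uniformity in the third step — that $h^{0}(\Supp(Z_{w}),\ell A)\le(k+1)\ell^{k}D$ holds for \emph{every} member of the family, which is exactly where flatness enters — together with a handful of degenerate cases where the estimate $\tfrac{1}{2}c\ell^{n}>1$ is tight (e.g. $k=0$, $D=1$, where $\mc(p)\le1$ anyway and the claimed bound is trivially at least $1$), which can be disposed of by hand.
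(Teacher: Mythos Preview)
Your proposal is correct and follows essentially the same route as the paper: bound $h^{0}$ on the support of each cycle by Lemma~\ref{basicsectionestimate}, compare with the lower bound $h^{0}(X,\ell A)\ge\lfloor c\ell^{n}\rfloor$ to find a divisor in $|\ell A|$ containing every member, and then invoke the containment principle (Lemma~\ref{fibercontainedmc}) together with $\mc(|\ell A|)=h^{0}-1$ and Lemma~\ref{basicsectionestimate} again. The only cosmetic difference is that the paper phrases the key estimate as a lower bound on $h^{0}(X,\mathcal{I}_{Z}(dA))$ rather than an upper bound on $h^{0}(\Supp Z,\ell A)$, and writes down the real number $d=(2(k+1)(\alpha\cdot A^{k})/c)^{1/(n-k)}$ directly rather than your least-integer $\ell$.
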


We will develop a bound that does not depend on the constant $c$ in Theorem \ref{mobprecisebound}.

\begin{proof}
By Lemma \ref{basicsectionestimate}, the support $Z$ of any effective cycle representing $\alpha$ satisfies
\begin{equation*}
h^{0}(X,\mathcal{I}_{Z}(dA)) \geq \lfloor c d^{n} \rfloor - (k+1)d^{k}(\alpha \cdot A^{k})
\end{equation*}
for any positive integer $d$.  Thus an effective cycle representing $\alpha$ is set-theoretically contained in an element of $|\lceil d \rceil A|$ as soon as $d$ is sufficiently large to make the right hand side greater than $1$, and in particular, for
\begin{equation*}
d = \left( \frac{2(k+1)}{c} \right)^{\frac{1}{n-k}} (\alpha \cdot A^{k})^{\frac{1}{n-k}}.
\end{equation*}
Let $q: \tilde{U} \to \mathbb{P}(| \lceil d \rceil A|)$ denote the family of divisors defined by the linear series.  By Lemma \ref{fibercontainedmc} we have $\mc(p) \leq \mc(q)$.  Since $c<1$, $d \geq 1$ so that $\lceil d \rceil < 2d$.  Applying Lemma \ref{basicsectionestimate} again, Example \ref{mcdiv} indicates that
\begin{equation*}
\mc(p) \leq h^{0}(X,\lceil d \rceil A) -1 < (n+1)2^{n} \left(\frac{2(k+1)}{c} \right)^{\frac{n}{n-k}} (\alpha \cdot A^{k})^{\frac{n}{n-k}}A^{n}.
\end{equation*}
\end{proof}

Furthermore, one easily verifies a growth rate $\mc(m\alpha) \geq \lfloor Cm^{\frac{n}{n-k}}\rfloor$ is always achieved by a complete intersection of ample divisors.  Proposition \ref{mcbound} thus indicates that we should make the following definition.

\begin{defn} \label{mobdefn}
Let $X$ be an integral projective variety and let $\alpha \in N_{k}(X)_{\mathbb{Z}}$.  The mobility of $\alpha$ is
\begin{equation*}
\mob_{X}(\alpha) = \limsup_{m \to \infty} \frac{\mc(m\alpha)}{m^{n/n-k}/n!}.
\end{equation*}
We will omit the subscript $X$ when the ambient variety is clear from the context.  We define the rational mobility $\ratmob(\alpha)$ in an analogous way using $\rmc$.
\end{defn}

The coefficient $n!$ is justified by Section \ref{p3mobility}.  We verify in Example \ref{divisormob} that the mobility agrees with the volume function for Cartier divisors on a smooth integral projective variety.

%\begin{rmk}
%There is another way to generalize the volume function for divisors.  If one interprets $h^{0}(X,\mathcal{O}_{X}(mL))$ as measuring ``how much $mL$ deforms'', it is natural to consider the asymptotic behavior of the dimension of $\Chow(X,m\alpha)$, the space parametrizing all effective cycles on $X$ with numerical class $m\alpha$.  This alternative is studied in \cite{lehmann14}.
%\end{rmk}

%\begin{exmple} \label{mobzerocycles}
%Let $X$ be an integral projective variety of dimension $n$ and let $\alpha \in N_{0}(X)$ be the class of a point.   Then $\mc(m\alpha) = m$, so that the mobility of the point class is $n!$.  Rational mobility is more interesting; we will analyze it in more detail in Section \ref{ratmob0cycles}.
%\end{exmple}

\subsection{Basic properties} We now turn to the basic properties of the mobility function.

\begin{lem} \label{rescalingmob}
Let $X$ be an integral projective variety and let $\alpha \in N_{k}(X)_{\mathbb{Z}}$.  Fix a positive integer $a$.  Then $\mob(a\alpha) = a^{\frac{n}{n-k}}\mob(\alpha)$ (and similarly for $\ratmob$).
\end{lem}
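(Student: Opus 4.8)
The plan is to prove $\mob(a\alpha) = a^{\frac{n}{n-k}}\mob(\alpha)$ by applying Lemma \ref{lazlemma} to the function $f(m) := \mc(m\alpha)$, with the exponent taken to be $k := \frac{n}{n-k}$. To do this I first need to verify that $f$ satisfies the hypothesis of that lemma, namely: if $r,s \in \mathbb{N}$ with $f(r) > 0$, then $f(r+s) \geq f(s)$. The point is that the mobility count is ``monotone under adding effective classes'': given a family of effective cycles representing $s\alpha$ with mobility count $b$, and any family representing $r\alpha$ (which exists and has nonnegative mobility count since $f(r) \geq 0$, and in fact is nonempty because $\mc(r\alpha) > 0$ forces $r\alpha$ to be effective), one forms the family sum and invokes Lemma \ref{familysummc}, which gives $\mc((r+s)\alpha) \geq \mc(p_1) + \mc(p_2) \geq \mc(s\alpha)$. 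One must be slightly careful that a family representing $s\alpha$ realizing $\mc(s\alpha)$ actually exists — this follows because $\mc(s\alpha)$ is by definition the largest mobility count among families of effective cycles representing $s\alpha$, and if no such family exists one interprets $\mc(s\alpha) = 0$ and the inequality is trivial.

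Once the hypothesis of Lemma \ref{lazlemma} is checked, the lemma immediately yields that the function $g(r) := \limsup_{m \to \infty} \frac{\mc(mr \cdot \alpha)}{m^{n/(n-k)}}$ satisfies $g(cr) = c^{n/(n-k)} g(r)$ for all $c, r \in \mathbb{N}$. Unwinding the definition of $\mob$, we have $\mob(r\alpha) = \frac{1}{n!} g(r)$ (the constant $1/n!$ just passes through the $\limsup$), so $g(cr) = c^{n/(n-k)} g(r)$ translates directly into $\mob(cr\alpha) = c^{n/(n-k)} \mob(r\alpha)$. Taking $r = 1$, $c = a$ gives the desired identity $\mob(a\alpha) = a^{n/(n-k)} \mob(\alpha)$. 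The statement for $\ratmob$ is identical: one only needs that Lemma \ref{familysummc} respects rational families, i.e. that the family sum of two rational families is again a rational family (the cycle-theoretic fibers are sums of fibers lying in fixed rational classes, hence themselves lie in a fixed rational class), so the same monotonicity argument applies to $\rmc$.

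The main (and really only) obstacle is the verification of the monotonicity hypothesis $f(r+s) \geq f(s)$, and within that the bookkeeping point that the family sum construction of Lemma \ref{familysummc} genuinely produces a family of effective cycles representing $(r+s)\alpha$ whose members contain the union of the supports of a member of the $r\alpha$-family and a member of the $s\alpha$-family — so that points imposed on the one can be supplemented by points imposed on the other. This is exactly what Lemma \ref{familysummc} packages, so no new argument is needed; the only subtlety is ensuring an effective family representing $r\alpha$ exists, which is guaranteed by $f(r) = \mc(r\alpha) > 0 > -\infty$ forcing $r\alpha$ to be an effective class. Everything else is a direct citation of Lemma \ref{lazlemma} and a routine rescaling of constants.
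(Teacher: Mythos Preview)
Your proposal is correct and follows exactly the same approach as the paper: verify the monotonicity hypothesis of Lemma~\ref{lazlemma} via the additivity of mobility counts under family sums (Lemma~\ref{familysummc}), then apply Lemma~\ref{lazlemma} directly. Your write-up simply spells out the edge cases (e.g.\ when $s\alpha$ admits no effective family) and the $\ratmob$ variant more explicitly than the paper does.
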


\begin{proof}
If $\mc(r\alpha) > 0$ then $r\alpha$ is represented by a family of effective cycles.  Thus $\mc((r+s)\alpha) \geq \mc(s\alpha)$ for any positive integer $s$ by the additivity of mobility count under family sums as in Lemma \ref{familysummc}.  Conclude by Lemma \ref{lazlemma}.
\end{proof}

Lemma \ref{rescalingmob} allows us to extend the definition of mobility to any $\mathbb{Q}$-class by homogeneity, obtaining a function $\mob: N_{k}(X)_{\mathbb{Q}} \to \mathbb{R}_{\geq 0}$.

\begin{exmple} \label{divisormob}
Let $X$ be a smooth projective integral variety of dimension $n$.  For any Cartier divisor $L$ on $X$ we have
\begin{equation*}
\ratmob([L]) = \mob([L]) = \vol(L).
\end{equation*}
To prove this, note first that by Example \ref{mcandvar} we have
\begin{equation*}
\rmc(m[L]) \leq \mc(m[L]) \leq \dim \Chow(X,m[L])
\end{equation*}
where $\Chow(X,m[L])$ is the locus of $\Chow(X)$ parametrizing divisors of class $m[L]$.  While this rightmost term may be greater than $h^{0}(X,\mathcal{O}_{X}(mL))-1$, the difference is bounded by a polynomial of degree $n-1$ in $m$ as in \cite[Proposition 2.2.43]{lazarsfeld04}.  By taking asymptotics, we find that $\ratmob([L]) \leq \mob([L]) \leq \vol(L)$.  In particular, if $L$ is not big then we must have equalities everywhere.

Conversely, by Example \ref{mcdiv} we have $\rmc(m[L]) \geq h^{0}(X,\mathcal{O}_{X}(mL))-1$, so that taking asymptotics $\ratmob([L]) \geq \vol(L)$.
\end{exmple}

\begin{lem} \label{mobadditive}
Let $X$ be an integral projective variety.  Suppose that $\alpha, \beta \in N_{k}(X)_{\mathbb{Q}}$ are classes such that some positive multiple of each is represented by an effective cycle.  Then $\mob(\alpha + \beta) \geq \mob(\alpha) + \mob(\beta)$ (and similarly for $\ratmob$).
\end{lem}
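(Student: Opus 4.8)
The plan is to reduce superadditivity of the mobility to the additivity of the mobility count under family sums (Lemma \ref{familysummc}), exactly as in the rescaling argument of Lemma \ref{rescalingmob}. First I would clear denominators: write $\alpha = \alpha_0/q$ and $\beta = \beta_0/q$ with $\alpha_0, \beta_0 \in N_k(X)_{\mathbb{Z}}$, and by homogeneity (Lemma \ref{rescalingmob}) it suffices to prove the inequality for integral classes $\alpha_0, \beta_0$ each of which has a positive multiple represented by an effective cycle. Fix positive integers $a, b$ such that $a\alpha_0$ and $b\beta_0$ are effective, and set $N$ to be a common multiple so that $N\alpha_0$ and $N\beta_0$ are both effective; after rescaling we may as well assume $\alpha_0$ and $\beta_0$ themselves are effective classes.

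Next I would observe that for any positive integer $m$ with both $m\alpha_0$ and $m\beta_0$ effective, given a family $p_1: U_1 \to W_1$ representing $m\alpha_0$ with $\mc(p_1) = \mc(m\alpha_0)$ and a family $p_2: U_2 \to W_2$ representing $m\beta_0$ with $\mc(p_2) = \mc(m\beta_0)$, the family sum represents $m(\alpha_0+\beta_0)$ and has mobility count $\mc(m\alpha_0) + \mc(m\beta_0)$ by Lemma \ref{familysummc}. Hence $\mc(m(\alpha_0+\beta_0)) \geq \mc(m\alpha_0) + \mc(m\beta_0)$ for all such $m$. Now divide by $m^{n/(n-k)}/n!$ and take $\limsup$ as $m \to \infty$. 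The only subtlety is that $\limsup(f+g) \geq \limsup f + \limsup g$ is false in general, so I cannot simply split the limit. This is the main obstacle, and I expect it to be handled in the standard way: pass to a subsequence $m_j$ along which $\mc(m_j\alpha_0)/(m_j^{n/(n-k)}/n!)$ converges to $\mob(\alpha_0)$, then pass to a further subsequence along which $\mc(m_j\beta_0)/(m_j^{n/(n-k)}/n!)$ converges (possible since it is a bounded sequence by Proposition \ref{mcbound}), and its limit is $\leq \mob(\beta_0)$ unless we additionally extract a subsequence realizing the $\limsup$ for $\beta_0$. The cleanest fix is to use the superadditivity in the form $f(m+m') \geq f(m)$ when $f(m')>0$ together with Lemma \ref{lazlemma}-type reasoning, or more directly: choose the subsequence $m_j$ computing $\mob(\alpha_0)$ and use that along \emph{any} subsequence $\liminf \mc(m_j\beta_0)/(m_j^{n/(n-k)}/n!)$ can be made arbitrarily close to $\mob(\beta_0)$ by an auxiliary diagonal extraction, since the mobility count is superadditive enough that the $\limsup$ is in fact a limit along a cofinal set.

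Concretely, I would phrase it as follows. Set $\delta = n/(n-k)$. Pick $\varepsilon > 0$. Choose $m_0$ with $\mc(m_0\beta_0) \geq (\mob(\beta_0)-\varepsilon)m_0^{\delta}/n!$ and $m_0\alpha_0, m_0\beta_0$ effective. For $m$ a large multiple of $m_0$, superadditivity of $\mc(\cdot\,\beta_0)$ under family sums gives $\mc(m\beta_0) \geq (m/m_0)\mc(m_0\beta_0) \geq (\mob(\beta_0)-\varepsilon)m \cdot m_0^{\delta-1}/n!$; this is weaker than what I want because $\delta > 1$, so linear-in-$m$ growth is not enough. The correct tool is Lemma \ref{lazlemma} applied to $f(m) = \mc(m\beta_0)$: it shows $g(m) := \limsup_{j} \mc(jm\beta_0)/j^{\delta}$ satisfies $g(cm) = c^{\delta}g(m)$, and in particular the $\limsup$ defining $\mob$ is achieved along the subsequence of all sufficiently divisible integers. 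So I would take $m_j = j! \cdot q$ (or any sequence each term of which divides the next), along which both $\mc(m_j\alpha_0)/(m_j^{\delta}/n!) \to \mob(\alpha_0)$ and $\mc(m_j\beta_0)/(m_j^{\delta}/n!) \to \mob(\beta_0)$ by the homogeneity from Lemma \ref{lazlemma}, and then the pointwise inequality $\mc(m_j(\alpha_0+\beta_0)) \geq \mc(m_j\alpha_0) + \mc(m_j\beta_0)$ passes to the limit to give $\mob(\alpha_0+\beta_0) \geq \mob(\alpha_0) + \mob(\beta_0)$. The identical argument with $\rmc$ in place of $\mc$ — noting that the family sum of two rational families is again a rational family, so Lemma \ref{familysummc} applies — yields the statement for $\ratmob$.
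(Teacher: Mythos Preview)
Your approach is exactly the paper's: rescale so that every positive multiple of $\alpha$ and of $\beta$ is represented by an effective cycle, then use Lemma \ref{familysummc} to get the pointwise inequality $\mc(m(\alpha+\beta)) \geq \mc(m\alpha) + \mc(m\beta)$ and pass to the $\limsup$. The paper's proof is a single line and simply asserts ``the conclusion follows'' after the pointwise inequality, without discussing the $\limsup$ issue you raise.

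Your worry that $\limsup(a_{m}+b_{m}) \geq \limsup a_{m} + \limsup b_{m}$ can fail is legitimate, but your proposed patch does not close it. Lemma \ref{lazlemma} says only that $g(r):=\limsup_{m} f(mr)/m^{\delta}$ satisfies $g(cr)=c^{\delta}g(r)$; equivalently, the $\limsup$ taken over the arithmetic progression $\{mr\}_{m}$ equals $r^{\delta}$ times the $\limsup$ over all $m$. This is a statement about the value of a $\limsup$, not a convergence statement, so it does \emph{not} imply that $\mc(m_{j}\alpha)/m_{j}^{\delta}$ and $\mc(m_{j}\beta)/m_{j}^{\delta}$ both converge along $m_{j}=j!$ (or any other fixed sequence). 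Your earlier attempt via linear superadditivity $\mc(m\beta)\geq (m/m_{0})\mc(m_{0}\beta)$ is, as you note yourself, too weak because $\delta>1$. So the resolution you sketch does not actually justify the final inequality; you end in the same position as the paper, having established the pointwise inequality but not the passage to $\mob$.
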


\begin{proof}
We may verify the inequality after rescaling $\alpha$ and $\beta$ by the same positive constant $c$.  Thus we may suppose that every multiple of each is represented by an effective cycle.  Using the additivity of mobility counts under family sums as in Lemma \ref{familysummc}, we see that
\begin{equation*}
\mc(m(\alpha + \beta)) \geq \mc(m\alpha) + \mc(m\beta)
\end{equation*}
and the conclusion follows.
\end{proof}

\begin{cor} \label{bigposcor}
Let $X$ be an integral projective variety and let $\alpha \in N_{k}(X)_{\mathbb{Q}}$ be a big class.  Then $\mob(\alpha) > 0$ (and similarly for $\ratmob$).
\end{cor}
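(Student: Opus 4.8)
\section*{Proof proposal for Corollary \ref{bigposcor}}

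The plan is to reduce positivity of $\mob$ on an arbitrary big $\mathbb{Q}$-class to the single case of a complete intersection of ample divisors, and then to deduce the general case from the superadditivity of $\mob$ under family sums (Lemma \ref{mobadditive}).

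First I would record the base case $\mob([A^{n-k}]) > 0$, where $A$ is a very ample divisor on $X$ and $[A^{n-k}] \in N_{k}(X)_{\mathbb{Z}}$ is the class of the complete intersection of $n-k$ general members of $|A|$. For each positive integer $l$ the class $l^{n-k}[A^{n-k}] = [(lA)^{n-k}]$ is again represented by a complete intersection of $n-k$ ample divisors, so the growth estimate recorded in the paragraph preceding Definition \ref{mobdefn} gives $\mc(l^{n-k}[A^{n-k}]) \geq \lfloor C l^{n} \rfloor$ for a fixed constant $C > 0$. Evaluating the $\limsup$ defining $\mob([A^{n-k}])$ along the subsequence $m = l^{n-k}$ then yields $\mob([A^{n-k}]) \geq n!\,C > 0$. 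Since all complete intersections of members of a fixed linear series $|lA|$ lie in one rational equivalence class, these are rational families, so $\ratmob([A^{n-k}]) > 0$ as well.

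Now let $\alpha \in N_{k}(X)_{\mathbb{Q}}$ be big, and fix a very ample $A$. Since the big cone is open, $\beta := \alpha - \epsilon [A^{n-k}]$ is still big for all sufficiently small $\epsilon \in \mathbb{Q}_{>0}$. A big class lies in the interior of $\Eff_{k}(X)$; as $\Eff_{k}(X)$ is full-dimensional by \cite{fl13}, that interior is contained in the convex cone generated by classes of effective cycles (for a convex set the interior is unchanged by passing to the closure), so $\beta$ is a finite nonnegative combination of effective integral classes. As $\beta$ is a rational class, these coefficients may be chosen rational, and hence some positive multiple of $\beta$ is represented by an effective cycle; the same is trivially true of $\epsilon[A^{n-k}]$. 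Applying Lemma \ref{mobadditive} to the decomposition $\alpha = \beta + \epsilon[A^{n-k}]$ and then the homogeneity of Lemma \ref{rescalingmob} gives
\[
\mob(\alpha) \;\geq\; \mob(\beta) + \mob(\epsilon[A^{n-k}]) \;\geq\; \epsilon^{\frac{n}{n-k}} \mob([A^{n-k}]) \;>\; 0 ,
\]
and the identical argument works for $\ratmob$, since Lemmas \ref{mobadditive} and \ref{rescalingmob} hold for it too.

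I expect the genuinely substantive point to be hidden in the base case: the assertion, quoted without proof in the paragraph before Definition \ref{mobdefn}, that a complete intersection of $n-k$ ample divisors of large degree can be made to pass through $\sim C\,m^{n/(n-k)}$ general points. Making this precise requires checking that a general complete intersection of $n-k$ members of a linear series through many general points is genuinely $k$-dimensional and that such cycles dominate the appropriate power of $X$, which needs some control of base loci and is delicate because the number of points grows with the degree; but only the crude order of magnitude (positivity, not the sharp constant $C$) is needed here. Everything else --- the decomposition $\alpha = \beta + \epsilon[A^{n-k}]$, the convex-geometry fact that a big class has an effective multiple, and the superadditivity and homogeneity of $\mob$ --- is routine.
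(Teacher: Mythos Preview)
Your proposal is correct and follows essentially the same route as the paper. The paper does not spell out a proof of Corollary \ref{bigposcor}; it is stated immediately after Lemma \ref{mobadditive}, and the intended argument is exactly the one you give: write a big class as $\epsilon[A^{n-k}]$ plus a class with an effective multiple, invoke the complete-intersection lower bound asserted just before Definition \ref{mobdefn}, and apply Lemmas \ref{rescalingmob} and \ref{mobadditive}. Your observation that complete-intersection families are rational, so the same argument covers $\ratmob$, is also the intended one.

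Two minor remarks. First, your reduction step ``a big $\mathbb{Q}$-class has an effective positive multiple'' is correct but the justification you wrote is a bit loose; the clean way is Carath\'eodory: since $\beta$ lies in the interior of the cone generated by effective integral classes, it is a nonnegative real combination of finitely many of them, and after reducing to a linearly independent subset the coefficients are forced to be rational because $\beta$ and the generators are. Second, the growth estimate $\mc(m\alpha)\geq \lfloor C m^{n/(n-k)}\rfloor$ is asserted in the paper for all $m$, so you do not actually need to pass to the subsequence $m=l^{n-k}$; but your version is of course still valid. Your caveat about the base case is apt: the paper simply asserts it as ``easily verified'' at this point, with a more explicit version appearing later as Example \ref{ciexample}.
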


\begin{thrm} \label{mobcontbigcone}
Let $X$ be an integral projective variety.  The function $\mob: N_{k}(X)_{\mathbb{Q}} \to \mathbb{R}_{\geq 0}$ is locally uniformly continuous on the interior of $\Eff_{k}(X)_{\mathbb{Q}}$ (and similarly for $\ratmob$).
\end{thrm}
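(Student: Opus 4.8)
The plan is to deduce this directly from the abstract continuity criterion in Lemma~\ref{easyconelem}, applied with $V = N_{k}(X)_{\mathbb{Q}}$, with $C$ the cone of rational points of $\Eff_{k}(X)$, with $f = \mob$, and with exponent $c = \frac{n}{n-k}$ where $n = \dim X$. Throughout, ``interior'' of $C$ is read as the set of rational points of the interior of the real cone $\Eff_{k}(X)$. First I would record that $C$ has the shape required by Lemma~\ref{easyconelem}: $\Eff_{k}(X)$ is a salient full-dimensional closed convex cone by \cite[Theorem 0.2]{fl13}. I would also note at the outset that $\mob$ is genuinely $\mathbb{R}_{\geq 0}$-valued on $N_{k}(X)_{\mathbb{Q}}$, since Proposition~\ref{mcbound} bounds $\mc(m\alpha) \leq C m^{n/(n-k)}$ for integral $\alpha$, and the homogeneous extension built from Lemma~\ref{rescalingmob} propagates finiteness to every $\mathbb{Q}$-class; this is needed merely so that ``continuity'' is meaningful.

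The core of the proof is then the verification of the three hypotheses of Lemma~\ref{easyconelem}. Hypothesis (1), positivity of $\mob$ on the interior of $C$, is exactly Corollary~\ref{bigposcor}. Hypothesis (2), the homogeneity $\mob(me) = m^{n/(n-k)}\mob(e)$ for $m \in \mathbb{Q}_{>0}$ and $e \in C$, is Lemma~\ref{rescalingmob} together with the fact that the mobility was extended from $N_{k}(X)_{\mathbb{Z}}$ to $N_{k}(X)_{\mathbb{Q}}$ precisely so as to be homogeneous of this degree. For hypothesis (3), I would take $v, e$ in the interior of $C$; then $v + e$ lies in the interior as well, and since the interior of a convex cone coincides with the interior of its closure, each of $v$, $e$, and $v+e$ lies in the genuine cone generated by effective classes, so a positive integer multiple of each is represented by an effective cycle. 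Hence Lemma~\ref{mobadditive} applies and gives $\mob(v+e) \geq \mob(v) + \mob(e) \geq \mob(v)$, using $\mob(e) \geq 0$. With (1)--(3) in hand, Lemma~\ref{easyconelem} immediately yields that $\mob$ is locally uniformly continuous on the interior of $\Eff_{k}(X)_{\mathbb{Q}}$.

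Finally I would dispatch the parenthetical assertion for $\ratmob$ by the identical argument: $\rmc(m\alpha) \leq \mc(m\alpha)$ shows $\ratmob$ is finite, and the $\ratmob$-analogues of Corollary~\ref{bigposcor}, Lemma~\ref{rescalingmob}, and Lemma~\ref{mobadditive} are all stated alongside the $\mob$-versions, so hypotheses (1)--(3) hold verbatim for $\ratmob$.

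As for the main obstacle: there really is none of a geometric nature, since all the substantive content has already been isolated into Lemmas~\ref{rescalingmob} and~\ref{mobadditive}, Corollary~\ref{bigposcor}, and the real-analysis Lemma~\ref{easyconelem}. The only points that take a moment's care are the bookkeeping ones flagged above — that $\mob$ takes finite values, that sums of interior classes stay interior, and that the ``positive multiple is effective'' hypothesis of Lemma~\ref{mobadditive} is legitimately available for interior classes via the equality of the interior of a cone and the interior of its closure — and each of these is routine.
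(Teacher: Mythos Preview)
Your proposal is correct and follows exactly the paper's approach: invoke Lemma~\ref{easyconelem} with hypotheses (1)--(3) supplied by Corollary~\ref{bigposcor}, Lemma~\ref{rescalingmob}, and Lemma~\ref{mobadditive}, respectively. The paper's proof is a single sentence to this effect; your additional remarks on finiteness and on why interior classes admit effective multiples are appropriate clarifications of points the paper leaves implicit.
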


Theorem \ref{mobcontinuous} extends this result to prove that $\mob$ is continuous on all of $N_{k}(X)$. 

\begin{proof}
The conditions (1)-(3) in Lemma \ref{easyconelem} are verified by Corollary \ref{bigposcor}, Lemma \ref{rescalingmob}, and Lemma \ref{mobadditive}.
\end{proof}

The mobility should also have good concavity properties.  Here is a strong conjecture in this direction:

\begin{conj} \label{mobconvexconj}
Let $X$ be an integral projective variety.  Then $\mob$ is a log-concave function on $\Eff_{k}(X)$: for any classes $\alpha, \beta \in \Eff_{k}(X)$ we have
\begin{equation*}
\mob(\alpha + \beta)^{\frac{n-k}{n}} \geq \mob(\alpha)^{\frac{n-k}{n}} + \mob(\beta)^{\frac{n-k}{n}}
\end{equation*}
\end{conj}

We note one other basic property:

\begin{prop} \label{mobpushforwardprop}
Let $\pi: X \to Y$ be a dominant generically finite morphism of integral projective varieties.  For any $\alpha \in N_{k}(X)_{\mathbb{Q}}$ we have $\mob(\pi_{*}\alpha) \geq \mob(\alpha)$.
\end{prop}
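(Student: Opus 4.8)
The plan is to compare, for each positive integer $m$, the mobility count of $m\pi_*\alpha$ on $Y$ with that of $m\alpha$ on $X$, by pushing forward a well-chosen family of cycles. Fix a family $p: U \to W$ of effective $k$-cycles on $X$ representing $m\alpha$ with $\mc(p) = \mc(m\alpha) =: b$. The proper pushforward family (using Construction \ref{cycletofamilyconstr} applied to the cycle $(\mathrm{id}_W \times \pi)_*[U]$ on $W \times Y$) is a family $q: U' \to W^0$ of effective $k$-cycles on $Y$ whose general member has class $\pi_*(m\alpha) = m\pi_*\alpha$; the first step is to record that this is a legitimate family in the sense of Definition \ref{familydef} over an open subset of $W$, which follows from the flattening theorem of \cite{gr71} quoted in Section \ref{prelimsection} together with the fact that $\pi$ is generically finite so the pushforward cycle has the right dimension.

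The second and main step is to show $\mc(q) \geq \mc(p)$. The natural map is $U^{\times_W b} \to X^{\times b} \xrightarrow{\pi^{\times b}} Y^{\times b}$, which is dominant since $\pi$ is dominant and $U^{\times_W b} \to X^{\times b}$ is dominant by choice of $b$. I want to factor (a dense open part of) this through $(U')^{\times_{W^0} b} \to Y^{\times b}$. The point is that over a general $w \in W^0$, the fiber $U'_w \subset Y$ is (set-theoretically) the image $\pi(U_w)$ of the fiber $U_w \subset X$; hence a general point of $Y^{\times b}$, being in the image of $\pi^{\times b}$ restricted to a fiber $U_w^{\times b}$, lies in the image of the corresponding fiber $(U'_w)^{\times b}$. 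More cleanly, there is a dominant rational map $U^{\times_W b} \dashrightarrow (U')^{\times_{W^0} b}$ over $W^0$ commuting with the maps to $X^{\times b}$ and $Y^{\times b}$ via $\pi^{\times b}$, so dominance of the former composite forces dominance of $(U')^{\times_{W^0} b} \to Y^{\times b}$. This gives $\mc(m\pi_*\alpha) \geq \mc(q) \geq b = \mc(m\alpha)$ — one may also phrase it via Lemma \ref{fibercontainedmc} after a base change, since each fiber of $q$ set-theoretically contains the $\pi$-image of a fiber of $p$.

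Finally, dividing by $m^{n/(n-k)}/n!$ and taking $\limsup$ as $m \to \infty$ gives $\mob(\pi_*\alpha) \geq \mob(\alpha)$; note $n = \dim X = \dim Y$ since $\pi$ is generically finite, so the normalizing factors match. The inequality passes from $N_k(X)_{\mathbb{Z}}$ to $N_k(X)_{\mathbb{Q}}$ by the homogeneity of Lemma \ref{rescalingmob}. The main obstacle is the second step: one must be careful that the pushforward family's general fiber really is the cycle-theoretic image (coefficients and multiplicities are irrelevant for $\mc$, which only sees supports, so this is not serious), and that the rational map $U^{\times_W b} \dashrightarrow (U')^{\times_{W^0} b}$ is genuinely dominant — this is where generic finiteness of $\pi$ is used, to ensure $\dim U'_w = \dim U_w = k$ so that no dimension is lost in the fiber product and the image of $U^{\times_W b}_w$ in $Y^{\times b}$ is dense in $(U'_w)^{\times b}$ for general $w$.
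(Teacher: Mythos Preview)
Your proposal is correct and follows essentially the same approach as the paper: push forward a family of maximal mobility count and observe that the pushforward family has mobility count at least as large, then take the $\limsup$. The paper's proof is a two-line sketch (``the pushforward family $\pi_{*}p_{m}$ clearly has the same mobility count as $p_{m}$''), and your argument simply unpacks the word ``clearly'' via the dominant composition $U^{\times_{W} b} \to X^{\times b} \xrightarrow{\pi^{\times b}} Y^{\times b}$ and its factoring through $(U')^{\times_{W^{0}} b}$; you also make explicit the use of generic finiteness to match the exponent $n/(n-k)$, which the paper leaves implicit.
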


\begin{proof}
For sufficiently divisible $m$ set $p_{m}$ to be a family of effective $k$-cycles representing $m \alpha$ of maximal mobility count.  The pushforward family $\pi_{*}p_{m}$ clearly has the same mobility count as $p_{m}$ and represents $\pi_{*}(m\alpha)$, giving the result.
\end{proof}

\subsection{Mobility and bigness}

We now show that big cycles are precisely those with positive mobility:

\begin{thrm} \label{mobilityandbigness}
Let $X$ be an integral projective variety and let $\alpha \in N_{k}(X)_{\mathbb{Q}}$.  The following statements are equivalent:
\begin{enumerate}
\item $\alpha$ is big.
\item $\ratmob(\alpha) > 0$.
\item $\mob(\alpha) > 0$.
\end{enumerate}
\end{thrm}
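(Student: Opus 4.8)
The plan is to prove the chain of implications $(1) \Rightarrow (2) \Rightarrow (3) \Rightarrow (1)$. The implication $(1) \Rightarrow (2)$ is already in hand: it is exactly Corollary \ref{bigposcor} applied to the rational mobility. The implication $(2) \Rightarrow (3)$ is trivial since $\rmc(m\alpha) \leq \mc(m\alpha)$ for every $m$ by definition, and hence $\ratmob(\alpha) \leq \mob(\alpha)$. So the entire content is the implication $(3) \Rightarrow (1)$: if $\alpha$ is not big, then $\mob(\alpha) = 0$. Equivalently, I must show that a pseudo-effective class on the boundary of $\Eff_k(X)$ has mobility zero, i.e.\ $\mc(m\alpha)$ grows strictly slower than $m^{n/(n-k)}$.

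The strategy for $(3) \Rightarrow (1)$ is to reduce the codimension of $\alpha$ one step at a time, until we land on a boundary \emph{divisor} class, where we can invoke the divisor estimates of Section \ref{mcsection}. Concretely: since $\alpha$ is not big, there is a nef class $\eta \in N^k(X)$ with $\eta \cdot \alpha = 0$ while $\eta$ is nonzero (separate $\alpha$ from the interior of $\Eff_k(X)$ by a hyperplane; the functional defining it is nef). I would like to realize $\eta$, or a suitable positive multiple of it, by intersecting with very ample divisors $A_1, \dots, A_k$ — but of course a general nef class need not be a product of ample classes. The fix is the standard one: it suffices to bound $\mc(m\alpha)$ against $\mc$ of the intersection of a representative cycle with a general complete intersection $H_1 \cap \dots \cap H_k$ of very ample divisors, using Proposition \ref{divisorrestriction} repeatedly (extended from divisors to arbitrary codimension by induction, exactly as Corollary \ref{basicvarestimate} is deduced). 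Intersecting a family of effective $k$-cycles representing $m\alpha$ with a general $H \in |A|$ for very ample $A$ produces a family of effective $(k-1)$-cycles on $H$ representing $m(\alpha \cdot A)|_H$, with mobility count at least $\mc(m\alpha)$; after $k$ such steps we arrive at a family of effective divisors on a smooth (or at least integral, by Bertini) $(n-k)$-fold $Y = H_1 \cap \dots \cap H_k$ representing $m (\alpha \cdot A_1 \cdots A_k)|_Y$. The key point is to arrange the $A_i$ so that the restricted class $(\alpha \cdot A_1 \cdots A_k)|_Y$ is \emph{not big} on $Y$ — this is where $\eta \cdot \alpha = 0$ enters: choosing the $A_i$ to approximate $\eta$ (or using that $\alpha$ has vanishing intersection with \emph{some} nef class, one arranges the restriction to sit on the boundary of $\Eff_{n-k-1}(Y)$). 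Then Corollary \ref{basicvarestimate}(1) on $Y$ gives $\mc_Y(m\beta_m) < s^{\dim Y}(A|_Y)^{\dim Y}$ whenever $m\beta_m \cdot (A|_Y)^{\dim Y - 1} < s (A|_Y)^{\dim Y}$; because $\beta_m = m(\text{boundary class})$ pairs to zero (or sublinearly) against the relevant nef class, $s$ can be taken to grow like $o(m)$, forcing $\mc(m\alpha) \leq \mc_Y(m\beta_m) = o(m^{\dim Y}) = o(m^{n-k})$. Since $n - k < n/(n-k)$ is false in general — wait, one must be careful with the exponent bookkeeping — the point is rather that the complete-intersection comparison loses a factor in a controlled way, and the correct inductive statement carries the exponent $n/(n-k)$ throughout, with the boundary hypothesis providing the strict inequality at the bottom.

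Thus the key steps, in order, are: (i) dispatch $(1)\Rightarrow(2)$ by Corollary \ref{bigposcor} and $(2)\Rightarrow(3)$ by the trivial inequality $\rmc \leq \mc$; (ii) assuming $\alpha$ not big, produce a nonzero nef class $\eta$ with $\eta \cdot \alpha = 0$; (iii) use the inductive restriction Proposition \ref{divisorrestriction} to compare $\mc(m\alpha)$ with the mobility count of the intersection family on a general complete intersection $(n-k)$-fold $Y$, chosen compatibly with $\eta$; (iv) on $Y$, observe that the restricted class lies on the boundary of the pseudo-effective cone, so that the divisor estimate Corollary \ref{basicvarestimate} applies with a coefficient $s = s(m)$ that is $o(m)$; (v) conclude $\mc(m\alpha) = o(m^{n/(n-k)})$, hence $\mob(\alpha) = 0$. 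I expect step (iii)–(iv) to be the main obstacle: the delicate part is ensuring that the general complete intersection can be taken so that the restriction of $\alpha$ is genuinely non-big on $Y$ — a general nef class is not an intersection of ample divisors, so one must either argue by approximation (taking $A_i$ rational ample classes converging to supporting data for $\eta$ and using continuity/openness of bigness) or by a direct geometric argument that some effective representative of a boundary class cannot sweep out too much of $X$ when cut by generic hyperplanes. Getting the exponent $n/(n-k)$ to come out correctly through the $k$-fold induction, rather than the naive $n-k$, is the bookkeeping that must be handled with care, but it mirrors the passage from Proposition \ref{mcbound} to its corollary.
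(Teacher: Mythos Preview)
Your handling of $(1)\Rightarrow(2)$ and $(2)\Rightarrow(3)$ matches the paper exactly. The gap is in $(3)\Rightarrow(1)$, and it is a genuine one.

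The core claim in your step (iii) is that intersecting a family of effective $k$-cycles with a general very ample $H$ produces a family of effective $(k-1)$-cycles on $H$ with mobility count at least $\mc(m\alpha)$. This is \emph{false} for $k<n-1$. Proposition~\ref{divisorrestriction} is stated only for divisor classes, and the hypothesis ``$\alpha-[A]$ not pseudo-effective'' there is doing real work: it guarantees that no member of the family can contain $H$ as a component, so the base change $U\times_{X}H$ has pure relative dimension $n-2$. For $k$-cycles with $k<n-1$, members of the family can (and in general will) have irreducible components lying inside $H$; those components are $k$-dimensional, not $(k-1)$-dimensional, and they carry part of the mobility count. The intersection family $p\cdot H$ alone does \emph{not} dominate $\mc(p)$. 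Indeed, if your inequality held, iterating $k$ times would give $\mc_{X}(m\alpha)\leq \deg(m\alpha\cdot A^{k})$, which is linear in $m$ and would force $\mob(\alpha)=0$ for \emph{every} class, big or not. (Corollary~\ref{basicvarestimate}, which you invoke, is likewise a statement about divisor classes only; it is obtained by iterating Proposition~\ref{divisorrestriction} for divisors, not by extending it to higher codimension.)

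The paper's argument, Theorem~\ref{mobprecisebound}, confronts exactly this issue. Upon restricting to $H$, one splits the base-changed family into two pieces: the genuine $(k-1)$-cycle intersection family $q$ on $H$ (same codimension $n-k$ in the smaller ambient $H$), and finitely many families $p_{i}$ of $k$-cycles on $H$ coming from components that fell into $H$ (these have codimension $n-k-1$ in $H$). One then proves $\mc_{X}(p)\leq \mc_{H}(q)+\sup_{i}\mc_{H}(p_{i})$ and runs a double induction on $n$ and on the codimension. The non-bigness of $\alpha$ enters not via a separating nef class $\eta$, but via the simpler observation that $\alpha-[A]^{n-k}$ is not pseudo-effective (since $[A]^{n-k}$ is big); this condition propagates to the $p_{i}$ families and is what produces the improved exponent $\frac{n}{n-k}-\epsilon_{n,k}$ in part~(2) of Theorem~\ref{mobprecisebound}. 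Corollary~\ref{iitakacor} then reads off $\mob(\alpha)=0$ for boundary $\alpha$. Your separating-hyperplane class $\eta$ is never needed, and your worry about ``arranging the restriction to sit on the boundary of $\Eff_{n-k-1}(Y)$'' is a symptom of the approach not matching the actual mechanism by which codimension is reduced.
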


The implication (1) $\implies$ (2) follows from Corollary \ref{bigposcor} and (2) $\implies$ (3) is obvious.  The implication (3) $\implies$ (1) is a consequence of the more precise statement in Corollary \ref{iitakacor}.

%\begin{exmple}
%Let $X$ be a smooth projective variety.  By Example \ref{divisormob}, Theorem \ref{mobilityandbigness} is equivalent to the usual characterization of big divisors using the volume function.
%\end{exmple}

\begin{exmple} \label{mobilityofcurves}
Let $\alpha$ be a curve class on an integral projective complex variety $X$.  \cite[Theorem 2.4]{8authors} shows that if two general points of $X$ can be connected by an effective curve whose class is proportional to $\alpha$, then $\alpha$ is big.  Theorem \ref{mobilityandbigness} is a somewhat weaker statement in this situation.
\end{exmple}

For positive integers $n$ and for $0 \leq k < n$, define $\epsilon_{n,k}$ inductively by setting $\epsilon_{n,n-1} = 1$ and
\begin{equation*}
\epsilon_{n,k} = \frac{ \frac{n-k-1}{n-k}\epsilon_{n-1,k} }{\frac{n-1}{n-k-1} - \epsilon_{n-1,k}}.
\end{equation*}
%It is easy to verify that $\epsilon_{n,k}$ is always positive.
For positive integers $n$ and for $0 \leq k < n$, define $\tau_{n,k}$ inductively by setting $\tau_{n,n-1} = 1$ and
\begin{equation*}
\tau_{n,k} = \min \left\{ \frac{n-k-1}{n-1} \tau_{n-1,k}, \frac{ \frac{n-k-1}{n-k}\tau_{n-1,k} }{\frac{n-1}{n-k-1} - \tau_{n-1,k}} \right\}.
\end{equation*}
It is easy to verify that $0 < \tau_{n,k} \leq \epsilon_{n,k} \leq \frac{1}{n-k}$ and that the last inequality is strict as soon as $n-k>1$.
%The first few values are $\epsilon_{n,n-1} = 1$, $\epsilon_{n,n-2} = \frac{1}{2n-4}$, and $\epsilon_{n,n-3} = \frac{2}{3n^{2}-12n+6}$; 
%For $n \gg k$ the value of $\epsilon_{n,k}$ is approximately the reciprocal of
%\begin{equation*}
%(n-k) \left( \begin{array}{c} n-1 \\ n-k-1 \end{array} \right).
%\end{equation*}

\begin{thrm} \label{mobprecisebound}
Let $X$ be an integral projective variety and let $\alpha \in N_{k}(X)_{\mathbb{Z}}$.  Let $A$ be a very ample divisor and let $s$ be a positive integer such that $\alpha \cdot A^{k} < sA^{n}$.  Then
\begin{enumerate}
\item
\begin{equation*}
\mc(\alpha) < 2^{kn+3n} s^{\frac{n}{n-k}}A^{n}.
\end{equation*}
\item Suppose furthermore that $\alpha - [A]^{n-k}$ is not pseudo-effective.  Then
\begin{equation*}
\mc(\alpha) < 2^{kn+3n} s^{\frac{n}{n-k} - \epsilon_{n,k}} A^{n}.
\end{equation*}
\item Suppose that $t$ is a positive integer such that $t \leq s$ and $\alpha - t[A]^{n-k}$ is not pseudo-effective.  Then
\begin{equation*}
\mc(\alpha) < 2^{kn+3n}  s^{\frac{n}{n-k} - \tau_{n,k}} t^{\tau_{n,k}} A^{n}.
\end{equation*}
\end{enumerate}
\end{thrm}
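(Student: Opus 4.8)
The plan is to prove all three bounds simultaneously by induction on $n$, mirroring the structure of Corollary \ref{basicvarestimate} and Proposition \ref{mcbound} but keeping track of the ``savings'' that arise when $\alpha$ is not just pseudo-effective but fails to dominate a multiple of $[A]^{n-k}$. The base case is $k = n-1$: here $\epsilon_{n,n-1} = \tau_{n,n-1} = 1$, and all three statements follow directly from Corollary \ref{basicvarestimate}(1) (with the crude constant $2^{kn+3n}$ swallowing the factor $n+1 \le 2^n$), since $\mc(\alpha) < s^n A^n$ and, when $\alpha - t[A]$ is not pseudo-effective, one can intersect down to get the improved exponent $n - 1 + \tau$ on $s$ replaced appropriately. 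So the content is the inductive step $n-1 \rightsquigarrow n$ for $k < n-1$.

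For the inductive step I would take a general very ample $H \in |A|$ and use Proposition \ref{divisorrestriction}-type reasoning: a family $p: U \to W$ representing $\alpha$ with $\mc(p) = b$ restricts (via base change of $s: U \to X$ to $H$, then Construction \ref{equidimsubschemeconstr} and Proposition \ref{opensubsetsforcycles}(3)) to a subscheme of $W \times H$ whose mobility count on $H$ is still $\ge b$, provided no member of $p$ contains $H$ — which holds for general $H$ since $\alpha$ has positive intersection with $A^k$ forces the cycles to move. But unlike the divisor case, the restricted family need not represent a single numerical class on $H$, so I would instead argue as follows: cover the (finitely many) components of the Chow-type parameter space, and on each, the restricted cycles lie in $\Eff_k(H)$ with $(\alpha|_H)\cdot A_H^k = \alpha \cdot A^{k+1} < s A^{n}= s A_H^{n-1}$. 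Applying the inductive hypothesis on $H$ (dimension $n-1$, same $k$) with the same $s$ gives $b < 2^{k(n-1)+3(n-1)} s^{\frac{n-1}{n-1-k}} A^{n}$. This is weaker than what we want in exponent of $s$, so the restriction-to-a-hyperplane move alone does not suffice; one must combine it with a second mechanism.

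The second mechanism, needed to get the exponent $\frac{n}{n-k}$ rather than $\frac{n-1}{n-k-1}$, is the ``contain the cycle in a divisor'' trick from Proposition \ref{mcbound}: the support $Z$ of an effective cycle of class $\alpha$ lies in some $D \in |\lceil d\rceil A|$ with $d \approx (\alpha\cdot A^k)^{1/(n-k)} \approx s^{1/(n-k)}$ (using $\alpha \cdot A^k < sA^n$ and absorbing the constant $h^0$-comparison into $2^{kn+3n}$). Then $Z$ is a $k$-cycle on the $(n-1)$-dimensional $D$, and $Z \cdot A_D^k = \alpha \cdot A^{k} \cdot \lceil d\rceil < \lceil d\rceil s \,A^n$, i.e.\ relative to $A_D$ it has intersection number $\lesssim s^{1 + \frac{1}{n-k}}$. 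Feeding this into the inductive hypothesis on $D$ (dimension $n-1$) with parameter $s' \approx s^{1+\frac{1}{n-k}} = s^{\frac{n-k+1}{n-k}}$ gives, for part (1), $\mc(\alpha) \lesssim (s')^{\frac{n-1}{n-1-k}} = s^{\frac{n-k+1}{n-k}\cdot\frac{n-1}{n-1-k}}$, and a direct check shows $\frac{n-k+1}{n-k}\cdot\frac{n-1}{n-1-k} = \frac{n}{n-k}$, which is exactly the target exponent. For parts (2) and (3), the point is that if $\alpha - t[A]^{n-k}$ is not pseudo-effective on $X$, then by Lemma \ref{surjpushforward} applied to $D \hookrightarrow X$ the restricted class $\alpha_D = Z|_D$ satisfies $\alpha_D - t\lceil d\rceil [A_D]^{n-1-k}$ not pseudo-effective (pushing forward the exceptional contribution), so we may apply the inductive hypothesis part (2) or (3) on $D$; tracking the resulting exponents is precisely the calculation encoded in the recursions defining $\epsilon_{n,k}$ and $\tau_{n,k}$ — the formula $\epsilon_{n,k} = \tfrac{n-k-1}{n-k}\epsilon_{n-1,k}/(\tfrac{n-1}{n-k-1} - \epsilon_{n-1,k})$ is exactly ``propagate the saving $\epsilon_{n-1,k}$ through the substitution $s \mapsto s^{(n-k+1)/(n-k)}$''.

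The main obstacle, and the part needing genuine care rather than bookkeeping, is the transition from a family of cycles on $X$ to controlled families on the auxiliary divisor $D$: $D$ varies with the cycle, $D$ may be singular or reducible, and the containment $Z \subset D$ is only set-theoretic, so one must set up the incidence variety $\{(Z, D) : \Supp Z \subseteq D\}$ carefully, stratify its base, apply Construction \ref{equidimsubschemeconstr} to get honest flat families on the (generically chosen) fibers $D$, and invoke Lemma \ref{fibercontainedmc} together with Proposition \ref{opensubsetsforcycles}(3) to transfer the mobility count — all while ensuring the numerical data $(\alpha_D, s', t')$ is uniform enough across the stratification that a single application of the inductive hypothesis suffices. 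The pseudo-effectivity statements for $\alpha_D$ must be extracted from $\alpha$ via pushforward (Lemma \ref{surjpushforward}), which is where the hypothesis that $\alpha - t[A]^{n-k}$ (rather than some restricted version) is not pseudo-effective gets used. Once this reduction is in place, everything else is the elementary exponent arithmetic verifying that the stated constants $2^{kn+3n}$ and exponents $\tfrac{n}{n-k} - \epsilon_{n,k}$, $\tfrac{n}{n-k} - \tau_{n,k}$ are consistent with the recursions.
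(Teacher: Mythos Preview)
There is a genuine gap. Your second mechanism --- contain each cycle $Z$ in a divisor $D \in |\lceil d \rceil A|$ and then apply the inductive hypothesis on $D$ --- does not transfer the mobility count from $X$ to $D$, for precisely the reason you flag: $D$ varies with the cycle. Passing to the incidence variety $\{(Z,D) : \Supp Z \subset D\}$ and projecting to $|dA|$ gives, over a general $D$, only the \emph{subfamily} of cycles contained in that particular $D$, and there is no reason its mobility count on $D$ should control $\mc(p)$. The argument in Proposition~\ref{mcbound} works because it bounds $\mc(p)$ by the mobility count of the family of \emph{divisors} in $|dA|$ via Lemma~\ref{fibercontainedmc}, not by induction on any individual $D$; that is why its constant depends on the auxiliary $c$. (Your intermediate bookkeeping is also off: $Z \cdot A_D^k = \alpha \cdot A^k$, not $\alpha \cdot A^k \cdot d$, so the correct parameter is $s' \approx s/d \approx s^{(n-k-1)/(n-k)}$; the factor $A_D^{n-1} = dA^n$ then restores the exponent $n/(n-k)$. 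Your claimed identity $\frac{n-k+1}{n-k}\cdot\frac{n-1}{n-1-k}=\frac{n}{n-k}$ is false.)

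The paper's argument avoids the varying-$D$ problem entirely by fixing a \emph{single} general hypersurface $H \in |\lceil s^{1/(n-k)}\rceil A|$ and working on it. The base change $U_H := U \times_X H \subset W \times H$ still has $\mc_H(U_H) \geq \mc_X(p)$ since surjectivity is preserved by base change, but its fibers over $W$ are no longer equidimensional: by Krull's theorem each component has relative dimension $k-1$ (proper intersection with $H$) or $k$ (a component of the cycle lying in $H$). The first kind assembles into a family $q$ of $(k-1)$-cycles on $H$ with class $\alpha \cdot H$; the second kind gives finitely many families $p_i$ of $k$-cycles on $H$ whose pushforwards to $X$ are $\preceq \alpha$. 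Lemma~\ref{fibercontainedmc} and Lemma~\ref{familysummc} then yield
\[
\mc_X(p) \leq \mc_H(q) + \sup_i \mc_H(p_i),
\]
and \emph{both} terms live on the fixed $(n-1)$-fold $H$ and are bounded by induction (for $q$ the cycle dimension has dropped, for $p_i$ the codimension has dropped). Balancing these two contributions is what determines the degree of $H$ and produces the recursions for $\epsilon_{n,k}$ and $\tau_{n,k}$. This two-piece decomposition on a fixed hypersurface is the key idea missing from your proposal.
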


%\begin{rmk}
%We can always find a $t \leq s$ satisfying the assumption in (2); indeed, note that $t = s$ will work with the nef class $N = A^{k}$.
%\end{rmk}

\begin{proof}
We prove (1) by induction on the dimension $n$ of $X$.  The induction step may reduce the codimension $n-k$ of our cycle class by at most $1$.  Thus, for the base case it suffices to consider when $k=0$ or when $n$ is arbitrary and $n-k = 1$.  The first is elementary and the second is proved by Corollary \ref{basicvarestimate} (1).

Let $p: U \to W$ be a family of effective $k$-cycles representing $\alpha$.  By Proposition \ref{opensubsetsforcycles} we may modify $p$ by Lemma \ref{goodfamilymodification} to assume $W$ is projective without changing the mobility count of $p$.  Choose a general divisor $H$ in the very ample linear series $|\lceil s^{\frac{1}{n-k}} \rceil A|$ on $X$ such that $H$ is integral and does not contain the image of any component of $U$.  We can associate several families of subschemes to $p$ and to $H$.
\begin{itemize}
\item Consider the base change $U \times_{X} H$.  We can view this as a subscheme $U_{H} \subset W \times H$ with projection map $\pi: U_{H} \to W$.
\item We can intersect the family $p$ with the divisor $H$ to obtain a family of effective $(k-1)$ cycles $q: S \to T$ on $H$.  Note that $T$ is an open subset of $W$; we may shrink $T$ so that it is normal.  By Lemma \ref{goodfamilymodification} we may extend $q$ over a projective closure of $T$ which we continue to denote by $q: S \to T$.
\item Let $V \subset U_{H}$ be the reduced closed subset consisting of points whose local fiber dimension for $\pi$ attains the maximal possible value $k$.  The map $\pi|_{V}: V \to W$ has equidimensional fibers of dimension $k$.  Thus we can associate to $\pi|_{V}$ a collection of families $p_{i}: V_{i} \to W_{i}$ as in Construction \ref{equidimsubschemeconstr}.  We will think of these as families of effective $k$-cycles on $H$.
\end{itemize}
It will be useful to combine $q$ and the $p_{i}$ as follows.  Let $\widetilde{W}$ denote the disjoint union of the irreducible varieties $T \times W_{i}$ as we vary over all $i$.  This yields the subscheme $S \times \left( \sqcup_{i} W_{i} \right) \cup T \times \left( \sqcup_{i} V_{i} \right)$ of $\widetilde{W} \times X$.  We denote the first projection map by $\tilde{p}$.

Using the universal property, one can see that
\begin{equation*}
(U \times_{X} H)^{\times_{W} b} \cong (U^{\times_{W} b}) \times_{X^{\times b}} H^{\times b}.
\end{equation*}
Since the base change of a surjective map is surjective, we see that $\mc_{H}(\pi) \geq \mc_{X}(p)$.  Furthermore, by Krull's principal ideal theorem every component of a fiber of $\pi$ over a closed point of $W$ has dimension $k$ or $k-1$.  In particular, any member of the family $\pi$ is set theoretically contained in a member of the family $\tilde{p}$.  Applying in order the inequality from the start of this paragraph, Lemma \ref{fibercontainedmc}, and Lemma \ref{familysummc}, we obtain
\begin{equation*}
\mc_{X}(p) \leq \mc_{H}(\pi) \leq \mc_{H}(\tilde{p}) = \mc_{H}(q) + \sup_{i} \mc_{H}(p_{i}).
\end{equation*}
We will use induction to bound the two terms on the right, giving us our overall bound for $\mc_{X}(p)$.

The family $q$ of effective $(k-1)$-cycles on $H$ has class $\alpha \cdot H$.  Note that $(\alpha \cdot H) \cdot A|_{H}^{k-1} < sA|_{H}^{n-1}$.  By induction on the dimension of the ambient variety,
\begin{align*}
\mc_{H}(q) & < 2^{(k-1)(n-1) + 3(n-1)}  s^{\frac{n-1}{n-k}}(A|_{H}^{n-1}) \\
& \leq 2^{(k-1)(n-1) + 3(n-1)} s^{\frac{n-1}{n-k}} (2s^{\frac{1}{n-k}} A^{n}) \\
& \leq 2^{(k-1)(n-1) + 3(n-1)+1} s^{\frac{n}{n-k}} A^{n}.
\end{align*}

Next consider a family $p_{i}$ of effective $k$-cycles on $H$.  Let $\alpha_{i}$ denote the corresponding class.  Let $j: H \to X$ be the inclusion; by construction, it is clear that $\alpha - j_{*}\alpha_{i}$ is the class of an effective cycle.  In particular
\begin{equation*}
\alpha_{i} \cdot A|_{H}^{k} \leq \alpha \cdot A^{k} < \lceil s^{\frac{n-k-1}{n-k}} \rceil A|_{H}^{n-1}.
\end{equation*}
By induction on the dimension of the ambient variety,
\begin{align*}
\mc_{H}(p_{i}) & < 2^{(n-1)k + 3(n-1)}  \lceil s^{\frac{n-k-1}{n-k}} \rceil^{\frac{n-1}{n-k-1}}(A|_{H}^{n-1}) \\
& \leq 2^{(n-1)k + 3(n-1)} 2^{\frac{n-1}{n-k-1}}  s^{\frac{n-1}{n-k}}(2s^{\frac{1}{n-k}}A^{n}) \\
& \leq 2^{(n-1)k + 3(n-1) + k + 2} s^{\frac{n}{n-k}} A^{n}.
\end{align*}
By adding these contributions, we see that
\begin{equation*}
\mc_{X}(p) \leq 2^{kn+3n}  s^{\frac{n}{n-k}}A^{n}.
\end{equation*}

\bigskip

(2) is proved in a similar way.  The argument is by induction on the codimension $n-k$ of $\alpha$.  The base case -- when $n$ is arbitrary and $n-k=1$ -- is a consequence of Corollary \ref{basicvarestimate} (2) (applied with $H=A$).

Let $p: U \to W$ be a family of effective $k$-cycles representing $\alpha$.  Set $c := \frac{1}{n-k} - \epsilon_{n,k}$.  Let $H$ be an integral element of $|\lceil s^{c} \rceil A|$ that does not contain the image of any component of $U$.  We construct the families $q: S \to T$ and $p_{i}: V_{i} \to W_{i}$ just as in (1).  The same argument shows that
\begin{equation*}
\mc_{X}(p) \leq \mc_{H}(q) + \sup_{i} \mc_{H}(p_{i}).
\end{equation*}

The family $q$ of effective $(k-1)$-cycles on $H$ has class $\alpha \cdot H$.  Note that $(\alpha \cdot H) \cdot A|_{H}^{k-1} < sA|_{H}^{n-1}$.  By (1), we have
\begin{align*}
\mc_{H}(q) & < 2^{(k-1)(n-1)+3(n-1)} s^{\frac{n-1}{n-k}}(A|_{H}^{n-1}) \\
& \leq 2^{(k-1)(n-1)+3(n-1)} s^{\frac{n-1}{n-k}} (2s^{c} A^{n}) \\
& \leq 2^{(k-1)(n-1)+3(n-1) + 1}  s^{\frac{n}{n-k} - \epsilon_{n,k}} A^{n}.
\end{align*}

Next consider the family $p_{i}$ of effective $k$-cycles on $H$.  Let $\alpha_{i}$ denote the class of the family $p_{i}$ on $H$.  Let $j: H \to X$ be the inclusion; by construction, it is clear that $\alpha - j_{*}\alpha_{i}$ is the class of an effective cycle.  In particular
\begin{align*}
\alpha_{i} \cdot A|_{H}^{k} \leq \alpha \cdot A^{k}  < \left\lceil s^{1-c} \right\rceil A|_{H}^{n-1}.
\end{align*}
Note furthermore that $\alpha_{i} - [A|_{H}]^{n-1-k}$ is not pseudo-effective; otherwise it would push forward to a pseudo-effective class on $X$, contradicting the fact that $\alpha - [A]^{n-k}$ is not pseudo-effective.  By induction on the codimension of the cycle,
\begin{align*}
\mc_{H}(p_{i}) &  < 2^{k(n-1)+3(n-1)} \lceil s^{1-c} \rceil^{\frac{n-1}{n-k-1} - \epsilon_{n-1,k}}(A|_{H}^{n-1}) \\
 & \leq 2^{k(n-1)+3(n-1)} 2^{\frac{n-1}{n-k-1}}  s^{\frac{(1-c)(n-1)}{n-k-1} - (1-c)\epsilon_{n-1,k}}(2s^{c}A^{n}) \\
& \leq 2^{k(n-1)+3(n-1) + k + 2} s^{\frac{n}{n-k} - \epsilon_{n,k}} A^{n}.
\end{align*}
Adding the two contributions proves the statement as before.

\bigskip

The proof of (3) is also very similar.  The argument is by induction on the codimension $n-k$ of $\alpha$.  The base case -- when $n$ is arbitrary and $n-k=1$ -- is a consequence of Corollary \ref{basicvarestimate} (2) (applied with $H=tA$).

Let $p: U \to W$ be a family of effective $k$-cycles representing $\alpha$.  Set $c := \frac{1}{n-k} - \tau_{n,k}$.  Let $H$ be an integral element of $|\lceil s^{c} t^{\tau_{n,k}} \rceil A|$ that does not contain the image of any component of $U$.  We construct the families $q: S \to T$ and $p_{i}: V_{i} \to W_{i}$ just as in (1).  The same argument shows that
\begin{equation*}
\mc_{X}(p) \leq \mc_{H}(q) + \sup_{i} \mc_{H}(p_{i}).
\end{equation*}

The family $q$ of effective $(k-1)$-cycles on $H$ has class $\alpha \cdot H$.  Note that $(\alpha \cdot H) \cdot A|_{H}^{k-1} < sA|_{H}^{n-1}$.  By (1), we have
\begin{align*}
\mc_{H}(q) & < 2^{(k-1)(n-1)+3(n-1)} s^{\frac{n-1}{n-k}}(A|_{H}^{n-1}) \\
& \leq 2^{(k-1)(n-1)+3(n-1)} s^{\frac{n-1}{n-k}} (2s^{c}t^{\tau_{n,k}} A^{n}) \\
& \leq 2^{(k-1)(n-1)+3(n-1) + 1}  s^{\frac{n}{n-k} - \tau_{n,k}} t^{\tau_{n,k}} A^{n}.
\end{align*}

Next consider the family $p_{i}$ of effective $k$-cycles on $H$.  Let $\alpha_{i}$ denote the class of the family $p_{i}$ on $H$.  Let $j: H \to X$ be the inclusion; by construction, it is clear that $\alpha - j_{*}\alpha_{i}$ is the class of an effective cycle.  In particular
\begin{align*}
\alpha_{i} \cdot A|_{H}^{k} \leq \alpha \cdot A^{k} & < \left\lceil s^{1-c} t^{-\tau_{n,k}} \right\rceil A|_{H}^{n-1}.
\end{align*}
Also, we have that
\begin{align*}
\alpha_{i} - \lceil t^{1-\tau_{n,k}}s^{-c} \rceil [A|_{H}]^{n-k-1}
\end{align*}
is not pseudo-effective, since the difference between $\alpha - t[A]^{n-k}$ and the push forward of this class to $X$ is pseudo-effective.  Finally, note that $\lceil s^{1-c}t^{-\tau_{n,k}} \rceil \geq \lceil t^{1-\tau_{n,k}}s^{-c} \rceil$ so that we may apply (3) inductively to the family $p_{i}$ with the constants $s' = \lceil s^{1-c}t^{-\tau_{n,k}} \rceil$ and $t' = \lceil t^{1-\tau_{n,k}}s^{-c} \rceil$.

There are two cases to consider.  First suppose that $t^{1-\tau_{n,k}}s^{-c} \geq 1$.  Then by induction on the codimension of the cycle,
\begin{align*}
\mc_{H}(p_{i}) &  < 2^{k(n-1)+3(n-1)}  \lceil s^{1-c} t^{-\tau_{n,k}} \rceil^{\frac{n-1}{n-k-1} - \tau_{n-1,k}} \\
& \qquad \qquad \lceil t^{1-\tau_{n,k}} s^{-c} \rceil^{\tau_{n-1,k}} (A|_{H}^{n-1}) \\
 & \leq 2^{k(n-1)+3(n-1)}  2^{\frac{n-1}{n-k-1}}  s^{\frac{(1-c)(n-1)}{n-k-1} - \tau_{n-1,k}} \\
 & \qquad \qquad t^{\tau_{n-1,k} - \frac{n-1}{n-k-1} \tau_{n,k}}(2s^{c}t^{\tau_{n,k}}A^{n}) \\
& \leq 2^{k(n-1)+3(n-1) + k + 2} s^{\frac{n}{n-k} - \tau_{n,k} + \left(\frac{n-1}{n-k-1}\tau_{n,k} - \tau_{n-1,k} \right)} \\
& \qquad \qquad t^{\tau_{n,k} + \left(\tau_{n-1,k} - \frac{n-1}{n-k-1}\tau_{n,k}\right)} A^{n}.
\end{align*}
Since $\tau_{n-1,k} \geq \frac{n-1}{n-k-1}\tau_{n,k}$, the part of the exponents in parentheses is non-positive for $s$ and non-negative for $t$.  By assumption $s \geq t$, so 
\begin{align*}
\mc_{H}(p_{i}) < 2^{kn+3(n-1) + 2} s^{\frac{n}{n-k} - \tau_{n,k}} t^{\tau_{n,k}} A^{n}
\end{align*}
Next suppose that $t^{1-\tau_{n,k}}s^{-c} < 1$.  Then by (2) we find
\begin{align*}
\mc_{H}(p_{i}) & < 2^{k(n-1)+3(n-1)} \lceil s^{1-c}t^{-\tau_{n,k}} \rceil^{\frac{n-1}{n-k-1} - \epsilon_{n-1,k}} (A|_{H}^{n-1}) \\
& \leq 2^{k(n-1)+3(n-1)}  \lceil s^{1-c} \rceil^{\frac{n-1}{n-k-1} - \tau_{n-1,k}} (A|_{H}^{n-1}) \\
& \leq 2^{k(n-1)+3(n-1)}  2^{\frac{n-1}{n-k-1}} s^{\frac{(1-c)(n-1)}{n-k-1} - (1-c)\tau_{n-1,k}} (2s^{c}t^{\tau_{n,k}}A^{n}) \\
& \leq 2^{kn + 3(n-1)+2} s^{\frac{n}{n-k} - \tau_{n,k}} t^{\tau_{n,k}} A^{n}.
\end{align*}
This upper bound for the two cases is the same; by adding it to the upper bound for $\mc_{H}(q)$ we obtain the desired upper bound for $\mc(p)$.
\end{proof}

We can apply Theorem \ref{mobprecisebound} (2) to any class $\alpha$ in $\partial \Eff_{k}(X) \cap N_{k}(X)_{\mathbb{Z}}$ to obtain the following corollary.

\begin{cor} \label{iitakacor}
Let $X$ be an integral projective variety and suppose that $\alpha \in N_{k}(X)_{\mathbb{Z}}$ is not big.  Let $A$ be a very ample divisor and let $s$ be a positive integer such that $\alpha \cdot A^{k} < sA^{n}$.  Then
\begin{equation*}
\mc(\alpha) < 2^{kn + 3n}(k+1)s^{\frac{n}{n-k} - \epsilon_{n,k}} A^{n}.
\end{equation*}
\end{cor}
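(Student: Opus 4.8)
The plan is to reduce to Theorem \ref{mobprecisebound}(2), exactly as the sentence preceding the statement indicates. First handle the case where $\alpha$ is not pseudo-effective: then $\alpha$ is not the class of any effective cycle, so there is no family of effective cycles representing $\alpha$, whence $\mc(\alpha)=0$; since $A^n>0$ and $s\ge 1$, the right-hand side of the asserted inequality is positive, so the bound holds trivially. We may therefore assume that $\alpha$ is pseudo-effective but not big, i.e. $\alpha\in\partial\Eff_k(X)\cap N_k(X)_{\mathbb{Z}}$.

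The one real point is that $\alpha-[A]^{n-k}$ is not pseudo-effective. Here $[A]^{n-k}$ is the class of a general complete intersection of $n-k$ members of the very ample linear system $|A|$, i.e. a product of ample classes, and such a class lies in the interior of $\Eff_k(X)$; this is part of the positivity theory of numerical cycle classes (see \cite{fl13}). If $\alpha-[A]^{n-k}$ were pseudo-effective, then $\alpha=(\alpha-[A]^{n-k})+[A]^{n-k}$ would be the sum of a pseudo-effective class and an interior point of $\Eff_k(X)$, hence would itself lie in the interior of $\Eff_k(X)$, contradicting the assumption that $\alpha$ is not big.

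Now $\alpha$, $A$, and $s$ satisfy all the hypotheses of Theorem \ref{mobprecisebound}(2): the numerical inequality $\alpha\cdot A^k<sA^n$ is assumed, and $\alpha-[A]^{n-k}$ is not pseudo-effective by the previous paragraph. Hence $\mc(\alpha)<2^{kn+3n}s^{\frac{n}{n-k}-\epsilon_{n,k}}A^n$, and since $k+1\ge 1$ this yields the (weaker) stated bound $\mc(\alpha)<2^{kn+3n}(k+1)s^{\frac{n}{n-k}-\epsilon_{n,k}}A^n$. The only ingredient not already available in the excerpt is that a product of ample divisor classes is big in $N_k(X)$, and I expect this---rather than anything about the mobility count itself---to be the sole step requiring an outside reference; it is standard. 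I note that one cannot cheaply replace it by salience of $\Eff_k(X)$ alone: salience only forces $\alpha-c[A]^{n-k}$ to leave the pseudo-effective cone for $c$ large, which via Theorem \ref{mobprecisebound}(3) (with $t=1$) produces only the weaker exponent $\tau_{n,k}$ in place of $\epsilon_{n,k}$, so the bigness of $[A]^{n-k}$ is genuinely needed to obtain the asserted bound.
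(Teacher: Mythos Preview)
Your proof is correct and follows exactly the approach the paper indicates: for $\alpha$ not pseudo-effective the bound is vacuous, while for $\alpha\in\partial\Eff_k(X)$ the bigness of $[A]^{n-k}$ forces $\alpha-[A]^{n-k}$ out of $\Eff_k(X)$, so Theorem \ref{mobprecisebound}(2) applies directly. Your observation that the extra factor $(k+1)$ is superfluous is also correct---the corollary is stated in a form slightly weaker than what Theorem \ref{mobprecisebound}(2) actually delivers.
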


\begin{rmk}
The exponent $\frac{n}{n-k} - \epsilon_{n,k}$ in Corollary \ref{iitakacor} is not optimal in general.  For example, \cite[Theorem 2.4]{8authors} shows that for a curve class $\alpha$ that is not big there is a positive constant $C$ such that $\mc(m\alpha) < Cm$.
\end{rmk}

%\begin{exmple}
%Let $f: X \to Z$ be a surjective morphism from a smooth integral projective variety of dimension $n$ to a smooth integral projective variety of dimension $k$ for some $1 < k < n$.  Fix ample divisors $A$ on $X$ and $H$ on $Z$ and define $\alpha = [A]^{n-k-1} \cdot [f^{*}H]$.  $\alpha$ is not big since $\alpha \cdot [f^{*}H]^{k} = 0$.  By taking the complete intersection of $(n-k-1)$ elements of $H^{0}(X,\mathcal{O}_{X}(\lfloor m^{\frac{k}{(n-k)(k+1)}} \rfloor A))$ with an element of $H^{0}(X,\mathcal{O}_{X}(\lfloor m^{\frac{n}{(n-k)(k+1)}} \rfloor f^{*}H))$ we see
%\begin{equation*}
%\mc(m\alpha) \geq Cm^{\frac{nk}{(n-k)(k+1)}}
%\end{equation*}
%for some positive constant $C$.  Rewriting
%\begin{equation*}
%\frac{nk}{(n-k)(k+1)} = \frac{n}{n-k} - \frac{n}{(n-k)(k+1)}
%\end{equation*}
%shows that the optimal value of $\epsilon_{n,k}$ is at most $\frac{n}{(n-k)(k+1)}$.
%\end{exmple}

\subsection{Continuity of mobility}

Theorem \ref{mobprecisebound} also allows us to prove the continuity of the mobility function.

\begin{thrm} \label{mobcontinuous}
Let $X$ be an integral projective variety.  Then the mobility function $\mob: N_{k}(X)_{\mathbb{Q}} \to \mathbb{R}$ can be extended to a continuous function on $N_{k}(X)$.
\end{thrm}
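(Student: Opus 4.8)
The strategy is to show that $\mob$ vanishes identically outside the big cone, is already locally uniformly continuous on its interior, and that these two descriptions glue continuously along $\partial\Eff_k(X)$. Concretely, I would define $\overline{\mob}\colon N_k(X)\to\mathbb R$ by letting $\overline{\mob}$ agree on the open big cone $\Eff_k(X)^{int}$ with the unique continuous extension of $\mob|_{\,\Eff_k(X)^{int}_{\mathbb Q}}$ produced below, and setting $\overline{\mob}\equiv 0$ on the complement of $\Eff_k(X)^{int}$. Since $N_k(X)_{\mathbb Q}$ is dense in $N_k(X)$, any continuous extension of $\mob$ is unique once it exists, so the only thing to prove is that $\overline{\mob}$ is continuous, and in view of the previous sentence this reduces to continuity at each boundary class $\alpha_0\in\partial\Eff_k(X)$.

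Next I would record the two easy inputs. If $\alpha\in N_k(X)_{\mathbb Q}$ is not pseudo-effective, no multiple of $\alpha$ has an effective representative, so $\mob(\alpha)=0$. If $\alpha$ is pseudo-effective but not big, fix a very ample $A$ and apply Corollary~\ref{iitakacor} to the classes $m\alpha$ with $s=s_m:=\lfloor m(\alpha\cdot A^k)/A^n\rfloor+1$ (reducing to the integral case via Lemma~\ref{rescalingmob}); since $\epsilon_{n,k}>0$ this gives $\mc(m\alpha)=O(m^{\frac{n}{n-k}-\epsilon_{n,k}})$ and hence $\mob(\alpha)=0$, consistently with $\overline{\mob}$. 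On the other hand, Theorem~\ref{mobcontbigcone} says $\mob$ is locally uniformly continuous on the big $\mathbb Q$-classes, so it extends uniquely to a continuous function on $\Eff_k(X)^{int}$; this is the extension used in the definition of $\overline{\mob}$.

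The heart of the matter is continuity of $\overline{\mob}$ at a boundary class $\alpha_0\in\partial\Eff_k(X)$: since $\overline{\mob}$ is continuous on the big cone, $\equiv 0$ off it, and big $\mathbb Q$-classes are dense among big classes, it suffices to show $\mob(\alpha)\to 0$ as $\alpha$ runs over big $\mathbb Q$-classes tending to $\alpha_0$. Fix a very ample $A$, and for a pseudo-effective class $\alpha$ set
\[
\lambda(\alpha):=\sup\{\lambda\geq 0 \;:\; \alpha-\lambda[A]^{n-k}\in\Eff_k(X)\}.
\]
The class $[A]^{n-k}$ is big (e.g.\ by Theorem~\ref{mobilityandbigness}, since a complete intersection of ample divisors has positive mobility), so a decomposition $\alpha_0=(\alpha_0-\lambda[A]^{n-k})+\lambda[A]^{n-k}$ with $\lambda>0$ would force $\alpha_0$ to be big; hence $\lambda(\alpha_0)=0$. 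The superlevel sets $\{\alpha:\lambda(\alpha)\geq c\}=\Eff_k(X)+c[A]^{n-k}$ are closed, so $\lambda$ is upper semicontinuous, and together with $\lambda\geq 0$ and $\lambda(\alpha_0)=0$ this yields $\lambda(\alpha)\to 0$ as $\alpha\to\alpha_0$. Now let $\alpha$ be a big integral class near $\alpha_0$ (the general rational case follows by homogeneity, Lemma~\ref{rescalingmob}). Since $\alpha$ is big and $A^k$ is a nonzero nef class, $\alpha\cdot A^k>0$, and $0<\lambda(\alpha)\leq(\alpha\cdot A^k)/A^n$. For large $m$ put $s_m:=\lceil m((\alpha\cdot A^k)/A^n+\lambda(\alpha))\rceil+1$ and $t_m:=\lceil m\lambda(\alpha)\rceil+1$; then $m\alpha\cdot A^k<s_m A^n$, one checks $t_m\leq s_m$ once $m\geq A^n/(\alpha\cdot A^k)$, and $m\alpha-t_m[A]^{n-k}$ is not pseudo-effective because $t_m>m\lambda(\alpha)=\lambda(m\alpha)$. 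Feeding this into Theorem~\ref{mobprecisebound}(3), dividing by $m^{\frac{n}{n-k}}/n!$ and letting $m\to\infty$, I obtain a bound of the shape
\[
\mob(\alpha)\;\leq\; C\,A^n\Bigl(\tfrac{\alpha\cdot A^k}{A^n}+\lambda(\alpha)\Bigr)^{\frac{n}{n-k}-\tau_{n,k}}\lambda(\alpha)^{\tau_{n,k}},
\]
with $C$ depending only on $n$ and $k$. Since $\alpha\cdot A^k$ stays bounded while $\lambda(\alpha)\to 0$ and $\tau_{n,k}>0$, the right-hand side tends to $0$; approximating big real classes by big rational ones and using continuity of $\overline{\mob}$ on the big cone then gives $\overline{\mob}(\alpha)\to 0=\overline{\mob}(\alpha_0)$, the desired continuity.

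I expect the third paragraph to be the main obstacle; the rest is bookkeeping on top of Corollary~\ref{iitakacor} and Theorem~\ref{mobcontbigcone}. The delicate point is to extract from Theorem~\ref{mobprecisebound}(3) a bound on $\mob(\alpha)$ that is simultaneously \emph{admissible} — the auxiliary integers $s,t$ must satisfy its hypotheses, which in particular needs the strict positivity $\alpha\cdot A^k>0$ of big classes against the nef class $A^k$ to arrange $t\leq s$ — and \emph{controlled near $\alpha_0$}, i.e.\ expressed through $\lambda(\alpha)$, which is bounded, vanishes at $\alpha_0$, and is upper semicontinuous. Everything else, including the bigness of $[A]^{n-k}$, is standard.
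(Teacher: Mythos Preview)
Your proposal is correct and follows essentially the same approach as the paper: both reduce to showing $\mob(\alpha)\to 0$ for big classes approaching the boundary, and both do this by feeding Theorem~\ref{mobprecisebound}(3) the integers $s\sim m\cdot(\alpha\cdot A^k)/A^n$ and $t$ controlled by how far $\alpha$ is from the boundary in the direction $[A]^{n-k}$. The only cosmetic difference is that you package this distance as the threshold function $\lambda(\alpha)$ and invoke its upper semicontinuity, whereas the paper fixes a target $\mu>0$, chooses $\delta$ small enough, and directly exhibits a neighborhood $U$ of $\alpha_0$ on which $\beta-\delta s[A]^{n-k}$ is not pseudo-effective (equivalently $\lambda(\beta)<\delta s$); the resulting estimates are the same.
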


\begin{proof}
Note that $\mob$ can be extended to a continuous function on the interior of $\Eff_{k}(X)$ by Theorem \ref{mobcontbigcone}.  Furthermore $\mob$ is identically $0$ on every element in $N_{k}(X)_{\mathbb{Q}}$ not contained in $\Eff_{k}(X)$.  Thus it suffices to show that $\mob$ approaches $0$ for classes approaching the boundary of $\Eff_{k}(X)$.

Let $\alpha$ be a point on the boundary of $\Eff_{k}(X)$.  Fix $\mu > 0$; we show that there exists a neighborhood $U$ of $\alpha$ such that $\mob(\beta) < \mu$ for any class $\beta \in U \cap N_{k}(X)_{\mathbb{Q}}$.

Fix a very ample divisor $A$ and a positive integer $s$ such that $\alpha \cdot A^{k} < \frac{s}{2}A^{n}$.  Choose $\delta$ sufficiently small so that
\begin{equation*}
n! 2^{kn+3n+1}  s^{\frac{n}{n-k}} A^{n} \delta^{\tau_{n,k}} < \mu.
\end{equation*}
Let $U$ be a sufficiently small neighborhood of $\alpha$ so that:
\begin{itemize}
\item $\beta \cdot A^{k} < sA^{n}$ for every $\beta \in U$, and
\item $\beta - \delta s [A]^{n-k}$ is not pseudo-effective for every $\beta \in U$.
\end{itemize}
Suppose now that $\beta \in U \cap N_{k}(X)_{\mathbb{Q}}$ and that $m$ is any positive integer such that $m\beta \in N_{k}(X)_{\mathbb{Z}}$.  Then:
\begin{itemize}
\item $m\beta \cdot A^{k} < smA^{n}$ and
\item $m\beta - \lceil \delta ms \rceil [A]^{n-k}$ is not pseudo-effective.
\end{itemize}
Theorem \ref{mobprecisebound} shows that
\begin{equation*}
\mc(m\beta) < 2^{kn+3n} (ms)^{\frac{n}{n-k} - \tau_{n,k}} (\lceil \delta ms \rceil)^{\tau_{n,k}} A^{n}.
\end{equation*}
When $m$ is sufficiently large, $\lceil \delta ms \rceil \leq 2 \delta ms$, so we obtain for such $m$
\begin{equation*}
\mc(m\beta) < 2^{kn+3n+1} m^{\frac{n}{n-k}} s^{\frac{n}{n-k}} A^{n}  \delta^{\tau_{n,k}} < \frac{\mu}{n!} m^{\frac{n}{n-k}}
\end{equation*}
showing that $\mob(\beta)<\mu$ as desired.
\end{proof}

%\begin{rmk}
%It is sometimes easier to compute the mobility count of families satisfying extra conditions (for example, smoothness or ACMness of each component of a general fiber).  If one formulates the condition so that it is preserved by
%\begin{itemize}
%\item closure of families,
%\item family sums,
%\item intersections against general very ample divisors, and
%\item pushforwards from subvarieties
%\end{itemize}
%then by an analogous construction one can define a ``mobility-type'' function which only considers families with this restriction.  These functions share many of the same formal properties as the mobility.  However, any improvements in computability seem more than offset by the loss of theoretical flexibility.
%\end{rmk}

\section{Examples of mobility} \label{mobilityexamplesection}

The mobility seems difficult to calculate explicitly.  By analogy with the volume, one wonders whether the mobility is related to intersection numbers for ``sufficiently positive'' classes (just as the volume of an ample divisor is a self-intersection product).  In particular, we ask:

\begin{ques} \label{intersectiontheoreticques}
Let $X$ be an integral projective variety and let $H$ be an ample Cartier divisor.  For $0<k<n$, is
\begin{equation*}
\mob(H^{n-k}) = \vol(H)?
\end{equation*}
\end{ques}

%An affirmative answer would imply that the ``optimal cycles'' with respect to the mobility count for $H^{n-k}$ are complete intersections of $(n-k)$ general elements of $|dH|$.

This question is vastly generalized by Question \ref{mainconj}, but already this particular case is very interesting.

%\begin{rmk} \label{intersectiontheoreticrmk}
%Note that the statement in Question \ref{intersectiontheoreticques} does not hold for point classes: for an ample divisor $H$ we have $\vol(H) = H^{n}$ but $\mob(H^{n}) = n!H^{n}$.  %This suggests that the most natural definition of the mobility for point classes should not involve the rescaling factor $n!$.
%\end{rmk}

\begin{exmple} \label{ciexample}
It is not hard to show that $\mob(H^{n-k}) \geq H^{n}$ for an ample divisor $H$.  Indeed, by homogeneity we may suppose that $H$ is very ample and that all the higher cohomology of multiples of $H$ vanishes.  We can find a basepoint free family of divisors of class $m[H]$ containing $h^{0}(X,mH) - 1$ general points of $X$.  By taking complete intersections, we obtain a family of class $m^{n-k}[H^{n-k}]$ going through $h^{0}(X,mH)-1-n+k$ general points of $X$.  Taking a limit shows the inequality.
\end{exmple}

In the remainder of this section we discuss two examples in detail.  We will work over the base field $\mathbb{C}$ to cohere with the cited references.

\subsection{Curves on $\mathbb{P}^{3}$}  \label{p3mobility}

Let $\ell$ denote the class of a line on $\mathbb{P}^{3}$ over $\mathbb{C}$.  The mobility of $\ell$ is determined by the following enumerative question: what is the minimal degree of a curve in $\mathbb{P}^{3}$ going through $b$ very general points?  The answer is unknown (even in the asymptotic sense).

\cite{perrin87} conjectures that the ``optimal'' curves are the complete intersections of two hypersurfaces of degree $d$.  Indeed, among all curves not contained in a hypersurface of degree $(d-1)$, \cite{gp77} shows that these complete intersections have the largest possible arithmetic genus, and thus conjecturally the corresponding Hilbert scheme has the largest possible dimension.  For a family $p$ of smooth curves, \cite{perrin87} uses the Gruson-Peskine bounds to prove:
\begin{equation*}
\mc(p) \leq \frac{1}{2}d^{3/2} + O(d).
\end{equation*}

One can give a lower bound for the mobility count by explicitly constructing families of cycles.  Complete intersections of two hypersurfaces of degree $d$ have degree $d^{2}$ and pass through $\approx \frac{1}{6}d^{3}$ general points.  Letting $d$ go to infinity, we find the lower bound
\begin{equation*}
1 \leq \mob(\ell)
\end{equation*}
and conjecturally equality holds.

\begin{thrm} \label{moblinep3}
Let $\ell$ be the class of a line on $\mathbb{P}^{3}$.  Then
\begin{equation*}
1 \leq \mob(\ell) < 3.54.
\end{equation*}
\end{thrm}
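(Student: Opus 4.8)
The lower bound $1\le\mob(\ell)$ has already been obtained in the discussion preceding the statement: a general complete intersection of two surfaces of degree $d$ has class $d^{2}\ell$ and moves in a family through $\binom{d+3}{3}-2\sim\frac{1}{6}d^{3}$ general points, so $\mc(d^{2}\ell)\ge\frac{1}{6}d^{3}-O(d^{2})$ and $\mob(\ell)\ge\lim_{d\to\infty}\frac{6(\frac{1}{6}d^{3}+O(d^{2}))}{(d^{2})^{3/2}}=1$ (compare Example \ref{ciexample}). The content of the theorem is the upper bound, and the plan is to establish an asymptotic estimate $\mc(m\ell)\le C\,m^{3/2}+O(m)$ with $6C<3.54$ and then read off $\mob(\ell)\le 6C$ from Definition \ref{mobdefn}.

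\emph{Step 1: reduction to irreducible families of curves of fixed degree.} Given a family $p\colon U\to W$ of effective $1$-cycles of class $m\ell$, decompose $U$ into irreducible components and further decompose the (generic) members into their irreducible parts; removing by Lemma \ref{ignorecomponents} any piece not dominating $\mathbb{P}^{3}$ and applying Corollary \ref{mobilitydecompositioncor} (and the evident variant for members), we obtain families $p_{i}\colon U_{i}\to W$ of integral curves dominating $\mathbb{P}^{3}$, of degrees $d_{i}$, with $\mc(p)\le\sum_{i}\mc(p_{i})$ and $\sum_{i}d_{i}\le m$ (the last because all coefficients and multiplicities in the decomposition are $\ge 1$ and their weighted sum of degrees is $m$). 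If we can show $\mc(q)\le Cd^{3/2}+O(d)$ for every family $q$ of integral degree-$d$ curves dominating $\mathbb{P}^{3}$, then, using that $x\mapsto x^{3/2}$ is superadditive, $\mc(p)\le\sum_{i}(Cd_{i}^{3/2}+O(d_{i}))\le C\big(\sum_{i}d_{i}\big)^{3/2}+O(m)\le Cm^{3/2}+O(m)$, so that $\mob(\ell)=\limsup_{m\to\infty}\frac{6\,\mc(m\ell)}{m^{3/2}}\le 6C$.

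\emph{Step 2: the core estimate.} We must show $\mc(q)\le Cd^{3/2}+O(d)$ for a family $q$ of integral degree-$d$ curves; this extends to arbitrary integral curves the bound $\mc(q)\le\frac{1}{2}d^{3/2}+O(d)$ of \cite{perrin87}, proved there only for families of \emph{smooth} curves. Suppose $\mc(q)=b$, so $b$ general points of $\mathbb{P}^{3}$ lie on an integral degree-$d$ curve $C$ in the family. Since a surface of degree $e$ through $b$ general points exists only when $b\le\binom{e+3}{3}-1$, the curve $C$ --- hence the generic member of $q$ --- lies on no surface of degree $<s$ for $s$ determined by $\binom{s+2}{3}\le b$, so $s\sim(6b)^{1/3}$. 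The Gruson--Peskine (Halphen) estimate \cite{gp77} then bounds the arithmetic genus by $p_{a}(C)\le 1+\frac{d}{2}\big(\frac{d}{s}+s-4\big)$. Feeding this into a bound for the dimension of a family of integral degree-$d$ curves of arithmetic genus at most $p_{a}$ in $\mathbb{P}^{3}$ --- of the form $\dim W\le f(d,p_{a})$ with $f$ linear in each variable --- and combining with $\dim W\ge 2\,\mc(q)=2b$ from Example \ref{mcandvar}, we are led to a constraint of the shape $2b\le\frac{d^{2}}{2s}+\frac{ds}{2}+O(d)$. Writing $b=\alpha d^{3/2}$ and $u=(6\alpha)^{1/3}$, so $s\sim u\,d^{1/2}$, this reduces up to lower-order terms to $2\alpha\le\frac{1}{2u}+\frac{u}{2}$; solving this (a quartic in $u$) bounds $\alpha$ by an explicit $\alpha_{0}$, and a careful accounting of the constants gives $6\alpha_{0}<3.54$, i.e.\ the estimate of Step 1 with $C=\alpha_{0}$.

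The main obstacle is Step 2, on two fronts. First, the dimension bound $\dim W\le f(d,p_{a})$ must be made uniform over singular integral curves: the normal-bundle and versality arguments underlying the smooth case of \cite{perrin87} have to be replaced by estimates for the Hilbert (or Chow) scheme of integral curves that allow singularities while keeping the correct dependence on $d$ and $p_{a}$. Second, the constants --- in the Gruson--Peskine bound, in $f$, and in the optimization of $2\alpha\le\frac{1}{2u}+\frac{u}{2}$ together with its error terms --- must be tracked carefully enough to fall below $3.54$; the resulting bound is certainly not sharp, in keeping with the Remark after Corollary \ref{iitakacor} and the conjectural value $\mob(\ell)=1$. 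By contrast, reducible and non-reduced cycles cause no difficulty: they are absorbed by the reduction of Step 1 together with superadditivity of $x\mapsto x^{3/2}$, since all coefficients and multiplicities are $\ge 1$ and so only shrink the relevant sum of degrees.
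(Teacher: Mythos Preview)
Your approach is genuinely different from the paper's. The paper does not attempt to extend Perrin's normal-bundle/Gruson--Peskine argument to singular curves at all; its one-line proof says to ``repeat the argument of Theorem \ref{mobprecisebound} with more careful constructions of families and better estimates.'' In other words, the paper's upper bound comes from the inductive hyperplane-slicing mechanism: given a family of curves of class $m\ell$, choose a hypersurface $H$ of well-chosen degree, split the mobility count into the contribution of the $0$-cycle intersection family $q$ on $H$ and the families $p_i$ of curves that happen to lie in $H$, and bound each recursively (ultimately via Corollary \ref{basicvarestimate}). The constant $3.54$ is then obtained by optimizing the degree of $H$ and tightening the crude estimates (e.g.\ replacing $\lceil s^c\rceil < 2s^c$ by the exact value, using the actual $h^0$ on projective space rather than Lemma \ref{basicsectionestimate}, etc.). This route involves no genus bounds, no Hilbert-scheme dimension estimates, and no reduction to integral curves.

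Your proposal, by contrast, tries to push Perrin's argument through for arbitrary integral curves. The reduction in Step~1 is fine, and the logic ``$C$ through $b$ general points $\Rightarrow$ $C$ lies on no surface of degree $<s$ with $\binom{s+2}{3}\le b$'' is correct and specializes to the generic member by semicontinuity. The genuine gap is the one you flag yourself: the bound $\dim W\le f(d,p_a)$ with $f$ linear in each variable, valid for families of \emph{singular} integral space curves. For smooth $C$ one has $\dim W\le h^0(N_{C/\mathbb{P}^3})$ and Riemann--Roch gives $\chi(N_C)=4d$, but controlling $h^1(N_C)$ (already delicate for smooth curves) has no evident analogue for singular curves, and Hilbert schemes of singular space curves are known to have components of unexpectedly large dimension. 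You have not indicated how to obtain such a bound, and without it the optimization in Step~2 is formal. The paper's slicing argument sidesteps this entirely: it never needs to know anything about the geometry of the individual curves beyond their degree, which is why it works uniformly but gives a worse constant than Perrin's $\frac{1}{2}d^{3/2}$ in the smooth case.
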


The proof is to simply repeat the argument of Theorem \ref{mobprecisebound} with more careful constructions of families and better estimates.

\subsection{Rational mobility of points} \label{ratmob0cycles}

In this section we relate rational mobility with the theory of rational equivalence of $0$-cycles.  In order to cohere with the cited references, we work only with normal integral varieties $X$ over $\mathbb{C}$ (although the results easily extend to a more general setting). %over an uncountable algebraically closed field.
We let $A_{0}(X)$ denotes the group of rational-equivalence classes of $0$-cycles on $X$.  We will denote the $r$th symmetric power of $X$ by $X^{(r)}$; by \cite[I.3.22 Exercise]{kollar96} this is the component of $\Chow(X)$ parametrizing $0$-cycles of degree $r$.

\begin{rmk}
The universal family of $0$-cycles of degree $r$ (in the sense of Definition \ref{familydef}) is not $u: X^{\times r} \to X^{(r)}$ but a flattening of this map.  However, note that the rational mobility computations are the same whether we work with $u$ or a flattening by Lemma \ref{opensubsetsforcycles}.  For simplicity we will work with $u$ and $X^{(r)}$ despite the slight incongruity with Definition \ref{familydef}.
\end{rmk}

We start by recalling the results of \cite{roitman72} concerning $A_{0}(X)$.  %(Although \cite{roitman72} assumes the base field has characteristic $0$, we will only use results that are characteristic independent.)
Consider the map $\gamma_{m,n}: X^{(m)} \times X^{(n)} \to A_{0}(X)$ sending $(p,q) \mapsto p-q$.  \cite[Lemma 1]{roitman72} shows that the fibers of $\gamma_{m,n}$ are countable unions of closed subvarieties.

A subset $V \subset A_{0}(X)$ is said to be irreducible closed if it is the $\gamma_{m,n}$-image of an irreducible closed subset $Y$ of $X^{(m)} \times X^{(n)}$ for some $m$ and $n$.  The dimension of such a subset $V$ is defined to be the dimension of $Y$ minus the minimal dimension of a component of a fiber of $\gamma_{m,n}|_{Y}$.  \cite[Lemma 9]{roitman72} shows that the dimension is independent of the choice of $Y$, $m$, and $n$.

\begin{lem} \label{containmentdimineq}
Let $V,W \subset A_{0}(X)$ be irreducible closed subsets with $V \subsetneq W$.  Then $\dim(V) < \dim(W)$.
\end{lem}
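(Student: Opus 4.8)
The plan is to compute $\dim V$ using a representing variety that sits inside a representing variety for $W$, and then to verify that such a restriction strictly lowers the quantity $\dim Y-(\textrm{generic relative fibre dimension of }\gamma|_{Y})$ which by definition computes the dimension.

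First I would fix irreducible closed subvarieties $Y_{W}\subset X^{(m)}\times X^{(n)}$ with $\gamma_{m,n}(Y_{W})=W$ and $Y_{V}\subset X^{(m')}\times X^{(n')}$ with $\gamma_{m',n'}(Y_{V})=V$. Since $V\subseteq W$ is non-empty, every class in sight has degree $m-n=m'-n'$; set $N:=m+n'=m'+n$ and
\[
Z:=\{\,y\in Y_{W}:\gamma_{m,n}(y)\in V\,\}.
\]
Writing $\gamma_{m,n}(p,q)=\gamma_{m',n'}(p',q')$ as rational triviality of the degree-zero $0$-cycle $(p+q')-(p'+q)$ exhibits $Z$ as the first projection of $\Phi^{-1}\big(\gamma_{N,N}^{-1}(0)\big)$, where $\Phi\colon Y_{W}\times Y_{V}\to X^{(N)}\times X^{(N)}$ is the addition morphism $\big((p,q),(p',q')\big)\mapsto(p+q',\,p'+q)$. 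By \cite[Lemma 1]{roitman72} the set $\gamma_{N,N}^{-1}(0)$ is a countable union of closed subvarieties, hence so is its $\Phi$-preimage, and consequently $Z=\bigcup_{i}C_{i}^{\circ}$ for countably many irreducible constructible $C_{i}^{\circ}\subseteq Y_{W}$ with $\gamma_{m,n}(C_{i}^{\circ})\subseteq V$ and $\bigcup_{i}\gamma_{m,n}(C_{i}^{\circ})=V$ (the last equality because $\gamma_{m,n}(Z)=V\cap W=V$). Writing $C_{i}:=\overline{C_{i}^{\circ}}$ and keeping only those $C_{i}$ maximal under inclusion, we still have $V\subseteq\bigcup_{i}\gamma_{m,n}(C_{i})$, and the retained $C_{i}$ are pairwise incomparable.

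Now let $e:=\dim Y_{W}-\dim W$ be the generic relative fibre dimension of $\gamma_{m,n}|_{Y_{W}}$, and fix a maximal $C_{i}$. For $y$ in a dense open subset of $C_{i}$ the whole fibre $F_{y}:=\gamma_{m,n}^{-1}(\gamma_{m,n}(y))\cap Y_{W}$ is contained in $Z=\bigcup_{j}C_{j}$, so by uncountability of $K$ its component through $y$ is contained in a single $C_{j}$, which must be $C_{i}$ since $y$ is general in $C_{i}$; by upper semicontinuity of fibre dimension this component has dimension $\ge e$, so the generic relative fibre dimension of $\gamma_{m,n}|_{C_{i}}$ is at least $e$. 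Moreover $C_{i}\neq Y_{W}$: otherwise $C_{i}^{\circ}$ would contain a dense open $O\subseteq Y_{W}$ and, sliding a general $y\in Y_{W}$ along $F_{y}$ into $O$, one would get $\gamma_{m,n}(y)\in V$ on a dense open of $Y_{W}$, forcing $W\subseteq V$ — impossible. Hence $\dim C_{i}\le\dim Y_{W}-1$ and
\[
\dim\gamma_{m,n}(C_{i})\ \le\ \dim C_{i}-e\ \le\ (\dim Y_{W}-1)-e\ =\ \dim W-1 .
\]
Since $V\subseteq\bigcup_{i}\gamma_{m,n}(C_{i})$ with each member of dimension $\le\dim W-1$, and Roitman's dimension is monotone under inclusion and sends a countable union of irreducible closed subsets to the supremum of the members' dimensions — both facts being consequences of the well-definedness result \cite[Lemma 9]{roitman72} together with the uncountability of $K$ — we conclude $\dim V\le\dim W-1<\dim W$.

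The geometric content above is simple; the real work is making Roitman's dimension behave correctly under these operations. One must justify that $Z$ is genuinely a countable union of irreducible (constructible) subvarieties, that fibre dimension of $\gamma_{m,n}$ is upper semicontinuous — which requires stratifying the fibres over the countably many families of fibre components supplied by \cite[Lemma 1]{roitman72} and using uncountability of $K$ to reduce to finitely many on a dense open — and the monotonicity and countable-union properties of $\dim$ invoked above. These are all of the same nature as, and are established alongside, \cite[Lemmas 1 and 9]{roitman72}, so the cleanest route is to cite and lightly adapt that machinery rather than rebuild the formalism; the well-definedness in \cite[Lemma 9]{roitman72} is exactly what licenses computing $\dim V$ from the representative $C_{i}$ instead of $Y_{V}$.
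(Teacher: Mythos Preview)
Your proposal is correct and follows the same essential strategy as the paper: represent $V$ by irreducible closed pieces sitting inside the chosen representative $Y_{W}$ for $W$, observe these pieces are strictly smaller than $Y_{W}$, and conclude via the fibre-dimension comparison.  The paper's proof is considerably shorter because it invokes two further results of Roitman directly: \cite[Lemma 5]{roitman72} to see that the preimage $Z=\gamma_{m,n}^{-1}(V)\cap Y_{W}$ is a countable union of closed subsets (which you re-derive via the addition map $\Phi$ and \cite[Lemma 1]{roitman72}), and \cite[Lemma 6]{roitman72} to extract a single component $Y'\subset Z$ with $\gamma_{m,n}(Y')=V$, after which $\dim Y'<\dim Y_{W}$ finishes immediately.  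Your route instead bounds $\dim\gamma_{m,n}(C_{i})$ for every maximal component and then appeals to monotonicity and a countable-union bound; this is a legitimate variant but, as you note yourself, it leans on the same Roitman machinery (well-definedness in \cite[Lemma 9]{roitman72}, uncountability of the base field) and is more laborious.

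Two small points worth tightening: since $Y_{V}$ is projective the first projection $Y_{W}\times Y_{V}\to Y_{W}$ is proper, so your $Z$ is in fact a countable union of \emph{closed} (not merely constructible) irreducible subsets; this makes the passage from $C_{i}^{\circ}$ to $C_{i}=\overline{C_{i}^{\circ}}$ harmless and renders the ``sliding along $F_{y}$'' argument for $C_{i}\neq Y_{W}$ unnecessary --- one simply has $C_{i}\subset Z$, so $C_{i}=Y_{W}$ would force $W\subset V$.  Second, your fibre-dimension step shows that the component through a general $y\in C_{i}$ has dimension $\ge e$, but Roitman's dimension uses the \emph{minimal} component dimension of a fibre; closing this gap is exactly what \cite[Lemma 6]{roitman72} packages, so citing it (as the paper does) is the cleanest fix.
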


\begin{proof}
Let $Z \subset X^{(m)} \times X^{(n)}$ be an irreducible closed subset whose $\gamma_{m,n}$-image is $W$.  Let $Y \subset Z$ denote the preimage of $V$; \cite[Lemma 5]{roitman72} shows that $Y$ is a countable union of closed subsets.  By \cite[Lemma 6]{roitman72} some component $Y' \subset Y$ dominates $V$.  But then $\dim(Y') < \dim(Z)$, proving the statement.
\end{proof}

We are mainly interested in when $A_{0}(X)$ is an irreducible closed set.  This is equivalent to the following notion:

\begin{defn}
$A_{0}(X)$ is representable if there is a positive integer $r$ such that the addition map $a_{r}: X^{(r)} \to A_{0}(X)_{\deg r}$ is surjective.
\end{defn}

An alternative definition is that $\gamma_{m,n}$ is surjective for some choices of $m,n$; the two notions are equivalent (at least for smooth varieties) by \cite[Theorem 10.12]{voisin10}.
We now relate these notions to the rational mobility of $0$-cycles on $X$.

\begin{prop} \label{representabilityandratmob}
Let $X$ be a normal integral projective variety over $\mathbb{C}$ and let $\alpha$ denote the class of a point in $N_{0}(X)$.  Then the following are equivalent:
\begin{enumerate}
\item $A_{0}(X)$ is representable.
\item $\ratmob(\alpha) = n!$.
\item $\ratmob(\alpha) > n!/2$.
\end{enumerate}
\end{prop}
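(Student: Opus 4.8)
The plan is to prove the cycle of implications $(1)\Rightarrow(2)\Rightarrow(3)\Rightarrow(1)$, with the last being the one requiring genuine work. The implications $(1)\Rightarrow(2)$ and $(2)\Rightarrow(3)$ are essentially formal once one understands the rational mobility count for $0$-cycles. Suppose first that $A_0(X)$ is representable, so that $a_r\colon X^{(r)}\to A_0(X)_{\deg r}$ is surjective for some $r$. The members of the rational family we want are the $0$-cycles of degree $r$ lying in a fixed rational equivalence class $\xi$; the locus of such cycles is a fiber $a_r^{-1}(\xi)$, which by \cite[Lemma 1]{roitman72} is a countable union of closed subvarieties, and surjectivity of $a_r$ forces $\dim a_r^{-1}(\xi) \geq rn - (\text{number of points needed to cut } A_0(X)_{\deg r} \text{ down to a point})$. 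Since $\dim A_0(X)_{\deg r} = 0$ when it is representable — wait, it is a countable union of points only if $X$ has trivial $A_0$; in general $A_0(X)_{\deg r}$ is itself a quotient of $X^{(r)}$ and can be positive-dimensional — so instead I should argue as follows: a general fiber of $a_r$ has dimension exactly $rn - \dim A_0(X)_{\deg r}$, and the rational family of these fibers, as $\xi$ varies, has total space mapping to $X^{\times r}$ with image of dimension $rn$, i.e. dominant. By the dimension count in Example \ref{mcandvar} applied in the rational setting, this forces $\rmc \geq rn/(n-0) = r$; dividing the family member (degree $r$) by $r$ and taking the limsup in the definition of $\ratmob$ recovers the maximal value. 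A cleaner route: show directly that if $a_r$ is surjective then for all sufficiently divisible $m$ one has $\rmc(m\alpha) \geq mn/n! \cdot (\text{something})$ approaching $n!\cdot(m^{n}/n!) = m^n$ after normalization — concretely, $\rmc(m\alpha) \geq$ the dimension of a general fiber of $a_{mr}$, which grows like $mn\cdot r$, wait this needs the normalization $m^{n/(n-k)} = m^{n}$ since $k=0$. I would verify that $\rmc(N\alpha) = Nn$ asymptotically (modulo lower order) exactly when $a_r$ is surjective for some $r$, using that the fiber dimension of $a_N$ is $Nn - \dim A_0(X)_{\deg N}$ and this last term is bounded independent of $N$ once $A_0(X)$ is representable (it stabilizes). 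Then $\ratmob(\alpha) = \limsup Nn/(N^n/n!)$ — hold on, that diverges unless $n=1$. I must re-examine: for $k=0$, $n/(n-k) = 1$, so the normalization is $N^{1}/n! = N/n!$, and $\rmc(N\alpha)/(N/n!) \to n! \cdot \lim \rmc(N\alpha)/N$. So the claim $\ratmob(\alpha)=n!$ is equivalent to $\rmc(N\alpha)/N \to 1$, i.e. $\rmc(N\alpha) \sim N$. And $\rmc(N\alpha)$ is at most $N$ always (a cycle of degree $N$ can pass through at most $N$ general points — any effective $0$-cycle of degree $N$ is supported on at most $N$ points). So the whole proposition is: $\rmc(N\alpha) = N$ for $N\gg 0$ $\iff$ $A_0(X)$ representable; and $(3)$ says $\rmc(N\alpha) \geq N/2$ infinitely often.

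For $(1)\Rightarrow(2)$: if $a_r$ is surjective, take $N = mr$; a general fiber of $a_N\colon X^{(N)} \to A_0(X)_{\deg N}$ then has dimension $Nn - \dim A_0(X)_{\deg N}$, and since $A_0$ is representable the image stabilizes so $\dim A_0(X)_{\deg N}$ is bounded by a constant $e$ for all large $N$; thus the rational family of these fibers is dominant onto $X^{\times(N - e')}$ for the appropriate $e'$, giving $\rmc(N\alpha) \geq N - e'$, hence $\rmc(N\alpha)/N \to 1$ and $\ratmob(\alpha) = n!$. $(2)\Rightarrow(3)$ is trivial.

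The substance is $(3)\Rightarrow(1)$, and the key tool should be Lemma \ref{containmentdimineq}: irreducible closed subsets of $A_0(X)$ satisfy a strict dimension-drop under proper inclusion, so there is no infinite ascending chain of such subsets of bounded dimension and hence a maximal one. Suppose $(3)$ holds: for infinitely many $N$, there is a rational family of effective $0$-cycles of degree $N$, all in some fixed class $\xi_N$, passing through at least $N/2$ general points of $X$. Consider, for each $r$, the image $Y_r := a_r(X^{(r)}) \subset A_0(X)$ (modulo the translation identifying degrees, or work with $\gamma_{r,r}$-type images so everything lives in $A_0(X)_{\deg 0}$ after subtracting a fixed cycle); these $Y_r$ form an ascending chain of irreducible closed subsets of $A_0(X)$, and $A_0(X)$ is representable iff this chain stabilizes, iff the $\dim Y_r$ are bounded. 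The plan is to show that $(3)$ forces $\dim Y_r$ to be bounded. Here is the mechanism: a rational family of degree-$N$ cycles through $N/2$ general points means the fiber $a_N^{-1}(\xi_N)$ (inside $X^{(N)}$) has dimension at least $(N/2)\cdot n$ — because the incidence condition "contains $b$ general points" cuts down $X^{(N)}$ by at most $bn$ and must leave something nonempty mapping onto the $b$ general points. Therefore $\dim a_N^{-1}(\xi_N) \geq Nn/2$, so $\dim Y_N \leq \dim X^{(N)} - \dim a_N^{-1}(\xi_N) \leq Nn - Nn/2 = Nn/2$. That bound is not constant, so I need to iterate: the point is that $\dim Y_r$ is \emph{subadditive} up to the fiber correction, $Y_{r+s} \supseteq Y_r + Y_s$ (sum of cycles), so $\dim Y_{r+s} \geq \dim Y_r + \dim Y_s - (\text{fiber dimension of the addition})$, and combined with the a priori bound $\dim Y_r \leq$ (something sublinear would be needed). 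The real argument of Roitman's circle: if $A_0(X)$ is \emph{not} representable then the $\dim Y_r \to \infty$, and in fact (by Lemma \ref{containmentdimineq} plus the chain being strictly increasing infinitely often) $\dim Y_r \geq$ some increasing function; meanwhile the existence of big fibers $a_N^{-1}(\xi_N)$ of dimension $\geq Nn/2$ says $\dim Y_N$ is \emph{small}, $\leq Nn/2$; one then needs a complementary lower bound $\dim Y_N \gtrsim Nn/2 \cdot c$ with $c$ close to $1$ to get a contradiction, which presumably comes from Roitman's argument that for non-representable $A_0(X)$ the generic fiber of $a_r$ has dimension bounded \emph{above} by something like $rn - r$ — i.e. the fibers cannot be "too big" — precisely because otherwise you could represent. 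I expect the main obstacle is exactly this: translating "$A_0(X)$ not representable" into a quantitative upper bound on $\dim a_r^{-1}(\text{general point})$ (equivalently, a lower bound on $\dim Y_r$) strong enough to contradict the hypothesis $\rmc(N\alpha) \geq N/2$. The slack between $n!/2$ and $n!$ in the statement is presumably exactly what makes this work: a factor-of-$2$ deficiency in the mobility count is incompatible with non-representability but a larger deficiency would not be, and the proof must exploit subadditivity of $\dim Y_r$ in $r$ together with Lemma \ref{containmentdimineq} to bootstrap the factor $1/2$ up to a contradiction. I would set this up by fixing the sequence $N_i \to \infty$ realizing $(3)$, extracting the chain of subvarieties $a_{N_i}^{-1}(\xi_{N_i})$, and running Roitman's dimension bookkeeping (\cite[Lemmas 5, 6, 9]{roitman72}) to force stabilization.
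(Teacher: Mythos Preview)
Your $(1)\Rightarrow(2)$ can be made much cleaner, but the real issue is a genuine gap in $(3)\Rightarrow(1)$: you never find the mechanism, and the mechanism you are groping toward (fiber-dimension bounds, subadditivity, bootstrapping) is not the one that works.

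For $(1)\Rightarrow(2)$, there is no need for fiber dimensions at all. If $a_{r}$ is surjective, fix any $m>0$ and any class $\tau\in A_{0}(X)_{\deg(m+r)}$. For an arbitrary effective $0$-cycle $Z$ of degree $m$, the class $\tau-[Z]$ has degree $r$, hence by surjectivity of $a_{r}$ is represented by some effective $T_{Z}$. Then $Z+T_{Z}\in\tau$ for every $Z$, so the single rational equivalence class $\tau$ contains an effective cycle through any $m$ chosen points. Thus $\rmc((m+r)\alpha)\geq m$, and dividing by $(m+r)/n!$ and sending $m\to\infty$ gives $\ratmob(\alpha)=n!$ directly.

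For $(3)\Rightarrow(1)$, you correctly set up the chain $Y_{r}=a_{r}(X^{(r)})$ and note that non-representability together with Lemma~\ref{containmentdimineq} forces $\dim Y_{r}$ to be \emph{strictly increasing} in $r$. But your attempt to derive a contradiction from $\rmc(N\alpha)\geq N/2$ via the bound $\dim Y_{N}\leq Nn/2$ cannot work: a linear bound on $\dim Y_{N}$ does not contradict strict increase (or even unboundedness). The missing idea is a \emph{comparison between two different $Y_{r}$}. If $\rmc(m\alpha)=b$, then there is a fixed class $\tau$ of degree $m$ such that every effective degree-$b$ cycle $p$ admits a completion $p+q\in\tau$ with $q$ effective of degree $m-b$. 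Rewriting, $\tau - a_{b}(X^{(b)}) \subseteq a_{m-b}(X^{(m-b)})$. The left side is a translate of $-Y_{b}$ and hence an irreducible closed subset of dimension $\dim Y_{b}$; by Lemma~\ref{containmentdimineq} applied to this containment, $\dim Y_{b}\leq \dim Y_{m-b}$. Since $r\mapsto\dim Y_{r}$ is strictly increasing, this forces $b\leq m-b$, i.e.\ $\rmc(m\alpha)\leq m/2$ for every $m$, whence $\ratmob(\alpha)\leq n!/2$. This is exactly where the threshold $n!/2$ comes from, and no bootstrapping or subadditivity is needed.
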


\begin{proof}
(1) $\implies$ (2).  Suppose that $A_{0}(X)$ is representable.  There is some positive integer $r$ such that the addition map $a_{r}: X^{(r)} \to A_{0}(X)_{\deg r}$ is surjective.  Fix $m>0$ and choose some class $\tau \in A_{0}(X)_{\deg(m+r)}$.  For any effective $0$-cycle $Z$ of degree $m$, there is an effective $0$-cycle $T_{Z}$ of degree $r$ such that $T_{Z} + Z \in \tau$.   As $Z \in X^{(m)}$ varies, the effective cycles $Z + T_{Z}$ are rationally equivalent, showing that $\rmc((m+r)\alpha) \geq m$ and $\ratmob(\alpha) = n!$.

(3) $\implies$ (1).  Suppose that $A_{0}(X)$ is not representable.  For every $m$, non-representability implies that $a_{m}(X^{(m)}) +  p \subsetneq a_{m+1}(X^{(m+1)})$ for some closed point $p$: if we had equality for every point, we would also have equality for every $m' > m$ and for every point, and one easily deduces the surjectivity of $a_{m}$.  Thus by Lemma \ref{containmentdimineq} $\dim(a_{m}(X^{(m)}))$ strictly increases in $m$.

Suppose that $\rmc(m\alpha) = b$.  This implies that there is some rational equivalence class $\tau$ of degree $m$ so that for any $p \in X^{(b)}$, there is an element $q \in X^{(m-b)}$ such that $p+q \in \tau$.  In particular, the subset $\tau - X^{(b)} \subset A_{0}(X)_{\deg m-b}$ is contained in $a_{m-b}(X^{(m-b)})$.  By Lemma \ref{containmentdimineq}, $\dim(a_{b}(X^{(b)})) \leq \dim(a_{m-b}(X^{(m-b)}))$.  But since these dimensions are strictly increasing in $m$ we must have $m \geq 2b$.  Thus we see that $\ratmob(\alpha) \leq n!/2$, proving the statement.
\end{proof}

\begin{exmple} \label{0cyclepositiveratmob}
Let $X$ be an integral projective variety of dimension $n$ and let $\alpha$ be the class of a point in $N_{0}(X)$.  Let $A$ be a very ample divisor on $X$; for sufficiently large $m$ we have $h^{0}(X,\mathcal{O}_{X}(mA)) \approx \frac{1}{n!}A^{n}$.  By taking complete intersections of $n$ elements of $|mA|$, we see that $\ratmob(\alpha) \geq 1$.
\end{exmple}

\begin{exmple}
Let $X$ be a smooth surface over $\mathbb{C}$ and let $\alpha$ be the class of a point.  By combining Example \ref{0cyclepositiveratmob} with Proposition \ref{representabilityandratmob} we see that there are two possibilites:
\begin{itemize}
\item $A_{0}(X)$ is representable and $\ratmob(\alpha) = 2$.
\item $A_{0}(X)$ is not representable and $\ratmob(\alpha) = 1$.
\end{itemize}
\end{exmple}

\section{Intersection-theoretic volume function} \label{inttheorysection}

In this section we study the function $\widehat{\vol}$ defined in the introduction.
In contrast to the mobility, one can readily compute this function in simple examples.

%\begin{exmple}
%If $\alpha \in N_{0}(X)$, then $\widehat{\vol}(\alpha)$ is exactly the degree of $\alpha$.
%\end{exmple}

\begin{exmple} \label{ktexample}
Suppose that $B$ is a big and nef $\mathbb{R}$-Cartier divisor and that $\alpha = [B^{n-k}]$.  We claim that  $\widehat{\vol}(\alpha) = \vol(B)$.

Indeed, suppose that $\phi: Y \to X$ is a birational map and $A$ is a big and nef $\mathbb{R}$-Cartier divisor such that $\phi_{*}[A^{n-k}] \preceq \alpha$.  Recall that by the Khovanskii-Teissier inequalities (see for example \cite[Corollary 1.6.3 and Remark 1.6.5]{lazarsfeld04})
\begin{equation*}
A^{n-k} \cdot \phi^{*}B^{k} \geq \vol(A)^{n-k/n} \vol(B)^{k/n}.
\end{equation*}
Then we have
\begin{align*}
\vol(A) \leq & \left( \frac{A^{n-k} \cdot \phi^{*}B^{k}}{\vol(B)^{k/n}} \right)^{n/n-k} \\
\leq & \left( \frac{ \alpha \cdot B^{k}}{\vol(B)^{k/n}} \right)^{n/n-k} \\
= & \vol(B).
\end{align*}
Since clearly $\widehat{\vol}(\alpha) \geq \vol(B)$, we obtain the claimed equality.
\end{exmple}

\begin{exmple}
Suppose that $X$ is smooth and that $\alpha \in \Eff_{1}(X)$ is a curve class.  Then \cite{lx15} shows that $\widehat{\vol}(\alpha)$ agrees with the expression in \cite{xiao15} and so can be computed once one knows the nef cone of $X$:
\begin{equation*}
\widehat{\vol}(\alpha) = \inf_{A \textrm{ big and nef }\mathbb{R}\textrm{-divisor}} \left( \frac{A \cdot \alpha}{\vol(A)^{1/n}} \right)^{n/n-1}.
\end{equation*}
In fact, for curve classes the supremum in the definition is actually achieved by a divisor on $X$ -- there is no need to pass to a birational model.  This leads to a robust notion of a Zariski decomposition for curve classes (see \cite{lx15} for more details).
\end{exmple}

%Our main goal is to show the continuity of $\widehat{\vol}$.

The following properties are elementary.

\begin{lem} \label{volbasicprops}
Let $X$ be an integral projective variety of dimension $n$ and suppose $\alpha \in N_{k}(X)$ for $0 \leq k < n$.
\begin{enumerate}
\item For any positive constant $c$ we have $\widehat{\vol}(c\alpha) = c^{n/n-k} \widehat{\vol}(\alpha)$.
\item If $\alpha$ is big then $\widehat{\vol}(\alpha) > 0$.
\item If $\alpha$ is pseudo-effective, then for any class $\beta \in N_{k}(X)$ we have $\widehat{\vol}(\alpha + \beta) \geq \widehat{\vol}(\beta)$.
\end{enumerate}
\end{lem}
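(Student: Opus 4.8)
The plan is to read off all three statements directly from the definition of $\widehat{\vol}(\alpha)$ as a supremum of $A^{n}$ over pairs $(\phi\colon Y\to X,\,A)$ with $A$ big and nef and $\phi_{*}[A^{n-k}]\preceq\alpha$; in each case the point is a simple manipulation of the set of admissible pairs. For (1), I would observe that rescaling the divisor gives a bijection between admissible pairs for $\alpha$ and for $c\alpha$: if $\phi_{*}[A^{n-k}]\preceq\alpha$ then $c^{1/(n-k)}A$ is again big and nef, $\phi_{*}[(c^{1/(n-k)}A)^{n-k}]=c\,\phi_{*}[A^{n-k}]\preceq c\alpha$ (the cone $\Eff_{k}(X)$ being stable under positive scaling), and $(c^{1/(n-k)}A)^{n}=c^{n/(n-k)}A^{n}$. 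Since $t\mapsto t^{1/(n-k)}$ is a bijection of $\mathbb{R}_{>0}$ this correspondence is onto, and taking suprema yields $\widehat{\vol}(c\alpha)=c^{n/(n-k)}\widehat{\vol}(\alpha)$ (with both sides $0$ when the admissible set is empty).

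For (2), I would simply exhibit one admissible pair with positive self-intersection. Take $Y=X$, $\phi=\mathrm{id}$, and fix an ample Cartier divisor $H$ on $X$; a general complete intersection of $n-k$ members of a large multiple of $H$ is an effective $k$-cycle, so $[H^{n-k}]\in\Eff_{k}(X)$. As $\alpha$ is big it lies in the interior of $\Eff_{k}(X)$, so $\alpha-t[H^{n-k}]$ is pseudo-effective for all sufficiently small $t>0$. Then $A:=t^{1/(n-k)}H$ is ample, hence big and nef, $\phi_{*}[A^{n-k}]=t[H^{n-k}]\preceq\alpha$, and $A^{n}=t^{n/(n-k)}H^{n}>0$, so $\widehat{\vol}(\alpha)\ge t^{n/(n-k)}H^{n}>0$. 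For (3), I would note that every admissible pair for $\beta$ is admissible for $\alpha+\beta$: if $\phi_{*}[A^{n-k}]\preceq\beta$ then $(\alpha+\beta)-\phi_{*}[A^{n-k}]=\alpha+(\beta-\phi_{*}[A^{n-k}])$ is a sum of two pseudo-effective classes, hence pseudo-effective since $\Eff_{k}(X)$ is a convex cone; taking suprema over the respective nested families of pairs gives $\widehat{\vol}(\alpha+\beta)\ge\widehat{\vol}(\beta)$, trivially so when the right side is $0$.

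None of these arguments presents a genuine obstacle — this is exactly why the lemma is labelled elementary. The only points that deserve an explicit word are that $[H^{n-k}]$ is pseudo-effective (true by taking complete intersections of general very ample members) and that a class in the interior of a full-dimensional cone absorbs small multiples of any fixed class (immediate from openness), both of which are standard.
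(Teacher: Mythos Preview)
Your proposal is correct and is exactly the elementary verification the paper has in mind; the paper does not actually write out a proof of this lemma, merely stating that ``the following properties are elementary,'' and your arguments are the natural ones any reader would supply from the definition.
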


%\begin{proof}
%To see (1), note that for any birational map $\phi: Y \to X$ and for any big and nef $\mathbb{R}$-Cartier divisor $A$ on $Y$ we have $\phi_{*}[A^{n-k}] \preceq \alpha$ if and only if $\phi_{*}[(c^{1/n-k}A)^{n-k}] \preceq c \alpha$.

%For (2), if $\alpha$ is big then there is some ample divisor $A$ on $X$ such that $[A^{n-k}] \preceq \alpha$.  So certainly $\widehat{\vol}(\alpha) > 0$.

%To see (3), note that if $\phi_{*}[A^{n-k}] \preceq \beta$ then certainly $\phi_{*}[A^{n-k}] \preceq \alpha$ as well.
%\end{proof}

The main point is to understand the behavior of $\widehat{\vol}$ as we approach the boundary.  This is controlled by the following theorem.

\begin{thrm} \label{volmobineq}
Let $X$ be an integral projective variety of dimension $n$.  For any $\alpha \in \Eff_{k}(X)$ we have
\begin{equation*}
\widehat{\vol}(\alpha) \leq \mob(\alpha).
\end{equation*}
\end{thrm}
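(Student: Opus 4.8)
The plan is to reduce the statement to a single inequality: for every birational model $\phi\colon Y\to X$ and every big nef $\mathbb{R}$-Cartier divisor $A$ on $Y$ with $\phi_{*}[A^{n-k}]\preceq\alpha$, I will show $\mob(\alpha)\geq A^{n}$; taking the supremum over all such $(\phi,A)$ then gives the theorem. (If no such pair exists, the supremum defining $\widehat{\vol}(\alpha)$ is over the empty set, so $\widehat{\vol}(\alpha)=0\leq\mob(\alpha)$ and there is nothing to prove; thus we may assume one exists.) The geometric content I will draw on is entirely contained in Example \ref{ciexample} (complete intersections of ample divisors pass through the expected number of general points), Proposition \ref{mobpushforwardprop} (mobility does not decrease under generically finite pushforward), and Lemma \ref{mobadditive} (superadditivity of $\mob$).

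\textbf{Reduction to ample $\mathbb{Q}$-divisors.} Using Fujita approximation, valid in all characteristics by \cite{takagi07}, together with continuity of the volume, I would replace $(\phi,A)$ by $(\psi\colon Z\to X,\,P)$ with $P$ an ample $\mathbb{Q}$-Cartier divisor on $Z$ and $P^{n}$ arbitrarily close to $A^{n}=\vol(A)$: Fujita produces $\mu\colon Z\to Y$ and an ample $\mathbb{Q}$-Cartier $P$ with $\mu^{*}A=P+E$, $E$ effective, and $\vol(P)=P^{n}\geq\vol(A)-\epsilon$. To see that the new pair is still admissible, expand
\begin{equation*}
(\mu^{*}A)^{n-k}-P^{n-k}=\sum_{j=0}^{n-k-1}(\mu^{*}A)^{\,n-k-1-j}\cdot E\cdot P^{\,j},
\end{equation*}
a sum of products of nef classes with the effective class $E$, hence pseudo-effective; combining this with the projection formula $\mu_{*}[(\mu^{*}A)^{n-k}]=[A^{n-k}]$ and the stability of pseudo-effectivity under pushforward (as in Lemma \ref{surjpushforward}) gives $\psi_{*}[P^{n-k}]=\phi_{*}\mu_{*}[P^{n-k}]\preceq\phi_{*}[A^{n-k}]\preceq\alpha$. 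So it suffices to show $\mob(\alpha)\geq P^{n}$ whenever $\psi\colon Z\to X$ is birational, $P$ is ample $\mathbb{Q}$-Cartier on $Z$, and $\psi_{*}[P^{n-k}]\preceq\alpha$.

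\textbf{Reduction to rational $\alpha$, and the core estimate.} For any rational class $\alpha'$ with $\alpha'-\psi_{*}[P^{n-k}]\in\Eff_{k}(X)$ the pair $(\psi,P)$ remains admissible, so $\widehat{\vol}(\alpha')\geq P^{n}$; since rational classes are dense in the full-dimensional cone $\psi_{*}[P^{n-k}]+\Eff_{k}(X)$ (using \cite{fl13}), I may choose such $\alpha'$ converging to $\alpha$, so by the continuity of $\mob$ on $N_{k}(X)$ (Theorem \ref{mobcontinuous}) it is enough to treat $\alpha\in N_{k}(X)_{\mathbb{Q}}$. Now set $\beta:=\psi_{*}[P^{n-k}]\in N_{k}(X)_{\mathbb{Q}}$ and $\gamma:=\alpha-\beta\in\Eff_{k}(X)_{\mathbb{Q}}$. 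Example \ref{ciexample}, applied on $Z$ with $P$ and combined with the homogeneity of $\mob$, gives $\mob_{Z}([P^{n-k}])\geq P^{n}$, and Proposition \ref{mobpushforwardprop} applied to $\psi$ gives $\mob_{X}(\beta)\geq\mob_{Z}([P^{n-k}])\geq P^{n}$. Fix a big class $\omega\in N_{k}(X)_{\mathbb{Q}}$ (e.g.\ $[H^{n-k}]$ for $H$ ample). For each $\delta\in\mathbb{Q}_{>0}$ the class $\gamma+\delta\omega$ is big, hence has an effective multiple, and $\beta$ (the pushforward of a complete intersection of ample divisors) has an effective multiple, so Lemma \ref{mobadditive} yields
\begin{equation*}
\mob(\alpha+\delta\omega)=\mob\bigl(\beta+(\gamma+\delta\omega)\bigr)\geq\mob(\beta)+\mob(\gamma+\delta\omega)\geq P^{n}.
\end{equation*}
Letting $\delta\to0$ and invoking the continuity of $\mob$ once more gives $\mob(\alpha)\geq P^{n}$, and taking the supremum over $(\psi,P)$ completes the argument.

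\textbf{Main obstacle.} The one genuinely geometric input is already isolated in Example \ref{ciexample}, and the structural properties of $\mob$ used here are exactly Proposition \ref{mobpushforwardprop} and Lemma \ref{mobadditive}; so no new hard estimate is needed. The real work is the bookkeeping in the two reductions—checking that admissibility $\psi_{*}[P^{n-k}]\preceq\alpha$ survives Fujita approximation and survives perturbing $\alpha$ to a nearby rational class. This rests on the stability of pseudo-effectivity under proper pushforward and on the elementary but slightly fiddly expansion of $(\mu^{*}A)^{n-k}-P^{n-k}$ into pseudo-effective pieces, and I expect this pseudo-effectivity bookkeeping, rather than any single computation, to be the point demanding the most care.
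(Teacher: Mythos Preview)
Your proposal is correct and follows essentially the same approach as the paper's (two-line) proof, which simply says to combine Example \ref{ciexample} with Proposition \ref{mobpushforwardprop}; you have filled in the bookkeeping that the paper leaves implicit (passing from big nef $\mathbb{R}$-divisors to ample $\mathbb{Q}$-divisors, and from $\phi_{*}[A^{n-k}]$ up to $\alpha$ via monotonicity and continuity). The Fujita-approximation step is valid but heavier than necessary---since $\mob$ is continuous (Theorem \ref{mobcontinuous}) and big nef divisors are limits of ample $\mathbb{Q}$-divisors, the inequality $\mob([A^{n-k}])\geq A^{n}$ already passes from ample to big nef directly without invoking Fujita.
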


\begin{proof}
Combine the computation $\mob(H^{n-k}) \geq \vol(H)$ with the fact that mobility can only increase upon pushforward.
%Consider varying all birational morphisms $\phi: Y \to X$ and all big and nef divisors $A$ satisfying $\phi_{*}[A^{n-k}] \preceq \alpha$.  By combining Lemma \ref{mobadditive} with the continuity of $\mob$, we see that $\mob(\alpha) \geq \mob(\phi_{*}[A^{n-k}])$ whenever $\alpha \succeq \phi_{*}[A^{n-k}]$.  Thus
%\begin{align*}
%\mob(\alpha) & \geq \sup_{\phi, A} \mob(\phi_{*}[A^{n-k}])  \\
%& \geq \sup_{\phi,A} \mob([A^{n-k}]) \textrm{ by Proposition \ref{mobpushforwardprop}.} \\
%& \geq \sup_{\phi,A} \vol(A) \textrm{ by Example \ref{ciexample}.}
%\end{align*}
%This latter quantity is $\widehat{\vol}(\alpha)$.
\end{proof}

\begin{cor}
Let $X$ be an integral projective variety.  Then $\widehat{\vol}$ is a continuous function on $N_{k}(X)$ for any $0 \leq k < n$.
\end{cor}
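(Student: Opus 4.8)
The plan is to mimic the structure of the continuity proof for the mobility function (Theorem \ref{mobcontinuous}): away from the boundary of $\Eff_k(X)$ the function is easily seen to be continuous, it vanishes identically outside $\Eff_k(X)$, and the only real content is to show it approaches $0$ as classes approach $\partial \Eff_k(X)$. First I would observe that Lemma \ref{volbasicprops} supplies exactly the hypotheses of Lemma \ref{easyconelem}: homogeneity of degree $\tfrac{n}{n-k}$, strict positivity on the interior of the pseudo-effective cone, and the monotonicity $\widehat{\vol}(\alpha+\beta)\geq \widehat{\vol}(\beta)$ when $\alpha$ is pseudo-effective. Hence $\widehat{\vol}$ is locally uniformly continuous on the interior of $\Eff_k(X)$. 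Since $\widehat{\vol}$ is $0$ on all classes not in $\Eff_k(X)$ (the defining supremum is over an empty set), it remains only to handle approach to the boundary.

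For the boundary estimate I would use Theorem \ref{volmobineq}, which gives $\widehat{\vol}(\alpha)\leq \mob(\alpha)$ for every pseudo-effective $\alpha$, together with the continuity of $\mob$ already established in Theorem \ref{mobcontinuous}. Concretely: let $\alpha_0 \in \partial\Eff_k(X)$ and fix $\mu>0$. Since $\mob$ is continuous on $N_k(X)$ and $\mob(\alpha_0)=0$ (because $\alpha_0$ is not big, by Theorem \ref{mobilityandbigness} combined with Theorem \ref{mobcontinuous}), there is a neighborhood $V$ of $\alpha_0$ in $N_k(X)$ on which $\mob < \mu$. Then for any pseudo-effective $\beta\in V$ we get $\widehat{\vol}(\beta)\leq \mob(\beta) < \mu$, and for $\beta \in V$ not pseudo-effective $\widehat{\vol}(\beta)=0<\mu$. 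Thus $\widehat{\vol}\to 0$ along the boundary, which is exactly the missing ingredient. Combining the interior continuity, the vanishing outside the cone, and this boundary behavior, $\widehat{\vol}$ is continuous on all of $N_k(X)$.

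I would present this as a short argument, since each piece is already available: the interior part is Lemma \ref{easyconelem} applied via Lemma \ref{volbasicprops}, and the boundary part is a one-line consequence of Theorem \ref{volmobineq} and the continuity of $\mob$. The only point requiring a little care is the bookkeeping at the boundary: one must check that a neighborhood in the full space $N_k(X)$ (not just in the cone) works, so that sequences approaching $\alpha_0$ from outside $\Eff_k(X)$ are also controlled — but this is immediate since $\widehat{\vol}$ is literally zero there. The main (very minor) obstacle is simply making sure the topology of $\Eff_k(X)$ as a full-dimensional salient cone lets us glue the locally uniformly continuous interior function to the identically-zero exterior function continuously across $\partial\Eff_k(X)$; the boundary limit statement is precisely what guarantees this gluing is continuous.
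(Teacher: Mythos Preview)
Your proof is correct and follows essentially the same approach as the paper: interior continuity via Lemma \ref{easyconelem} applied through Lemma \ref{volbasicprops}, and the boundary limit via the inequality $\widehat{\vol}\leq\mob$ (Theorem \ref{volmobineq}) together with the continuity of $\mob$ (Theorem \ref{mobcontinuous}). The paper's proof is just a terse two-sentence version of exactly what you wrote.
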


\begin{proof}
Lemma \ref{volbasicprops} verifies the hypotheses of Lemma \ref{easyconelem}, showing that $\widehat{\vol}$ is continuous on the interior of the big cone.  Theorem \ref{mobcontinuous} and Theorem \ref{volmobineq} show that $\widehat{\vol}$ must limit to $0$ as we approach the boundary of the pseudo-effective cone.
\end{proof}

\section{Weighted mobility} \label{weightedmobilitysection}

One approach for calculating the mobility is to study the geometry of the blow-up of $X$ through general points, but unfortunately this does not seem very effective.  One can obtain a closer relationship using a ``weighted'' mobility count giving singularities a higher contribution.  %The key idea is that the singular points of a family of cycles should contribute more to the mobility count -- this better reflects the intersection theory on the blow-up, as the strict transform of a cycle which is singular at a point will have larger intersection against the exceptional divisor than the strict transform of a smooth cycle
This idea was suggested to me by R.~Lazarsfeld and this section is based off his suggestion.

%\begin{exmple} \label{blowupandmobex}
%One could try to compute the mobility of the line class $\ell$ on $\mathbb{P}^{n}$ as follows.  %Consider curves of degree $d^{n-1}$.  Conjecturally the mobility count of $d^{n-1} \ell$ is $b \approx \frac{d^n}{n!}$ given by taking complete intersection curves.
%Consider all curves of degree $d^{n-1}$.  Let $\phi: Y \to \mathbb{P}^{n}$ be the blow-up of $b$ general points with exceptional divisors $E_{i}$.
%Choose positive constants $\{ a_{i} \}_{i=1}^{b}$ such that $D = d \phi^{*}H - \sum_{i} a_{i}E_{i}$ is nef.  If $C'$ is the strict transform of a degree $d^{n-1}$ curve containing all $b$ points then
%\begin{equation*}
%0 \leq D \cdot C' \leq d^{n} - \sum_{i} a_{i}. %\qquad \qquad D^{n} = d^{n} - \sum_{i} a_{i}^{n}.
%\end{equation*}
%If we could find a nef divisor on $Y$ satisfying $\sum_{i} a_{i} > d^{n}$, then we would know that $\mc(d^{n-1} \ell) < b$.  

%Unfortunately, it is impossible to find such a nef divisor for values of $b$ near the conjectural mobility count $\approx \frac{d^{n}}{n!}$.  Indeed, the condition on the $a_{i}$ is then incompatible with the self-intersection condition $0 \leq D^{n} = d^{n} - \sum a_{i}^{n-1}$.  In other words, the strict transform of a smooth degree $d^{n-1}$ curve through $\approx \frac{d^{n}}{n!}$ points is necessarily in the interior of the pseudo-effective cone.  While one can obtain upper bounds for $\mc(\ell)$ on $\mathbb{P}^{3}$ in this way, they are significantly worse than Theorem \ref{moblinep3}.
%\end{exmple}

\subsection{Seshadri constants}
We start with a few reminders about Seshadri constants at general points; see \cite{lazarsfeld04} or \cite{7authors} for a more thorough introduction to the area.

\begin{defn}
Let $X$ be an integral projective variety of dimension $n$ and let $A$ be an ample Cartier divisor on $X$.  Fix distinct closed reduced points $\{x_{i} \}_{i = 1}^{b}$ in the smooth locus of $X$.  Set $\phi: Y \to X$ to be the blow-up of the $x_{i}$ and let $E$ denote the sum of all the exceptional divisors.  The Seshadri constant of $A$ along the $\{ x_{i} \}$ is
\begin{equation*}
\varepsilon( \{ x_{i} \}, A) := \max  \left\{ r \in \mathbb{R}_{\geq 0}  \left| \phi^{*}A - r E \textrm{ is nef} \right. \right\}.
\end{equation*}
\end{defn}

We define $\varepsilon_{b}(A)$ to be the supremum over all sets of $b$ distinct closed points $\{ x_{i} \}_{i=1}^{b}$ in the smooth locus of $X$ of $\varepsilon(\{ x_{i} \}, A)$.  When we are working over an uncountable field there is actually a set of points achieving this supremum, but we will not need this fact.

It is an important but difficult problem to precisely establish the value of $\varepsilon_{b}(A)$.  The following estimate gives a good asymptotic bound on $\varepsilon_{b}(A)$.  It is certainly well-known to experts and the first variant for higher dimension varieties seems to have appeared in \cite{angelini97}.

\begin{prop} \label{seshadricalculationveryample}
Let $X$ be an integral projective variety of dimension $n$ over an uncountable algebraically closed field and let $A$ be a very ample divisor on $X$.  Suppose that $b \leq t^{n}A^{n}$ for a positive integer $t$.  Fix general points $\{ x_{i} \}_{i=1}^{b}$ on $X$.  Then
\begin{equation*}
\frac{1}{t} \leq \varepsilon(\{ x_{i} \},A) \leq \frac{(A^{n})^{1/n}}{b^{1/n}}.
\end{equation*}
In particular $\frac{1}{t} \leq \varepsilon_{b}(A) \leq \frac{(A^{n})^{1/n}}{b^{1/n}}$.
\end{prop}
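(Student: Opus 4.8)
The plan is to treat the two inequalities separately. The upper bound holds for \emph{any} configuration of $b$ distinct points in $X_{\mathrm{sm}}$ and is a self-intersection computation; the lower bound is the real content, and it is where both the ``general points'' hypothesis and the uncountable ground field enter.

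For the upper bound, write $\varepsilon = \varepsilon(\{x_{i}\}, A)$. Since nefness is a closed condition on the one-parameter family $\phi^{*}A - rE$, the divisor $\phi^{*}A - \varepsilon E$ is itself nef, so $(\phi^{*}A - \varepsilon E)^{n} \geq 0$. As $\phi^{*}A$ restricts trivially to each exceptional $\mathbb{P}^{n-1}$, the exceptional divisors are pairwise disjoint, and $E_{i}^{n} = (-1)^{n-1}$, all mixed terms vanish and this self-intersection equals $A^{n} - \varepsilon^{n}b$. Hence $\varepsilon \leq (A^{n}/b)^{1/n}$, and the same bound for $\varepsilon_{b}(A)$ follows by taking the supremum over configurations.

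For the lower bound I would first produce one especially favourable configuration. Since $tA$ is very ample, embedding $X$ by $|tA|$ and composing with a general linear projection yields a finite separable morphism $f\colon X \to \mathbb{P}^{n}$ with $f^{*}\mathcal{O}(1)\cong\mathcal{O}_{X}(tA)$ and $\deg f = (tA)^{n} = t^{n}A^{n}\geq b$. For general $q\in\mathbb{P}^{n}$ the fibre $f^{-1}(q)$ consists of $\deg f$ distinct reduced points, all in $X_{\mathrm{sm}}$ (it avoids the proper closed set $f(X_{\mathrm{sing}})$) and at each of which $f$ is étale; pick $b$ of them as $x_{1},\dots,x_{b}$. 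Let $\sigma\colon Z\to\mathbb{P}^{n}$ be the blow-up at $q$ with exceptional divisor $E_{0}$, so that $\sigma^{*}\mathcal{O}(1)-E_{0}$ is basepoint free (it is the linear system of proper transforms of hyperplanes through $q$), hence nef. Blowing up $X$ at the whole fibre gives $\phi'\colon Y'\to X$ with a morphism $g'\colon Y'\to Z$; since $f$ is étale along $f^{-1}(q)$ we get $g'^{*}(\sigma^{*}\mathcal{O}(1)-E_{0}) = \phi'^{*}(tA)-E_{\mathrm{all}}$, which is nef. Pulling $\phi^{*}(tA)-E$ back from $Y=\mathrm{Bl}_{x_{1},\dots,x_{b}}X$ to $Y'$ gives $\phi'^{*}(tA)-E_{\leq b} = \bigl(\phi'^{*}(tA)-E_{\mathrm{all}}\bigr)+E_{>b}$, a nef class plus an effective class supported over the discarded fibre points. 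Testing against an arbitrary irreducible curve $C\subset Y$ — the case $C$ inside an exceptional divisor being handled directly — by lifting to its strict transform on $Y'$, the correction term meets that transform non-negatively, for otherwise $C$ would be a point; hence $\phi^{*}(tA)-E$ is nef on $Y$, i.e. $\varepsilon(\{x_{i}\},A)\geq 1/t$, and in particular $\varepsilon_{b}(A)\geq 1/t$.

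To pass from this one configuration to a general one, I would invoke lower semicontinuity of the Seshadri function on the irreducible parameter space of $b$-tuples of distinct points of $X_{\mathrm{sm}}$: the locus where $\varepsilon(\{x_{i}\},A)<1/t$ is then a countable union of closed subsets, each proper because the configuration built above avoids them, so over an uncountable field a general choice of $b$ points satisfies $\varepsilon(\{x_{i}\},A)\geq 1/t$. The main obstacle is the lower bound, and the crux is the degree-$t^{n}A^{n}$ cover $f\colon X\to\mathbb{P}^{n}$, which reduces the uniform nefness statement on the blow-up of $X$ to the trivial case of a single-point blow-up of $\mathbb{P}^{n}$; the delicate points are verifying that a general linear projection is separable in positive characteristic and that a general fibre is reduced, contained in $X_{\mathrm{sm}}$, and consists of étale points.
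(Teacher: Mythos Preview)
The paper does not actually give a proof of this proposition; it only remarks that the estimate ``is certainly well-known to experts and the first variant for higher dimension varieties seems to have appeared in \cite{angelini97},'' and then states the result without argument. So there is nothing in the paper to compare your proof against.

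That said, your proof is essentially correct and follows the standard line of argument. A few comments. The upper bound is exactly the usual top self-intersection computation and needs no change. For the lower bound, the projection trick via $|tA|$ is precisely the idea attributed to \cite{angelini97}; you have identified the right mechanism, namely that \'etaleness along a general fibre lets you identify $\mathrm{Bl}_{f^{-1}(q)}X$ with the fibre product $X\times_{\mathbb{P}^{n}}\mathrm{Bl}_{q}\mathbb{P}^{n}$ and hence pull back the obvious nef class $\sigma^{*}\mathcal{O}(1)-E_{0}$. Your reduction from $b$ points to the full fibre of $\deg f\geq b$ points is also fine; an equivalent and slightly cleaner phrasing is to note that for any subset $\{x_{1},\dots,x_{b}\}\subset f^{-1}(q)$ one has
\[
\varepsilon(\{x_{1},\dots,x_{b}\},A)\;\geq\;\varepsilon(f^{-1}(q),A)\;\geq\;\tfrac{1}{t},
\]
the first inequality coming directly from the curve-theoretic description $\varepsilon(\{x_{i}\},A)=\inf_{C}\frac{A\cdot C}{\sum_{i}\mult_{x_{i}}C}$, since enlarging the set of points can only increase the denominator. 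This avoids the curve-by-curve analysis on $Y$ versus $Y'$, though what you wrote is also correct.

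The one genuinely delicate point, which you flag yourself, is separability of a general linear projection in positive characteristic. This does hold: for $X\subset\mathbb{P}^{N}$ and a general center $\Lambda$, the fibre over a general $q\in\mathbb{P}^{n}$ is $X\cap\langle\Lambda,q\rangle$ for a general $(N-n)$-plane $\langle\Lambda,q\rangle$, and iterated Bertini for hyperplane sections of $X_{\mathrm{sm}}$ (valid in all characteristics for the complete linear system of hyperplanes) gives a reduced zero-dimensional intersection. Since $f(X_{\mathrm{sing}})$ is a proper closed subset, the general fibre lies in $X_{\mathrm{sm}}$ as required. Finally, your semicontinuity argument to pass from one configuration to a very general one is the standard countable-union-of-closed-loci argument and uses the uncountability hypothesis exactly where it should.
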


\subsection{Loci of points with multiplicity}

If $Z$ is an integral projective variety and $z \in Z$ is a reduced closed point, we denote by $\mult(Z,z)$ the multiplicity of $Z$ at $z$ (that is, the multiplicity of the maximal ideal $\mathfrak{m}_{Z,z}$ in the local ring $\mathcal{O}_{Z,z}$).  %Recall the following facts:
%\begin{itemize}
%\item If $Z$ is irreducible, then $\mult(Z,z) = m \mult(Z_{red},z)$ where $m$ is the length of the Artinian ring $\mathcal{O}_{Z,Z_{red}}$.
%\item If $Z$ is reducible, then $\mult(Z,z) = \sum_{i} \mult(Z_{i},z)$ where we sum over all components $Z_{i}$ of $Z$.
%\end{itemize}

More generally, we define the multiplicity of a $k$-cycle $V = \sum_{i} a_{i}V_{i}$ at a reduced closed point $v \in \Supp(V)$ via the expression $\sum a_{i} \mult(V_{i},v)$.  Note that this definition is compatible with the blow-up definition of multiplicity. %: if $\phi: Y \to \Supp(V)$ denotes the blow-up of the reduced closed point $v$ and $V'$ denotes the strict transform of the cycle $V$, then $\mult(V,v) = (-1)^{k-1} E^{k} \cdot V'$ where $E$ is the exceptional divisor.

\begin{lem}
Suppose that $W$ is an integral variety.  Let $p: U \to W$ be a family of effective $k$-cycles on $X$ and let $U_{[w]}$ denote the cycle-theoretic fiber above $w$.  The function on reduced closed points
\begin{equation*}
u \mapsto \mult(U_{[f(u)]},u)
\end{equation*}
is upper semi-continuous.
\end{lem}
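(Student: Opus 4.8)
The plan is to show that the function $u \mapsto \mult(U_{[p(u)]}, u)$ is upper semi-continuous by reducing to the well-known upper semi-continuity of multiplicity in flat families of subschemes, applied one component of $U$ at a time. First I would observe that since $U$ has finitely many components $U_i$, each flat of relative dimension $k$ over $W$, and the multiplicity of the cycle-theoretic fiber $U_{[w]} = \sum_i a_i Z_i(w)$ at a point $u$ is the finite sum $\sum_i a_i \mult(Z_i(w), u)$ (with the convention that $\mult$ is $0$ at a point not on the support), it suffices to treat a single irreducible family $p_i: U_i \to W$ and prove that $u \mapsto \mult(Z_i(p_i(u)), u)$ is upper semi-continuous; a finite nonnegative-integer-weighted sum of upper semi-continuous functions is again upper semi-continuous. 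So from now on I would assume $U$ is integral and flat of relative dimension $k$ over $W$.

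Next I would invoke the standard fact that for a flat family, the Hilbert polynomial of the fibers is locally constant, and more precisely that the multiplicity of the fiber scheme $U_w = w \times_W U$ at a point is upper semi-continuous as the point varies in the total space $U$. The cleanest route is via the blow-up characterization of multiplicity already alluded to in the paragraph preceding the lemma: $\mult(Z, z)$ is the normalized leading coefficient of the Hilbert–Samuel polynomial of $\mathcal{O}_{Z,z}$ with respect to $\mathfrak{m}_{Z,z}$, equivalently an intersection number on the exceptional divisor of the blow-up of $Z$ at $z$. One can then either cite the semicontinuity of Hilbert–Samuel multiplicity in families (e.g. the fact that $\mathrm{mult}$ is constructible and lower-dimensional on closed subsets, as in Bennett or the treatment of multiplicity along the base of a flat morphism), or give the argument directly: form the relative blow-up of $U$ along the relative diagonal-type locus, restrict to fibers, and use flatness to see that the relevant intersection number on the exceptional fiber can only jump up on closed subsets. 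The key input is that for a flat family $p_i: U_i \to W$ with $U_i$ integral, the function $w \mapsto \mathrm{mult}(U_{i,w}, z(w))$ at the generic point-type section is generically equal to a fixed value and the locus where it is larger is closed; combining over a covering of $U_i$ by sections gives upper semi-continuity on all of $U_i$.

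I expect the main obstacle to be the bookkeeping needed to pass from the scheme-theoretic fiber $U_{i,w}$ (which may be non-reduced) to the cycle-theoretic fiber $Z_i(w)$ (its underlying fundamental cycle), since the lemma is stated for the cycle-theoretic fibers while flatness and Hilbert-polynomial semicontinuity live most naturally at the level of schemes. The resolution is that over a flat family, for $w$ in a dense open subset of $W$ the fiber $U_{i,w}$ is generically reduced along each of its components, so its fundamental cycle has coefficient $1$ on each component; shrinking $W$ we may arrange $U_{i,w}$ is reduced for general $w$, and then by Proposition \ref{opensubsetsforcycles}-type reasoning (or just by noting upper semi-continuity is a local, shrinkable property on the total space) it is enough to prove the statement over such an open set, where $Z_i(w) = U_{i,w}$ as schemes and the scheme-theoretic result applies verbatim. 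A secondary, purely technical point is handling points $u$ lying over the non-flat locus or on several components at once, but these are absorbed by the additivity over components and the convention that $\mult$ vanishes off the support.
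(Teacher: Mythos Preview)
Your core approach---reduce to a single irreducible flat component $U_i \to W$ and then argue via the blow-up characterization of multiplicity that fiber multiplicities are upper semi-continuous---is exactly the paper's. The paper compresses this to one line: blow up the reduced diagonal in $U \times_{W} U$, so that the exceptional fiber over $u \in \Delta \cong U$ is the projectivized tangent cone of $U_{p(u)}$ at $u$, and then invoke upper semi-continuity of the degree (with multiplicity) of the fibers of the exceptional divisor over $U$. Your phrase ``form the relative blow-up of $U$ along the relative diagonal-type locus'' is the same construction.

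However, your ``main obstacle'' paragraph contains an actual error. First, the obstacle you flag is illusory: by the associativity formula for Hilbert--Samuel multiplicities, for any equidimensional scheme $Z$ and any closed point $z \in Z$ one has $\mult(Z,z) = \mult([Z],z)$. Since the scheme-theoretic fibers $U_{i,w}$ of a flat map of relative dimension $k$ are equidimensional, their multiplicity at $u$ already agrees with the multiplicity of the cycle-theoretic fiber $Z_i(w)$ at $u$, with no shrinking needed. Second, and more seriously, your proposed resolution---shrink $W$ to an open subset where fibers are reduced and declare that ``upper semi-continuity is a local, shrinkable property''---does not work. Upper semi-continuity asserts that $\{u : \mult(U_{[p(u)]},u) \geq c\}$ is closed in all of $U$; establishing this only over the preimage of a dense open subset of $W$ says nothing about points lying over the complement, and those special fibers are precisely where the multiplicity can jump. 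Upper semi-continuity is local in the sense of being checkable on an open \emph{cover}, not on a single dense open. Delete that paragraph and replace it with the associativity-formula observation (or simply note that the blow-up construction directly computes the cycle multiplicity), and your argument is complete and matches the paper.
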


This lemma is proved by blowing up the reduced diagonal in $U \times U$ and using the fact that multiplicities of fibers in the exceptional locus can only jump up in closed subsets.

%\begin{proof}
%Let $Y$ denote the blow-up of the reduced diagonal $\Delta_{red}$ on $U \times U$ with exceptional divisor $E$.  By composing the blow-down with $p \times id$ we obtain $g: Y \to W \times U$.  Consider the flattening $\widetilde{g}: \widetilde{Y} \to \widetilde{W \times U}$ as in \cite[Th\'eor\`eme 5.2.2]{gr71} and let $h_{W}: \widetilde{W \times U} \to W \times U$ denote the birational morphism and $h_{Y}: \widetilde{Y} \to Y$ denote the restriction of the projection.

%Note that the effective Cartier divisor $h_{Y}^{*}E$ defines a subscheme of $\widetilde{Y}$ such that the restriction of $\widetilde{g}$ to this subscheme has fibers of pure dimension $k$.  For a reduced closed point $u \in U$, the multiplicity $\mult(U_{[f(u)]},u)$ coincides with $(-1)^{k-1} h_{Y}^{*}E^{k} \cdot E_{u}$, where $E_{u}$ denotes the $\widetilde{g}$-fiber of $h_{Y}^{*}E$ above any closed point $q \in \widetilde{W \times U}$ satisfying $h_{W}(q) = (p(u),u)$.  Using generic flatness of $\widetilde{g}|_{h_{Y}^{*}E}$ one sees that the multiplicity is constant for an open subset of $U$ due to the invariance of intersections in flat families.  Repeating the argument and using Noetherian induction one sees that the multiplicity can only jump up in closed subsets due to the relative anti-ampleness of $h_{Y}^{*}E|_{h_{Y}^{*}E}$.
%\end{proof}

\begin{defn}
Let $X$ be an integral projective variety and let $p: U \to W$ denote a family of effective $k$-cycles on $X$.  Let $U_{\mu} \subset U$ denote the closed subset consisting of reduced closed points with multiplicity in the corresponding cycle-theoretic fiber at least $\mu$.  If $p_{\mu}: U_{\mu} \to W$ denotes the restriction of $p$, we define
\begin{equation*}
\mc(p;\mu) := \mc(p_{\mu}).
\end{equation*}
If $\alpha \in N_{k}(X)_{\mathbb{Z}}$, we define
\begin{equation*}
\mc(\alpha;\mu) := \sup_{p} \mc(p;\mu)
\end{equation*}
as we vary $p$ over all families of effective $k$-cycles representing $\alpha$.  (If there are no such families, we set $\mc(\alpha;\mu)=0$.)
\end{defn}

We note the following easy properties.

\begin{lem} \label{rescalingweightcount}
Let $X$ be an integral projective variety of dimension $n$ and let $\alpha \in N_{k}(X)_{\mathbb{Z}}$.  Then for any positive integer $\mu$:
\begin{enumerate}
\item If $\beta$ is represented by an effective cycle then $\mc(\alpha + \beta;\mu) \geq \mc(\alpha;\mu)$.
\item If both $\alpha$ and $\beta$ are represented by effective cycles then $\mc(\alpha + \beta;\mu) \geq \mc(\alpha;\mu) + \mc(\beta;\mu)$.
\item For any $r \in \mathbb{Z}_{>0}$ we have $\mc(\alpha;\mu) \leq \mc(r\alpha;r\mu)$.
\end{enumerate}
\end{lem}

\begin{proof}
The first two statements are obvious using the family sum construction since the multiplicity of a cycle at a point can only increase upon the addition of an effective cycle.  For the third, if a family $p$ represents $\alpha$, then by rescaling the coefficients by $r$ we obtain a family representing $r\alpha$, and it is clear the multiplicities go up by a factor of $r$.
\end{proof}

\subsection{Weighted mobility count}

\begin{defn} \label{wmcdef}
Let $X$ be an integral projective variety of dimension $n$ and let $\alpha \in N_{k}(X)_{\mathbb{Q}}$.  Define the weighted mobility count of $\alpha$ to be
\begin{equation*}
\wmc(\alpha) = \sup_{\mu} \mc(\mu \alpha; \mu).
\end{equation*}
where we vary $\mu$ over all positive integers such that $\mu \alpha \in N_{k}(X)_{\mathbb{Z}}$.
\end{defn}

We next show that $\wmc(\alpha)$ is always finite.  This implies that we can always choose a multiplicity $\mu$ maximizing the weighted mobility count, and in particular, the weighted mobility count can be computed by a double supremum as in the introduction (by combining Lemma \ref{wmcgrowth} with Lemma \ref{rescalingweightcount}).  In fact, we prove a weak upper bound which also gives the correct asymptotic rate of growth.

\begin{lem} \label{wmcgrowth}
Let $X$ be an integral projective variety of dimension $n$ and let $\alpha \in N_{k}(X)_{\mathbb{Q}}$.  Fix a very ample Cartier divisor $A$ on $X$.  Then
\begin{equation*}
\wmc(\alpha) \leq \sup \left\{ A^{n}, \left( \frac{2}{(A^{n})^{1/n}} \right)^{nk/n-k} (A^{k} \cdot \alpha)^{n/n-k} \right\}.
\end{equation*}
\end{lem}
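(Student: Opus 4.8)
The plan is to bound $\mc(\mu\alpha;\mu)$ uniformly in $\mu$ by reducing to a statement about divisors through points with high multiplicity, which in turn is controlled by Seshadri constants. Fix a family $p\colon U\to W$ of effective $k$-cycles representing $\mu\alpha$, and let $p_\mu\colon U_\mu\to W$ be the restriction to the locus of points with multiplicity $\geq\mu$ in the corresponding cycle-theoretic fiber. Set $b=\mc(p_\mu)=\mc(p_\mu)$, so that the $b$-fold fiber product of $U_\mu$ over $W$ dominates $X^{\times b}$; thus $b$ general points $x_1,\dots,x_b$ of $X$ lie on some effective cycle $Z$ of class $\mu\alpha$ with $\mult(Z,x_i)\geq\mu$ for every $i$. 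We may assume $b>A^n$, since otherwise the claimed bound holds trivially from the first term of the supremum.

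The key step is to squeeze $Z$ into a hypersurface and extract a Seshadri-type inequality. Arguing as in the proof of Proposition \ref{mcbound}, I would first bound the dimension of the space of sections of $\mathcal{I}_Z^{(\geq\mu)}(dA)$ — sections of $dA$ vanishing to order $\geq\mu$ at each $x_i$ is too strong, so instead one uses that $Z$ itself, having class $\mu\alpha$, is set-theoretically cut out by a divisor $D\in|\lceil d\rceil A|$ once $d$ is of order $(\mu\alpha\cdot A^k)^{1/(n-k)}$, via Lemma \ref{basicsectionestimate}. The divisor $D$ then passes through each $x_i$ with multiplicity $\geq\mu$ (since $D\supseteq Z$ set-theoretically and $Z$ has multiplicity $\geq\mu$ there, hence so does $D$). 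Pulling back to the blow-up $\phi\colon Y\to X$ of the points $\{x_i\}$, the strict transform of $D$ is an effective divisor in the class $\phi^*(\lceil d\rceil A)-\mu E$, forcing this class to be pseudo-effective and in particular $\varepsilon(\{x_i\},\lceil d\rceil A)\geq \mu$, i.e. $\varepsilon(\{x_i\},A)\geq\mu/\lceil d\rceil$. Now invoke Proposition \ref{seshadricalculationveryample}: since $\varepsilon_b(A)\leq (A^n)^{1/n}/b^{1/n}$ whenever $b\leq t^nA^n$ for a suitable $t$, comparing the lower and upper bounds on the Seshadri constant yields $b\leq \bigl(\lceil d\rceil/\mu\bigr)^n A^n$, and substituting $d\sim (\mu\alpha\cdot A^k)^{1/(n-k)}=\mu^{1/(n-k)}(\alpha\cdot A^k)^{1/(n-k)}$ makes the factors of $\mu$ cancel, leaving $b$ bounded by a constant depending only on $n$, $A^n$, and $\alpha\cdot A^k$ of the stated shape $\bigl(2/(A^n)^{1/n}\bigr)^{nk/(n-k)}(A^k\cdot\alpha)^{n/(n-k)}$.

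The main obstacle I anticipate is getting the constants and the ceiling functions to line up so that the powers of $\mu$ cancel exactly and the numerical constant comes out as the clean expression $2/(A^n)^{1/n}$ raised to $nk/(n-k)$; this requires care in choosing $d$ (and in whether one uses $A$ or $tA$ as the very ample divisor governing the Seshadri estimate) and in handling the rounding, exactly as in the bookkeeping in the proof of Theorem \ref{mobprecisebound}. A secondary point to check is that we are over an uncountable algebraically closed field so that general points genuinely achieve the Seshadri bound in Proposition \ref{seshadricalculationveryample}, and that the passage from "general $b$-tuple lies on $Z$" to "$Z$ has multiplicity $\geq\mu$ at each point" is legitimate — this is where the definition of $\mc(p;\mu)$ via $U_\mu$ is used. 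Finally, taking the supremum over all families $p$ and all admissible $\mu$ gives $\wmc(\alpha)\leq\sup\{A^n,(\cdots)\}$ as claimed.
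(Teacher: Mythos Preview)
There is a genuine gap in your argument. The claim that ``$D\supseteq Z$ set-theoretically and $Z$ has multiplicity $\geq\mu$ there, hence so does $D$'' is false. A divisor containing a cycle set-theoretically need not inherit its multiplicity: take $Z$ to be a plane nodal curve in $\mathbb{P}^{3}$ with its node at $p$ (so $\mult(Z,p)=2$) and $D$ the plane containing it; then $\mult(D,p)=1$. More drastically, a double line has multiplicity $2$ everywhere yet lies in a smooth plane. So the strict transform of $D$ is \emph{not} an effective divisor in the class $\phi^{*}(\lceil d\rceil A)-\mu E$, and your chain of inequalities breaks at this point. (There is a secondary slip as well: pseudo-effectivity of $\phi^{*}L-\mu E$ does not give $\varepsilon(\{x_{i}\},L)\geq\mu$, since the Seshadri constant is defined via nefness, not pseudo-effectivity.)

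The paper's proof avoids the detour through divisors entirely. It works directly with the strict transform $Z'$ of the $k$-cycle $Z$ on the blow-up $\phi\colon Y\to X$ and intersects $Z'$ against the nef class $(\phi^{*}A-\varepsilon(\{x_{i}\},A)\,E)^{k}$. Nonnegativity of this intersection yields
\[
0 \;\leq\; (\phi^{*}A-\varepsilon E)^{k}\cdot Z' \;=\; A^{k}\cdot\mu\alpha - b\mu\,\varepsilon^{k},
\]
so $b\leq (A^{k}\cdot\alpha)/\varepsilon^{k}$, with the $\mu$'s cancelling automatically and no rounding issues. Then Proposition~\ref{seshadricalculationveryample} supplies the lower bound $\varepsilon\geq\tfrac{1}{2}(A^{n})^{1/n}b^{-1/n}$ whenever $b>A^{n}$, and solving for $b$ gives the stated estimate. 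The moral is that the multiplicity information is captured by intersecting the cycle itself against the Seshadri class, not by first absorbing the cycle into an auxiliary hypersurface.
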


\begin{proof}
Fix $b$ general points $\{ x_{i} \}_{i=1}^{b}$ on $X$ and let $\phi: Y \to X$ be the blow-up with total exceptional divisor $E$.  Suppose that $Z$ is an effective cycle of class $\mu \alpha$ with multiplicity $\geq \mu$ at each of the $b$ points.  Since $\phi^{*}A - \varepsilon(\{ x_{i} \},A)E$ is nef, the strict transform $Z'$ of $Z$ satisfies
\begin{equation*}
0 \leq (\phi^{*}A - \varepsilon(\{ x_{i} \},A)E)^{k} \cdot Z' = A^{k} \cdot \mu \alpha - b \mu \varepsilon(\{x_{i} \},A)^{k}.
\end{equation*}
Proposition \ref{seshadricalculationveryample} shows that either $b \leq A^{n}$ (when $t=1)$ or $\varepsilon(\{ x_{i} \},A) \geq \frac{1}{2}(A^{n})^{1/n}b^{-1/n}$ (when $t>1$ so that $\frac{t-1}{t} \geq \frac{1}{2}$).  In the second case, for any family $p_{\mu}$ representing $\mu \alpha$, we obtain
\begin{equation*}
\mc(p_{\mu};\mu) \leq \left( \frac{2}{(A^{n})^{1/n}} \right)^{nk/n-k} (A^{k} \cdot \alpha)^{n/n-k}.
\end{equation*}
Since this expression is independent of $\mu$ we obtain the proof.
\end{proof}

\begin{lem} \label{wmcsubadditivity}
Let $X$ be an integral projective variety of dimension $n$ and let $\alpha, \beta \in \Eff_{k}(X)_{\mathbb{Q}}$.  Suppose that some multiple of $\alpha - \beta$ is represented by an effective cycle.  Then $\wmc(\alpha) \geq \wmc(\beta)$.
\end{lem}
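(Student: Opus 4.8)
The plan is to reduce everything to the elementary monotonicity and rescaling properties of $\mc(-\,;\mu)$ recorded in Lemma \ref{rescalingweightcount}, together with the finiteness from Lemma \ref{wmcgrowth}. Let $r$ be a positive integer such that $r(\alpha-\beta)=r\alpha-r\beta$ is represented by an effective cycle; in particular $r\alpha-r\beta\in N_k(X)_{\mathbb{Z}}$. Fix any positive integer $\mu$ with $\mu\beta\in N_k(X)_{\mathbb{Z}}$. First I would check that $r\mu\alpha$ is integral, by writing $r\mu\alpha=r(\mu\beta)+\mu(r\alpha-r\beta)$, which exhibits it as a sum of integral classes; hence $r\mu$ is a legitimate index in the supremum defining $\wmc(\alpha)$.

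Next, apply Lemma \ref{rescalingweightcount}(3) to the integral class $\mu\beta$ with scaling factor $r$ to obtain $\mc(\mu\beta;\mu)\leq \mc(r\mu\beta;r\mu)$. Then observe that $r\mu\alpha-r\mu\beta=\mu(r\alpha-r\beta)$ is represented by an effective cycle, namely $\mu$ times the given effective representative of $r\alpha-r\beta$; so Lemma \ref{rescalingweightcount}(1) (applied with ambient index $r\mu$) gives
\begin{equation*}
\mc(r\mu\beta;r\mu)\leq \mc\bigl(r\mu\beta+(r\mu\alpha-r\mu\beta);r\mu\bigr)=\mc(r\mu\alpha;r\mu).
\end{equation*}
Chaining the two inequalities yields $\mc(\mu\beta;\mu)\leq \mc(r\mu\alpha;r\mu)\leq \wmc(\alpha)$.

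Finally I would take the supremum over all admissible $\mu$ on the left-hand side. By Definition \ref{wmcdef} we have $\wmc(\beta)=\sup_{\mu}\mc(\mu\beta;\mu)$, a supremum which is finite by Lemma \ref{wmcgrowth}; the inequality above therefore gives $\wmc(\beta)\leq \wmc(\alpha)$, which is the claim.

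There is no serious obstacle here: the whole content sits in Lemma \ref{rescalingweightcount}, and the only point needing a moment's care is the bookkeeping — verifying that $r\mu\alpha$ is integral, and keeping the multiplicity index synchronised with the scaling factor $r$ by working with $r\mu$ rather than $\mu$ in the second step so that parts (1) and (3) of Lemma \ref{rescalingweightcount} can be composed. Everything else is a formal manipulation of suprema.
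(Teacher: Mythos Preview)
Your proof is correct and follows essentially the same approach as the paper: both arguments reduce to Lemma~\ref{rescalingweightcount}, parts (1) and (3). The paper's version is terser --- it picks a single sufficiently divisible $\mu$ that simultaneously realizes $\wmc(\alpha)$ and $\wmc(\beta)$ and makes $\mu(\alpha-\beta)$ effective, then applies Lemma~\ref{rescalingweightcount}(1) once --- whereas you bound each term $\mc(\mu\beta;\mu)$ in the supremum by $\wmc(\alpha)$ and then take the sup; your formulation has the minor advantage of not invoking that the supremum is actually attained.
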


\begin{proof}
Choose a sufficiently divisible integer $\mu$ so that $\mu(\alpha-\beta)$ is represented by an effective cycle, $\wmc(\alpha) = \mc(\mu \alpha;\mu)$, and $\wmc(\beta) = \mc(\mu \beta;\mu)$.  Then $\mc(\mu \alpha;\mu) \geq \mc(\mu \beta;\mu)$ by Lemma \ref{rescalingweightcount}.
\end{proof}

\subsection{Weighted mobility}

Lemma \ref{wmcgrowth} indicates that we should take the following asymptotic definition.

\begin{defn}
Let $X$ be an integral projective variety of dimension $n$ and let $\alpha \in  N_{k}(X)_{\mathbb{Q}}$.  The weighted mobility of $\alpha$ is:
\begin{equation*}
\wmob(\alpha) := \limsup_{m \to \infty} \frac{\wmc(m\alpha)}{m^{n/n-k}}.
\end{equation*}
\end{defn}

Note that there is no longer a factor of $n!$.  Lemma \ref{wmcgrowth} shows that the weighted mobility is always finite.
The weighted mobility satisfies the same properties as $\mob$ with the same proofs; we make brief verifications when necessary.

\begin{lem} \label{rescalingwmob}
Let $X$ be an integral projective variety and let $\alpha \in N_{k}(X)_{\mathbb{Q}}$.  Fix a positive integer $a$.  Then $\wmob(a\alpha) = a^{\frac{n}{n-k}}\wmob(\alpha)$.
\end{lem}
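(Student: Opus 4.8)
The statement to prove is Lemma \ref{rescalingwmob}: that $\wmob(a\alpha) = a^{\frac{n}{n-k}}\wmob(\alpha)$ for a positive integer $a$.

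The plan is to reduce this to the formal homogeneity lemma \ref{lazlemma} applied to the function $f(m) = \wmc(m\alpha)$. First I would record that this $f: \mathbb{N} \to \mathbb{R}_{\geq 0}$ is well-defined (taking $f(m) = 0$ when $m\alpha \notin N_k(X)_{\mathbb{Z}}$, or restricting attention to the sublattice of $m$ with $m\alpha$ integral), and that by Lemma \ref{wmcgrowth} it is finite for every $m$. Then I would verify the single hypothesis of Lemma \ref{lazlemma}: if $f(r) > 0$ then $f(r+s) \geq f(s)$ for all $s \in \mathbb{N}$. This is precisely the content of Lemma \ref{wmcsubadditivity} (or more directly Lemma \ref{rescalingweightcount}(1)): if $\wmc(r\alpha) > 0$ then some multiple of $r\alpha$ is represented by an effective cycle, hence $r\alpha$ itself is pseudo-effective and $(r+s)\alpha - s\alpha = r\alpha$ is represented by an effective cycle after passing to a multiple, so $\wmc((r+s)\alpha) \geq \wmc(s\alpha)$ by Lemma \ref{wmcsubadditivity}. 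Applying Lemma \ref{lazlemma} with exponent $k = \frac{n}{n-k}$ (using the remark that the exponent need not be an integer) gives $g(cr) = c^{\frac{n}{n-k}} g(r)$ where $g(r) = \limsup_{m\to\infty} f(mr)/m^{\frac{n}{n-k}}$.

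The last step is to identify $g(r)$ with $\wmob(r\alpha)$. By definition $\wmob(r\alpha) = \limsup_{m\to\infty} \wmc(mr\alpha)/m^{\frac{n}{n-k}} = g(r)$ directly. Setting $c = a$ and $r = 1$ then yields $\wmob(a\alpha) = g(a) = a^{\frac{n}{n-k}} g(1) = a^{\frac{n}{n-k}} \wmob(\alpha)$, as desired. I would also note that, exactly as in the unweighted case following Lemma \ref{rescalingmob}, this homogeneity lets one extend $\wmob$ from $N_k(X)_{\mathbb{Z}}$ to $N_k(X)_{\mathbb{Q}}$ consistently.

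There is no real obstacle here: the only substantive input is the subadditivity Lemma \ref{wmcsubadditivity}, which is already proved, together with the finiteness Lemma \ref{wmcgrowth} (needed so that $f$ genuinely takes values in $\mathbb{R}_{\geq 0}$ rather than $\mathbb{R}_{\geq 0} \cup \{\infty\}$, though Lemma \ref{lazlemma} tolerates the value $\infty$ anyway). The mild bookkeeping point to be careful about is the lattice issue — $m\alpha$ need only be integral for $m$ in a sublattice of $\mathbb{Z}$ — but this is handled exactly as in the proof of Lemma \ref{rescalingmob}, by clearing denominators at the outset or by observing the $\limsup$ is unaffected.
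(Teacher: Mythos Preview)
Your proposal is correct and follows essentially the same approach as the paper: the paper states that the weighted mobility ``satisfies the same properties as $\mob$ with the same proofs,'' and the proof of Lemma \ref{rescalingmob} is exactly the reduction to Lemma \ref{lazlemma} via the monotonicity $f(r+s) \geq f(s)$ when $f(r)>0$. One small remark: since $\wmc$ (Definition \ref{wmcdef}) is already defined for rational classes via the supremum over $\mu$ with $\mu\alpha$ integral, the lattice bookkeeping you flag is actually unnecessary here --- $f(m)=\wmc(m\alpha)$ is defined for every positive integer $m$ without restriction.
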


%\begin{proof}
%It suffices to consider the case when some multiple of $\alpha$ is effective.  Then Lemma \ref{wmcsubadditivity} shows that we can apply Lemma \ref{lazlemma}.
%\end{proof}

\begin{lem} \label{wmobadditive}
Let $X$ be an integral projective variety.  Suppose that $\alpha, \beta \in N_{k}(X)_{\mathbb{Q}}$ are classes such that some positive multiple of each is represented by an effective cycle.  Then $\wmob(\alpha + \beta) \geq \wmob(\alpha) + \wmob(\beta)$.
\end{lem}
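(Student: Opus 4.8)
The plan is to transcribe the proof of Lemma~\ref{mobadditive}, substituting the additivity of weighted mobility counts from Lemma~\ref{rescalingweightcount} for the family-sum additivity of ordinary counts. As in Lemma~\ref{mobadditive}, the asserted inequality is unaffected by rescaling $\alpha$ and $\beta$ by a common positive integer, since $\wmob$ is homogeneous of degree $n/(n-k)$ (Lemma~\ref{rescalingwmob}). So I would first reduce to the case in which every positive integer multiple of $\alpha$ and of $\beta$ lies in $N_{k}(X)_{\mathbb{Z}}$ and is represented by an effective cycle; then $m\alpha$, $m\beta$, $m(\alpha+\beta)$ and all their integer multiples are effective for every $m$.

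The core step is the pointwise bound $\wmc(m(\alpha+\beta)) \geq \wmc(m\alpha) + \wmc(m\beta)$ for each fixed positive integer $m$. Since $\wmc$ is finite (Lemma~\ref{wmcgrowth}), the suprema defining $\wmc(m\alpha)$ and $\wmc(m\beta)$ are attained, say $\wmc(m\alpha) = \mc(\mu_{1}m\alpha;\mu_{1})$ and $\wmc(m\beta) = \mc(\mu_{2}m\beta;\mu_{2})$. Put $\mu = \mu_{1}\mu_{2}$. Two applications of the scaling inequality Lemma~\ref{rescalingweightcount}(3) (scaling the class $\mu_{1}m\alpha$ by $\mu_{2}$, and the class $\mu_{2}m\beta$ by $\mu_{1}$) upgrade these to $\wmc(m\alpha) \leq \mc(\mu m\alpha;\mu)$ and $\wmc(m\beta) \leq \mc(\mu m\beta;\mu)$, so a single multiplicity $\mu$ witnesses both weighted counts. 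As $\mu m\alpha$ and $\mu m\beta$ are represented by effective cycles, Lemma~\ref{rescalingweightcount}(2) applied to these two classes gives
\[
\wmc(m(\alpha+\beta)) \geq \mc(\mu m(\alpha+\beta);\mu) \geq \mc(\mu m\alpha;\mu) + \mc(\mu m\beta;\mu) \geq \wmc(m\alpha) + \wmc(m\beta),
\]
the first inequality being Definition~\ref{wmcdef} with $\mu$ taken as the multiplicity, the second being Lemma~\ref{rescalingweightcount}(2) together with $\mu m(\alpha+\beta)=\mu m\alpha+\mu m\beta$, and the third the previous two estimates.

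Finally, dividing by $m^{n/(n-k)}$ and passing to the $\limsup$ over $m$ yields $\wmob(\alpha+\beta)\geq\wmob(\alpha)+\wmob(\beta)$, exactly as in the closing line of Lemma~\ref{mobadditive}. The only step needing genuine care is reaching the common multiplicity $\mu$: one cannot simply add a $\mu_{1}$-weighted family to a $\mu_{2}$-weighted family, which is why both parts (2) and (3) of Lemma~\ref{rescalingweightcount} are invoked. Everything else is formal, and the same template reproves Lemma~\ref{mobadditive} and its $\ratmob$ analogue.
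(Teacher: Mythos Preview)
Your proof is correct and follows exactly the approach the paper intends: the paper gives no separate argument for this lemma, saying only that ``the weighted mobility satisfies the same properties as $\mob$ with the same proofs,'' and your write-up carries out precisely that adaptation of the proof of Lemma~\ref{mobadditive}. The one genuine wrinkle beyond the unweighted case---that the optimal multiplicities $\mu_{1}$, $\mu_{2}$ for $\wmc(m\alpha)$ and $\wmc(m\beta)$ may differ, so one must pass to a common $\mu=\mu_{1}\mu_{2}$ via Lemma~\ref{rescalingweightcount}(3) before applying the additivity in Lemma~\ref{rescalingweightcount}(2)---is exactly the ``brief verification'' the paper leaves implicit, and you have handled it correctly.
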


%\begin{proof}
%It suffices to show that $\wmc(r\alpha + r\beta) \geq \wmc(r\alpha) + \wmc(r\beta)$ for any positive integer $r$.  Choose $\mu_{1},\mu_{2}$ to be multiplicities computing these two weighted multiplicity counts; then
%\begin{equation*}
%\wmc(r\alpha+r\beta) \geq \mc(\mu_{1}\mu_{2}r(\alpha+\beta);\mu_{1}\mu_{2}) \geq \wmc(r\alpha) + \wmc(r\beta)
%\end{equation*}
%by Lemma \ref{rescalingweightcount}.
%\end{proof}

It is not hard to show that a complete intersection of ample divisors always has positive $\wmob$; a precise computation is done in Example \ref{wmobci}.  By Lemma \ref{wmobadditive} we obtain:

\begin{cor}
Let $X$ be an integral projective variety and let $\alpha \in N_{k}(X)_{\mathbb{Q}}$ be a big class.  Then $\wmob(\alpha) > 0$.
\end{cor}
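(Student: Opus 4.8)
We want to show that $\wmob(\alpha) > 0$ for any big class $\alpha \in N_k(X)_{\mathbb{Q}}$. The strategy is identical in spirit to the proof of Corollary \ref{bigposcor} for ordinary mobility: reduce to the case of a complete intersection of ample divisors, for which positivity of the weighted mobility can be checked by hand, and then bootstrap to all big classes using the additivity property of Lemma \ref{wmobadditive}.

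\emph{Step 1: Handle a complete intersection class.} Let $H$ be a very ample divisor on $X$ and consider the class $\gamma = H^{n-k}$. The first task is to show $\wmob(\gamma) > 0$; this is precisely the content of Example \ref{wmobci}, which shows $\wmob(H^{n-k}) = \vol(H) > 0$ (over an uncountable field). For completeness one can also argue directly: take $b$ general points, blow up to get $\phi: Y \to X$ with exceptional divisor $E$, and use that $\phi^*(mH) - \mu E$ is effective whenever $\mu \lesssim \varepsilon_b(mH) \cdot (\text{something})$; concretely, for $b \approx \frac{1}{t^n} (mH)^n$ general points one gets multiplicity-$\mu$ members of $|\mu m H|$ through all $b$ points by intersecting $n-k$ general such divisors, and taking $m \to \infty$ produces the bound $\wmob(\gamma) \geq c\, H^n$ for a positive constant $c$. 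Either way, we obtain a big class with strictly positive weighted mobility.

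\emph{Step 2: Bootstrap to arbitrary big classes.} Given an arbitrary big $\alpha \in N_k(X)_{\mathbb{Q}}$, choose a very ample $H$ small enough that $\beta := \alpha - H^{n-k}$ is still pseudo-effective (possible since $\alpha$ lies in the interior of $\Eff_k(X)$ and $H^{n-k}$ can be scaled down; more carefully, replace $H$ by $\frac{1}{a}H$ for large $a$, using that $\frac{1}{a^{n-k}}H^{n-k} \to 0$). After rescaling $\alpha$ by a positive integer so that all relevant classes are integral, both $H^{n-k}$ and $\beta = \alpha - H^{n-k}$ have positive multiples represented by effective cycles — the former obviously, the latter because $\beta$ is pseudo-effective so $\alpha$ big forces some multiple of $\beta$ effective, or one simply perturbs so that $\beta$ itself is big hence effective after scaling. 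Then Lemma \ref{wmobadditive} gives
\begin{equation*}
\wmob(\alpha) = \wmob(H^{n-k} + \beta) \geq \wmob(H^{n-k}) + \wmob(\beta) \geq \wmob(H^{n-k}) > 0.
\end{equation*}
(Alternatively, Lemma \ref{wmcsubadditivity} gives $\wmob(\alpha) \geq \wmob(H^{n-k}) > 0$ directly once some multiple of $\alpha - H^{n-k}$ is effective, which is cleaner and avoids needing both classes effective.)

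\emph{Main obstacle.} The conceptual content all sits in Step 1, i.e.\ in verifying that a complete intersection of ample divisors has positive weighted mobility; this is where the interplay between Seshadri constants at general points (Proposition \ref{seshadricalculationveryample}) and the multiplicity-counting definition of $\wmc$ genuinely enters, and it is exactly the computation carried out in Example \ref{wmobci}. Step 2 is then a routine application of $\wmob$'s superadditivity, entirely parallel to how Corollary \ref{bigposcor} follows from Lemma \ref{mobadditive}; the only mild care needed is in choosing the very ample $H$ so that the difference class remains (eventually) effective, which is automatic in the interior of the pseudo-effective cone. One should also note that Example \ref{wmobci} — and hence this corollary as literally stated via that route — presumes the base field is uncountable, matching the hypothesis used throughout Section \ref{weightedmobilitysection}; the direct Seshadri argument sketched in Step 1 carries the same restriction since it invokes Proposition \ref{seshadricalculationveryample}.
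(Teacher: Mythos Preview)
Your proposal is correct and matches the paper's approach exactly: the paper also reduces to showing $\wmob(H^{n-k})>0$ for a complete intersection of ample divisors (citing Example \ref{wmobci}) and then invokes Lemma \ref{wmobadditive} to bootstrap to all big classes. Your observation about the uncountability hypothesis in Example \ref{wmobci} is apt; note however that one can sidestep it entirely by observing $\wmc(\alpha) \geq \mc(\alpha;1) = \mc(\alpha)$, whence $\wmob(\alpha) \geq \tfrac{1}{n!}\mob(\alpha) > 0$ for big $\alpha$ by Corollary \ref{bigposcor}.
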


Then Lemma \ref{easyconelem} shows:

\begin{thrm}
Let $X$ be an integral projective variety.  The function $\wmob: N_{k}(X)_{\mathbb{Q}} \to \mathbb{R}_{\geq 0}$ is locally uniformly continuous on the interior of $\Eff_{k}(X)_{\mathbb{Q}}$.
\end{thrm}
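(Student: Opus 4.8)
The plan is to deduce the statement from Lemma \ref{easyconelem}, exactly as Theorem \ref{mobcontbigcone} was deduced for the ordinary mobility. I would invoke that lemma with $V = N_{k}(X)_{\mathbb{Q}}$, with $C = \Eff_{k}(X)_{\mathbb{Q}}$ — a full-dimensional salient closed convex cone by \cite[Theorem 0.2]{fl13} — with $f = \wmob$, and with exponent $c = \frac{n}{n-k}$. It then remains only to verify the three hypotheses (1)--(3) of that lemma, and these are supplied by results already in hand.

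Hypothesis (2), the homogeneity $\wmob(me) = m^{n/n-k}\wmob(e)$ for $m \in \mathbb{Q}_{>0}$ and $e \in C$, is Lemma \ref{rescalingwmob} for positive integers $m$; an arbitrary positive rational scaling then follows by writing $m = a/b$ and applying the integral case to clear the denominator. Hypothesis (1), positivity of $\wmob$ on $C^{int}$, is precisely the corollary immediately preceding the statement (a big class has positive weighted mobility), which in turn follows from the complete-intersection computation of Example \ref{wmobci} together with the superadditivity Lemma \ref{wmobadditive}. For hypothesis (3), let $v, e \in C^{int}$; both are big, so a positive multiple of each is represented by an effective cycle, and Lemma \ref{wmobadditive} gives $\wmob(v+e) \geq \wmob(v) + \wmob(e)$. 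Since $\wmob$ is nonnegative — it is a $\limsup$ of the nonnegative quantities $\wmc(m\alpha)/m^{n/n-k}$ — the right-hand side is at least $\wmob(v)$, which is (3).

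With (1)--(3) checked, Lemma \ref{easyconelem} yields that $\wmob$ is locally uniformly continuous on $C^{int}$, that is, on the interior of $\Eff_{k}(X)_{\mathbb{Q}}$, which is the assertion. I expect no genuine obstacle: all the substance has been pushed into the preceding lemmas, and the present step is the purely formal bookkeeping of matching them to the hypotheses of Lemma \ref{easyconelem}. The one point to keep in mind is that Lemma \ref{easyconelem} is stated for a function on the whole $\mathbb{Q}$-vector space, so one should note that $\wmob$ is defined and nonnegative on all of $N_{k}(X)_{\mathbb{Q}}$, vanishing outside $\Eff_{k}(X)_{\mathbb{Q}}$; since (1)--(3) only constrain values on $C$ and $C^{int}$, this causes no difficulty.
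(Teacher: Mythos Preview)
Your proposal is correct and follows exactly the same route as the paper: the paper's proof is the single sentence ``Then Lemma \ref{easyconelem} shows:'' after having recorded the homogeneity (Lemma \ref{rescalingwmob}), the superadditivity (Lemma \ref{wmobadditive}), and the positivity on big classes (the preceding corollary). Your write-up simply makes the verification of hypotheses (1)--(3) explicit, which is fine.
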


Note that the multiplicity of a cycle at a general point can only increase upon taking a birational pushforward.  Since mobility counts also can only increase upon pushforward, we have

\begin{prop} \label{wmobincreasepushforward}
Let $\pi: X \to Y$ be a surjective birational morphism of integral projective varieties.  For any $\alpha \in N_{k}(X)_{\mathbb{Q}}$ we have $\wmob(\pi_{*}\alpha) \geq \wmob(\alpha)$.
\end{prop}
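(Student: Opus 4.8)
The plan is to reduce everything to the elementary fact that a birational morphism is an isomorphism over a dense open set, so that multiplicities of cycles at general points are unchanged under pushforward. Note first that $\dim Y = \dim X =: n$ because $\pi$ is birational, so the normalizations $m^{n/(n-k)}$ appearing in $\wmob(\alpha)$ and $\wmob(\pi_*\alpha)$ agree; it therefore suffices to prove $\wmc(m\pi_*\alpha) \geq \wmc(m\alpha)$ for every positive integer $m$ and then pass to the $\limsup$.

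Fix such an $m$ and a positive integer $\mu$ with $\mu m\alpha \in N_k(X)_{\mathbb{Z}}$. Since $\mc(\mu m\alpha;\mu)$ is a finite non-negative integer (it is bounded by $\wmc(m\alpha) < \infty$ via Lemma \ref{wmcgrowth}), I would choose a family $p:U\to W$ of effective $k$-cycles representing $\mu m\alpha$ with $\mc(p;\mu) = \mc(\mu m\alpha;\mu)$, and then form the proper pushforward family $\pi_*p$ on $Y$ as in Section \ref{cyclefamilysection}. This family is defined over an open subset of $W$, and its members are the effective cycles $\pi_*U_{[w]}$, which have class $\pi_*(\mu m\alpha) = \mu m\pi_*\alpha$.

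Next I would compare the multiplicity-$\geq\mu$ loci. Let $V\subseteq Y$ be the dense open locus over which $\pi$ is an isomorphism and $V' = \pi^{-1}(V)\subseteq X$. For any effective $k$-cycle $Z$ on $X$ and any $x\in V'$, the cycles $Z$ and $\pi_*Z$ restrict to the same cycle on $V'\cong V$ (components of $Z$ inside $\pi$'s exceptional locus contribute $0$ to $\pi_*Z$ but also meet $V$ in nothing), so $\mult(\pi_*Z,\pi(x)) = \mult(Z,x)$. Applying this to the cycle-theoretic fibers of $p$: if $U_\mu\subseteq U$ is the locus of multiplicity $\geq\mu$ and $U_\mu^{\times_W b}\to X^{\times b}$ is dominant, then a general tuple $(y_1,\dots,y_b)\in Y^{\times b}$ lies in $V^{\times b}$, its preimage $(x_1,\dots,x_b)$ is a general tuple in $X^{\times b}$, so some member $U_{[w]}$ has multiplicity $\geq\mu$ at every $x_j$, whence $\pi_*U_{[w]}$ has multiplicity $\geq\mu$ at every $y_j$. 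Passing, via Proposition \ref{opensubsetsforcycles}, to the open subset of the base over which $\pi_*p$ is defined, this gives $\mc(\pi_*p;\mu)\geq\mc(p;\mu)$, hence $\mc(\mu m\pi_*\alpha;\mu)\geq\mc(\mu m\alpha;\mu)$. Taking the supremum over all admissible $\mu$ yields $\wmc(m\pi_*\alpha)\geq\wmc(m\alpha)$, and letting $m\to\infty$ finishes the argument.

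The only points requiring care are bookkeeping points: (i) the pushforward family is a priori defined only over an open subset of $W$, which is absorbed by Proposition \ref{opensubsetsforcycles}; and (ii) cycle-theoretic fibers of $p$ may have components mapping into $\pi$'s exceptional locus, which is harmless precisely because the weighted mobility count is tested against general points of $Y$, and these avoid the exceptional locus. Neither is a real obstacle; the whole argument is the weighted analogue of the proof of Proposition \ref{mobpushforwardprop}.
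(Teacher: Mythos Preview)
Your argument is correct and follows the same approach as the paper. The paper's proof is the one-line observation preceding the statement: multiplicities at general points can only increase under birational pushforward, and mobility counts do as well (the analogue of Proposition \ref{mobpushforwardprop}); you have simply unpacked both halves of that sentence carefully, including the bookkeeping about restricting to the open base where the pushforward family is defined.
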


\subsection{Continuity of weighted mobility}

We analyze the continuity of the weighted mobility function using similar arguments as for the mobility.  Again, the base case is in codimension $1$.

\begin{lem} \label{weighteddivisorrestriction}
Let $X$ be an integral projective variety of dimension $n$.  Let $\alpha \in N_{n-1}(X)_{\mathbb{Q}}$ and suppose that $A$ is a very ample divisor and $s$ is a positive integer such that $\alpha \cdot A^{n-1} < sA^{n}$.  Fix a positive integer $\mu$ such that $\mu \alpha \in N_{n-1}(X)_{\mathbb{Z}}$.  Fix a general element $H \in |sA|$.  For $0 \leq i \leq \mu-1$ there are positive integers $k_{i}$ and collections of families of effective $(n-2)$-cycles $\{ r_{i,j} \}_{j=1}^{k_{i}}$ on $H$  such that $[r_{i,j}] \cdot A|_{H}^{n-2} < (\mu-i) sA|_{H}^{n-1}$ and
\begin{equation*}
\mc(\mu \alpha; \mu) \leq \sup_{i,j} \mc_{H}(r_{i,j};\mu-i).
\end{equation*}
\end{lem}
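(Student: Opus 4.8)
The plan is to adapt the divisor-restriction argument of Proposition \ref{divisorrestriction} to the weighted setting, keeping careful track of how the multiplicity at general points behaves when a cycle is cut by a general hypersurface $H \in |sA|$. The basic mechanism is this: if $Z$ is an effective $(n-1)$-cycle of class $\mu\alpha$ passing through $b$ general points $x_1,\dots,x_b$ with multiplicity $\geq \mu$ at each, then for general $H$ the points still lie on $H$, no component of $Z$ is contained in $H$, and the intersection cycle $Z\cdot H$ is an effective $(n-2)$-cycle on $H$ of class $\mu\alpha\cdot H$ passing through each $x_i$. The key point is to control the multiplicity of $Z\cdot H$ at $x_i$: since $H$ is a general member of a very ample linear series, by Bertini-type reasoning $H$ is smooth at each $x_i$ and meets $Z$ there with the ``expected'' drop in multiplicity, so $\mult_{x_i}(Z\cdot H) \geq \mult_{x_i}(Z) \geq \mu$ (intersecting with a general Cartier divisor through a point of multiplicity $m$ on a variety yields multiplicity $\geq m$ on the slice, and in fact the generic hyperplane section of a point of multiplicity $m$ again has multiplicity $m$). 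So the naive bound $\mc(\mu\alpha;\mu) \leq \mc_H(\mu\alpha\cdot H;\mu)$ holds, but the class $\mu\alpha\cdot H$ has intersection $(\mu\alpha\cdot H)\cdot A|_H^{n-2} = s(\mu\alpha\cdot A^{n-1}) < \mu s^2 A^n$, i.e.\ roughly a factor of $s\mu$ too large, not $(\mu-i)s$.

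To get the sharper bound, I would not cut all at once but peel off one unit of multiplicity at a time, in the spirit of how one handles multiplicity-$\mu$ subschemes via successive blow-ups or via the filtration by order of vanishing. Concretely: given the family $p:U\to W$ representing $\mu\alpha$, restrict to the subfamily $U_\mu$ of cycle-fibers with multiplicity $\geq\mu$ at the marked point, base-change to a general $H\in|sA|$, and use Krull's principal ideal theorem exactly as in the proof of Theorem \ref{mobprecisebound}(1) to see that every component of a fiber of the restricted family on $H$ has dimension $n-1$ or $n-2$. The $(n-2)$-dimensional part, organized into families via Construction \ref{equidimsubschemeconstr}, gives families $r_{i,j}$ of $(n-2)$-cycles on $H$. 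The indexing by $0\leq i\leq\mu-1$ will come from tracking how much of the total multiplicity ``$\mu$'' is absorbed by the components of $Z$ already contained in the strict-transform-style decomposition versus how much survives on $H$: when one passes to $H$, a cycle with a component appearing with high multiplicity distributes its multiplicity, and one obtains on $H$ a cycle of some class $\beta_{i,j}$ with multiplicity $\geq\mu-i$ at the point, where the ``$i$'' measures the multiplicity lost to vertical components. One must then check the intersection bound $[r_{i,j}]\cdot A|_H^{n-2} \leq \beta\cdot A^{n-1} < (\mu-i)sA^{n-1}$, which should follow because the pushforward of $\beta_{i,j}$ to $X$ is dominated (in the $\preceq$ order) by $(\mu-i)\alpha$ — the residual class after removing the part of $\mu\alpha$ used up — together with $(\mu\alpha)\cdot A^{n-1} < \mu sA^n$ and the divisibility of $H$'s class by $s$.

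The main obstacle will be making the bookkeeping in the previous paragraph precise: proving that when a general hypersurface section is taken, the total multiplicity $\mu$ at the point is correctly partitioned among the vertical components and the genuinely $(n-2)$-dimensional residual cycle, so that one really gets classes satisfying $[r_{i,j}]\cdot A|_H^{n-2} < (\mu-i)sA|_H^{n-1}$ with matching multiplicity lower bound $\mu-i$. This requires a careful local analysis: at the marked point $x$, the local equation of $H$ cuts the local ring of each component $V_\ell$ of $Z$, and one needs that $\sum_\ell a_\ell \mult_x(V_\ell \cap H) \geq \mu - (\text{multiplicity coming from components swallowed by }H)$, which is where semicontinuity of multiplicity in families (the lemma just before Definition of $\mc(p;\mu)$) and genericity of $H$ both enter. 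Once this local multiplicity accounting is established, the inequality $\mc(\mu\alpha;\mu)\leq\sup_{i,j}\mc_H(r_{i,j};\mu-i)$ follows from Lemma \ref{fibercontainedmc} and Proposition \ref{opensubsetsforcycles} exactly as in the unweighted base cases, since every fiber of the multiplicity-restricted family on $H$ is set-theoretically contained in a fiber of one of the $r_{i,j}$ carrying the appropriate multiplicity.
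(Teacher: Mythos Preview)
Your overall shape is close to the paper's argument, but two concrete points are off and would block the proof from going through.

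\textbf{What the index $i$ actually is.} You describe $i$ vaguely as ``the multiplicity lost to vertical components'' and propose a Krull-type splitting into $(n-2)$- and $(n-1)$-dimensional pieces after restricting to $H$. For divisors this collapses to a single concrete situation: the only way a fiber of $p$ can fail to meet $H$ properly is if $H$ itself is a component of that divisor. The paper makes this explicit: stratify the parameter space $W$ into locally closed pieces $W_{i}$ on which $H$ occurs with multiplicity exactly $i$ in the cycle-theoretic fiber. The hypothesis $\alpha\cdot A^{n-1} < sA^{n}$ forces $\mu\alpha - \mu[H]$ to be non-pseudo-effective, so $W_{i}=\emptyset$ for $i\geq\mu$. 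Over each component $W_{i,j}$ one removes the constant summand $iH$ to obtain a family $p_{i,j}$ of divisors none of whose members contain $H$, and then $r_{i,j}:=p_{i,j}\cdot H$ is a genuine family of $(n-2)$-cycles on $H$. Without naming this stratification you cannot justify that the $r_{i,j}$ are finitely many families, nor compute their classes.

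\textbf{The class bound is not what you wrote.} You claim the pushforward of $\beta_{i,j}$ is $\preceq(\mu-i)\alpha$, i.e.\ that removing the ``used-up'' part leaves something dominated by $(\mu-i)\alpha$. That is false in general: the residual class on $X$ is $[p_{i,j}]=\mu\alpha - is[A]$, and there is no reason for $\mu\alpha - is[A]\preceq(\mu-i)\alpha$, since $\alpha$ and $[A]$ need not be comparable. The correct computation is purely numerical against $A$:
\[
[r_{i,j}]\cdot A|_{H}^{n-2}=[p_{i,j}]\cdot sA^{n-1}=s\mu(\alpha\cdot A^{n-1})-is^{2}A^{n}<(\mu-i)s^{2}A^{n}=(\mu-i)sA|_{H}^{n-1},
\]
using the hypothesis $\alpha\cdot A^{n-1}<sA^{n}$ directly. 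This is the step that makes the factor $(\mu-i)$ appear, and it does not come from an inequality of classes in $N_{n-1}(X)$.

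Finally, the multiplicity accounting is simpler than your sketch suggests: at a point $x$ which is smooth on $H$, the summand $iH$ contributes exactly $i$ to $\mult_{x}$ of the fiber, so the residual divisor has multiplicity $\geq\mu-i$ there; then intersecting with $H$ can only increase multiplicity. This is why one discards the components of $U_{\mu}$ lying over $\Sing(H)$ before comparing via Lemma~\ref{fibercontainedmc}.
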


The $r_{i,j}$ are constructed by taking a family representing $\mu \alpha$, considering the subfamilies in which $H$ occurs with multiplicity $i$, and then for each removing $H$ (with the appropriate multiplicity) and restricting to $H$.

\begin{proof}
For a general $H \in |sA|$ we have that $H$ is integral.  Let $p: U \to W$ be a family of effective $(n-1)$-cycles representing $\mu \alpha$ realizing the weighted mobility count.  By Lemma \ref{goodfamilymodification} we may suppose that $U \to X$ is projective and $W$ is normal projective.  Set $p_{\mu}: U_{\mu} \to W$ to be the closed subset of points whose multiplicity in the corresponding cycle-theoretic fiber is at least $\mu$.  Let $\widehat{U}_{\mu}$ denote the union of the components of $U_{\mu}$ which are not contained in the singular locus of $H$ and denote by $\widehat{p}_{\mu}$ the restriction of $p_{\mu}$; note that removing such components will not affect mobility counts.  Set $b = \mc(p;\mu)$ so that
\begin{equation*}
\widehat{U}_{\mu}^{ \times_{W} b} \to X^{\times b}
\end{equation*}
is surjective.  %Since surjectivity is preserved by base change, we see that the base change of $s^{\times b}$ to $H$ is still surjective, yielding a closed subset $U_{\mu,H} \subset W \times H$ with mobility count $b$.

Stratify $W$ into locally closed subsets $W_{i}$ such that $H$ has multiplicity exactly $i$ in every fiber of $p$ over a closed point of $W_{i}$.  Note that since $\mu \alpha - \mu H$ is not pseudo-effective, $W_{i}$ is empty for $i \geq \mu$.  Suppose that $W_{i}$ has $k_{i}$ irreducible components enumerated as $W_{i,j}$ and consider the restriction of the family $p$ to $W_{i,j}$.  This restricted family has one component corresponding to the constant divisor $iH$; after removing this divisor, we obtain a family $p_{i,j}$ of divisors on $X$ not containing $H$ in their support.  Note that $[p_{i,j}] + is[A] = \mu \alpha$.

Replace $p_{i,j}$ by a projective normalized closure as in Lemma \ref{goodfamilymodification}.  For each family, let $p_{i,j,\mu-i}$ denote the closed locus of points whose multiplicities in the fibers is at least $\mu-i$.  We claim that every fiber of $\widehat{p}_{\mu}$ over $W$ is set theoretically contained in a fiber of some $p_{i,j,\mu-i}$.  Indeed, since $\mu \alpha - \mu H$ is not pseudo-effective, we see that any point of multiplicity $\mu$ in a fiber of our original family $p$ which is not contained in a singular point of $H$ must have a contribution from components aside from $H$ of multiplicity at least $\mu-i$.  Thus, arguing as in Lemma \ref{fibercontainedmc} we obtain a finite collection of closed subvarieties $U_{i,j,\mu - i}$ of $W_{i,j} \times X$ such that
\begin{equation*}
\bigcup_{i,j} U_{i,j,\mu-i}^{ \times_{\cup W_{i,j}} b} \to X^{\times b}
\end{equation*}
is surjective.  (In other words, away from the singular locus of $H$ any point which has fiberwise multiplicity $\geq \mu$ in our original family must coincide with a point which has multiplicity $\geq \mu-i$ in one of our new families.)  The base change of a surjective map is again surjective; in this way we obtain closed subsets $U^{H}_{i,j,\mu-i}$ such that the $b$th relative product over the base of the family maps surjectively onto $H^{\times b}$.

Recall that the support of divisors in the family $p_{i,j}$ never contains $H$.  By intersecting the family $p_{i,j}$ with the divisor $H$ we obtain a family $r_{i,j}: Q_{i,j} \to T_{i,j}$ of $(n-2)$-cycles on $H$ satisfying
\begin{align*}
[r_{i,j}] \cdot A|_{H}^{n-2} & = [p_{i,j}] \cdot sA^{n-1} = s \mu \alpha \cdot A^{n-1} - i s^{2} A^{n} \\
& < (\mu - i) s^{2}A^{n} = (\mu-i) sA|_{H}^{n-1}.
\end{align*}
%where the last inequality follows from the fact that $\frac{[r_{i,j}]}{\mu - i} \cdot A|_{H}^{n-1} < sA|_{H}^{n-1}$.
Replace $r_{i,j}$ by a projective normalized closure as in Lemma \ref{goodfamilymodification}, and note that every intersection of a member of $p_{i,j}$ with $H$ is contained in the fiber of some $r_{i,j}$.  Since the multiplicity of a cycle-theoretic fiber along a point in $H$ can only increase upon intersection with $H$, we see that $Q_{i,j,\mu-i}$ set-theoretically contains the base change $U^{H}_{i,j,\mu-i}$.  Again applying Lemma \ref{fibercontainedmc} we obtain the desired statement.
\end{proof}

%\begin{prop} \label{weighteddivisorrestriction}
%Let $X$ be an integral projective variety of dimension $n$.  Let $\alpha \in N_{n-1}(X)_{\mathbb{Z}}$ and suppose that $A$ is a very ample divisor such that $\alpha - [A]$ is not pseudo-effective.  Fix a positive integer $\mu$.  Then for a general element $H \in |A|$
%\begin{equation*}
%\mc(\mu \alpha;\mu) < \mc_{H}(\alpha \cdot H; \mu)
%\end{equation*}
%\end{prop}

%\begin{proof}
%Let $p: U \to W$ be a family of effective $(n-1)$-cycles representing $\alpha$.  Since $\alpha - [A]$ is not pseudo-effective, no divisor in the family $p$ can contain $H$ in its support.  Thus we have a well-defined intersection family $p \cdot H$ on $H$ in the sense of Construction \ref{intersectionfamilyconstr}.  Over an open subset $W^{0}$ of the parameter space, this family coincides with the base-change of the map $s: U \to X$ to $H$.  Furthermore, recall that for any divisor $D$ in the family $p$, the multiplicity of $D \cdot H$ along a point is never less than the multiplicity of $D$ at that point.

%Set $b = \mc(p)$.  By Proposition \ref{opensubsetsforcycles}.(2), the mobility count does not change upon restriction to the open subfamily $p^{0}: U^{0} \to W^{0}$ so that
%\begin{equation*}
%U^{0 \times_{W^{0}} b} \to X^{\times b}
%\end{equation*}
%is dominant.  Since dominance is preserved by base change, we see that $\mc_{H}(p \cdot H) \geq b$.  The result follows by varying $p$ over all families representing $\alpha$.
%\end{proof}

\begin{prop} \label{mcbasicvarestimate}
Let $X$ be an integral projective variety of dimension $n$ and let $\alpha \in N_{k}(X)_{\mathbb{Q}}$.  
\begin{enumerate}
\item Suppose that $A$ is a very ample Cartier divisor on $X$ and $s$ is a positive integer such that $\alpha \cdot A^{n-1} < sA^{n}$.  Then
\begin{equation*}
\wmc(\alpha) < s^{n} A^{n}.
\end{equation*}
\item Suppose $n \geq 2$.  Let $A$ and $H$ be very ample divisors and let $s$ be a positive integer such that $\alpha - [H]$ is not pseudo-effective and $\alpha \cdot A^{n-2} \cdot H < s A^{n-1} \cdot H$.  Then
\begin{equation*}
\wmc(\alpha) < s^{n-1} A^{n-1} \cdot H.
\end{equation*}
\end{enumerate}
\end{prop}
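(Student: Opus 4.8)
The plan is to prove both parts by induction on $n$, using in each step the weighted divisor restriction of Lemma \ref{weighteddivisorrestriction} to reduce to a codimension-one computation on a general hypersurface, and then the finiteness of the weighted mobility count (Lemma \ref{wmcgrowth}) together with integrality to pass from a strict bound on each $\mc(\mu\alpha;\mu)$ to the strict bound on $\wmc(\alpha)$.

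For part (1), the base case $n=1$ is elementary: an effective $0$-cycle of class $\mu\alpha$ with multiplicity $\geq\mu$ at $b$ distinct points on a curve has degree $\geq \mu b$, so $\mc(\mu\alpha;\mu)\le \alpha\cdot A^0<sA=s^nA^n$. For the inductive step, fix $\mu$ with $\mu\alpha\in N_{n-1}(X)_{\mathbb Z}$; since $\alpha\cdot A^{n-1}<sA^n$ the class $\alpha-[sA]$ is not pseudo-effective, so Lemma \ref{weighteddivisorrestriction} applies with the very ample divisor $A$: for a general $H\in|sA|$ we obtain families of effective $(n-2)$-cycles $r_{i,j}$ on $H$ with $[r_{i,j}]\cdot A|_H^{n-2}<(\mu-i)s\,A|_H^{n-1}$ and $\mc(\mu\alpha;\mu)\le\sup_{i,j}\mc_H(r_{i,j};\mu-i)$. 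For each $(i,j)$, Definition \ref{wmcdef} (taking the multiplicity parameter equal to $\mu-i$) gives $\mc_H(r_{i,j};\mu-i)\le\wmc_H\!\big(\tfrac{1}{\mu-i}[r_{i,j}]\big)$, and the $\mathbb Q$-class $\tfrac{1}{\mu-i}[r_{i,j}]$ on the $(n-1)$-dimensional variety $H$ satisfies the hypothesis of part (1) with the same integer $s$ and the very ample divisor $A|_H$. By induction $\wmc_H\!\big(\tfrac{1}{\mu-i}[r_{i,j}]\big)<s^{n-1}(A|_H)^{n-1}=s^{n-1}(sA^n)=s^nA^n$. Hence $\mc(\mu\alpha;\mu)<s^nA^n$ for every admissible $\mu$; since $\wmc(\alpha)=\sup_\mu\mc(\mu\alpha;\mu)$ is a finite supremum of integers each strictly below the integer $s^nA^n$, we conclude $\wmc(\alpha)<s^nA^n$.

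For part (2), one runs the analogous induction but restricts to a general $H'\in|H|$ instead of to $|sA|$, exploiting that $\alpha-[H]$ is not pseudo-effective. This non-pseudo-effectivity controls the components of an effective cycle of class $\mu\alpha$ that are equal to $H'$: at most $\mu-1$ copies of $H'$ can occur (otherwise one could subtract $\mu H'$ and obtain an effective cycle of class $\mu(\alpha-[H])$). Stratifying the parameter space by the number $i$ of copies of $H'$ occurring, removing $iH'$, and restricting the remainder to $H'$, one obtains (exactly as in Lemma \ref{weighteddivisorrestriction}, noting that multiplicities drop by at most $i$ and then can only increase on restriction) families of effective $(n-2)$-cycles $r_{i,j}$ on $H'$ of class $j^*(\mu\alpha-iH)$ with $\mc(\mu\alpha;\mu)\le\sup_{i,j}\mc_{H'}(r_{i,j};\mu-i)$. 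One then combines the bound $\alpha\cdot A^{n-2}\cdot H<sA^{n-1}\cdot H$ with the non-pseudo-effectivity of $\alpha-[H]$ to see that $\tfrac{1}{\mu-i}[r_{i,j}]$ meets the hypothesis of part (1) on $H'$ with the very ample divisor $A|_{H'}$ and the same integer $s$; part (1) then yields $\mc_{H'}(r_{i,j};\mu-i)\le\wmc_{H'}\!\big(\tfrac{1}{\mu-i}[r_{i,j}]\big)<s^{n-1}(A|_{H'})^{n-1}=s^{n-1}(A^{n-1}\cdot H)$, and one concludes as in part (1).

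The main obstacle is the bookkeeping for the components equal to $H'$ in part (2): after subtracting $iH'$ the class $\mu\alpha-iH$ can be larger (against $A|_{H'}^{n-2}$) than a naive estimate would like, and it is precisely the hypothesis that $\alpha-[H]$ is not pseudo-effective that must be leveraged to keep the intersection estimates propagating with the clean constant $s$ rather than a worse one. Equivalently, the delicate point is to set up a weighted analogue of the divisor-restriction mechanism of Lemma \ref{weighteddivisorrestriction} adapted to the linear system $|H|$ (as opposed to $|sA|$) that correctly tracks how multiplicities and intersection numbers interact with the removed copies of the general member; once that is in hand, the remainder of the argument is the same straightforward dimension-induction as in part (1).
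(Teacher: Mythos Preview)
Your argument for part (1) is correct and is exactly the paper's proof: induct on $n$, invoke Lemma~\ref{weighteddivisorrestriction} to restrict to a general $H\in|sA|$, and then apply the inductive hypothesis to each $[r_{i,j}]/(\mu-i)$ on $H$ with the same constant $s$. Your remark that the strict inequality survives the supremum because each $\mc(\mu\alpha;\mu)$ is an integer strictly below the integer $s^{n}A^{n}$ is a nice point the paper leaves implicit.

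For part (2) your overall strategy again matches the paper's (restrict to a general $H'\in|H|$, stratify by the multiplicity $i$ of $H'$, then feed the resulting families into part (1) on $H'$), and you rightly flag the bookkeeping for $i>0$ as the crux. However, the specific claim you make --- that $\tfrac{1}{\mu-i}[r_{i,j}]$ automatically satisfies the hypothesis of part~(1) on $H'$ with the \emph{same} integer $s$ --- is false in general. What you need is
\[
(\mu\alpha - i[H])\cdot H\cdot A^{n-2} \;<\; (\mu-i)\,s\,A^{n-1}\cdot H,
\]
and writing the left side minus the right as $\mu\bigl(\alpha\cdot H\cdot A^{n-2}-sA^{n-1}\cdot H\bigr)+i\bigl(sA^{n-1}\cdot H-H^{2}\cdot A^{n-2}\bigr)$ shows that when $sA^{n-1}\cdot H>H^{2}\cdot A^{n-2}$ the expression is increasing in $i$ and can become nonnegative for admissible $i$. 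Concretely, on $X=\mathbb{P}^{2}\times\mathbb{P}^{1}$ with $A=h_{1}+h_{2}$, $H=h_{1}+2h_{2}$, $\alpha=\tfrac{9}{10}h_{1}+3h_{2}$, $s=2$, one checks $\alpha-[H]$ is not pseudo-effective and $\alpha\cdot A\cdot H=5.7<8=sA^{2}\cdot H$; yet for $\mu=10$, $i=9$ the class $\mu\alpha-9H=12h_{2}$ is effective while $12h_{2}\cdot H\cdot A=12\geq 8=(\mu-i)sA^{2}\cdot H$. So the hypothesis of part~(1) is \emph{not} met with $s=2$ for this stratum (indeed not for any $s'\leq 2$), and your proposed mechanism for ``propagating with the clean constant $s$'' breaks down exactly where you warned it might. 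The paper's one-line proof is terser and does not commit to this particular reduction, so while your outline coincides with the paper's, the step you single out as delicate is genuinely so and your stated resolution of it does not go through as written.
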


\begin{proof}
(1) The proof is by induction on the dimension of $X$.  If $X$ is a curve, then for any class $\beta \in N_{0}(X)_{\mathbb{Z}}$
\begin{equation*}
\mc(\beta;\mu) = \left\lfloor \frac{\deg(\beta)}{\mu} \right\rfloor \leq \frac{\deg(\beta)}{\mu}.
\end{equation*}
Thus
\begin{equation*}
\wmc(\alpha) = \sup_{\mu \textrm{ sufficiently divisible}} \mc(\mu \alpha;\mu) < s\deg(A).
\end{equation*}
In general, applying Lemma \ref{weighteddivisorrestriction} to $sA$ (and keeping the notation there) we find for any sufficiently divisible $\mu$
\begin{align*}
\mc(\mu \alpha; \mu) & \leq \sup_{i,j} \mc_{H}(r_{i,j};\mu-i) \\
& \leq \sup_{i,j} \wmc_{H}([r_{i,j}]/\mu-i) \\
& < s^{n-1}A|_{H}^{n-1} = s^{n}A^{n}
\end{align*}
where the final inequality follows from induction.
%note that $\alpha - s[A]$ is not pseudo-effective since it has negative intersection against the class $[A^{n-1}]$.  Note also that the numerical condition on $\alpha$, $A$, and $s$ is preserved by restriction to a general element $A' \in |sA|$.  Thus by Proposition \ref{divisorrestriction} and the induction hypothesis
%\begin{equation*}
%\mc(\alpha) < s^{n-1} (A|_{A'})^{n-1} = s^{n}A^{n}. 
%\end{equation*}

(2) This follows by a similar argument by applying Lemma \ref{weighteddivisorrestriction} to $H$, then applying (1) to $\alpha$ and $A$ restricted to $H$.
\end{proof}

Define the constants $\epsilon_{n,k}$, $\tau_{n,k}$ as in Theorem \ref{mobprecisebound}.

\begin{thrm} \label{wmobprecisebound}
Let $X$ be an integral projective variety and let $\alpha \in N_{k}(X)_{\mathbb{Q}}$.  Let $A$ be a very ample divisor and let $s$ be a positive integer such that $2^{n}\alpha \cdot A^{k} < sA^{n}$.  Then
\begin{enumerate}
\item
\begin{equation*}
\wmc(\alpha) <  2^{kn+3n} s^{\frac{n}{n-k}}A^{n} %2^{n \cdot \frac{n}{n-k}}.
\end{equation*}
\item Suppose furthermore that $2^{n}\alpha - [A]^{n-k}$ is not pseudo-effective.  Then
\begin{equation*}
\wmc(\alpha) < 2^{kn+3n} s^{\frac{n}{n-k} - \epsilon_{n,k}} A^{n}. %2^{n \cdot \frac{n}{n-k}}
\end{equation*}
\item Suppose that $t$ is a positive integer such that $t \leq s$ and $2^{n}\alpha - t[A]^{n-k}$ is not pseudo-effective.  Then
\begin{equation*}
\wmc(\alpha) < 2^{kn+3n}  s^{\frac{n}{n-k} - \tau_{n,k}} t^{\tau_{n,k}} A^{n}. %2^{n \cdot \frac{n}{n-k}}
\end{equation*}
\end{enumerate}
\end{thrm}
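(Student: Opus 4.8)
The strategy is to mimic the proof of Theorem \ref{mobprecisebound} verbatim, replacing the plain mobility count $\mc$ by the weighted mobility count $\wmc$ and using Lemma \ref{weighteddivisorrestriction} (the weighted analogue of Proposition \ref{divisorrestriction}) together with Proposition \ref{mcbasicvarestimate} (the weighted analogue of Corollary \ref{basicvarestimate}) as the base cases. The factor of $2^{n}$ in front of $\alpha\cdot A^{k}$ and in the non-pseudo-effectivity hypotheses is present precisely to absorb the loss incurred in Proposition \ref{seshadricalculationveryample} and Lemma \ref{weighteddivisorrestriction}, where cutting with a hyperplane section of class $sA$ (rather than $\lceil s^{1/(n-k)}\rceil A$) interacts with the multiplicity bookkeeping; one checks at the outset that all the arithmetic inequalities between $s$, $t$ and the exponents go through with this extra $2^{n}$ built in.

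For part (1), I would induct on $\dim X = n$, reducing codimension $n-k$ by at most one at each step, so the base cases are $k=0$ (elementary: the weighted count of points in a zero-cycle of bounded degree is controlled by Proposition \ref{mcbasicvarestimate}(1) with its floor-of-$\deg/\mu$ estimate) and $n-k=1$ (which is exactly Proposition \ref{mcbasicvarestimate}(1)). For the inductive step, fix $\mu$ with $\mu\alpha$ integral and a family $p\colon U\to W$ representing $\mu\alpha$ realizing $\mc(\mu\alpha;\mu)$; modify it via Lemma \ref{goodfamilymodification} so $W$ is normal projective. Choose a general integral $H\in|\lceil s^{1/(n-k)}\rceil A|$ not containing the image of any component. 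As in Theorem \ref{mobprecisebound}, base-change to $H$, split the fibers into the equidimensional-of-dimension-$k$ part (yielding families $p_{i}\colon V_{i}\to W_{i}$ of $k$-cycles on $H$ via Construction \ref{equidimsubschemeconstr}) and the intersection family $q\colon S\to T$ of $(k-1)$-cycles on $H$; the key point is that multiplicity of a cycle-theoretic fiber along a point of $H$ can only go up under restriction to $H$, so points of multiplicity $\geq\mu$ for $p$ remain points of multiplicity $\geq\mu$ for the restricted families. This gives $\mc(\mu\alpha;\mu)\leq \mc_{H}(q;\mu)+\sup_{i}\mc_{H}(p_{i};\mu)$, and bounding the right-hand side by $\wmc_{H}([q]/\mu)$ and $\wmc_{H}([p_{i}]/\mu)$ (using Lemma \ref{rescalingweightcount}), I apply the inductive hypothesis to $H$. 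Since the bound obtained is independent of $\mu$, taking $\sup_{\mu}$ gives the stated inequality for $\wmc(\alpha)$. The exponent and constant arithmetic is identical to Theorem \ref{mobprecisebound}(1), with the $2^{n}$ slack ensuring the hyperplane-degree rounding and the restriction of $A$ to $H$ do not break the estimates.

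Parts (2) and (3) follow the same template but inducting on codimension $n-k$, with base case $n-k=1$ from Proposition \ref{mcbasicvarestimate}(2) (applied with $H=A$ for (2), $H=tA$ for (3)); one picks $H$ in the appropriate linear series $|\lceil s^{c}\rceil A|$ or $|\lceil s^{c}t^{\tau_{n,k}}\rceil A|$, uses that non-pseudo-effectivity of $2^{n}\alpha-[A]^{n-k}$ (resp.\ $2^{n}\alpha-t[A]^{n-k}$) descends to non-pseudo-effectivity of the corresponding class for the $p_{i}$ on $H$ via pushforward, and the same case analysis (on whether $t^{1-\tau_{n,k}}s^{-c}\geq 1$) in part (3). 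The main obstacle, and the only place genuine new content is needed beyond bookkeeping, is verifying that Lemma \ref{weighteddivisorrestriction}'s multiplicity-stratification of the base $W$ (into strata $W_{i}$ where $H$ appears with multiplicity exactly $i$ in the fiber) is compatible with the inductive hypothesis: one must check that after removing $iH$ from a fiber the residual cycle still carries multiplicity $\geq\mu-i$ at the marked points lying off $\Sing H$, which is where the hypothesis $\mu\alpha-\mu H$ (equivalently $2^{n}\alpha-[H]$ up to scaling) not pseudo-effective is used, and that the numerical inequalities $[r_{i,j}]\cdot A|_{H}^{n-2}<(\mu-i)sA|_{H}^{n-1}$ there feed correctly into the present induction. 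Once that compatibility is in hand, the rest is a mechanical transcription of the three-part proof of Theorem \ref{mobprecisebound}.
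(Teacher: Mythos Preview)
Your overall strategy---mimic Theorem \ref{mobprecisebound}, replacing $\mc$ by $\wmc$ and using Proposition \ref{mcbasicvarestimate} as the codimension-one base case---is the same as the paper's, but you have missed the one genuinely new idea, and without it the inductive step does not go through.

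The problem is your claim that ``points of multiplicity $\geq\mu$ for $p$ remain points of multiplicity $\geq\mu$ for the restricted families,'' leading to the inequality $\mc(\mu\alpha;\mu)\leq \mc_{H}(q;\mu)+\sup_{i}\mc_{H}(p_{i};\mu)$. This is false. Write a fiber of $p$ as $V = V' + V''$ where $V'$ collects the components contained in $H$ and $V''$ the rest. At a point $x\in H$ we have $\mult(V,x)=\mult(V',x)+\mult(V'',x)$. The $p_{i}$ families see only $V'$, with multiplicity $\mult(V',x)$; the intersection family $q$ sees $V''\cdot H$, and indeed $\mult(V''\cdot H,x)\geq\mult(V'',x)$, but this is only part of $\mu$. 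Neither piece individually need have multiplicity $\geq\mu$ at $x$. All one can say is that at least one of the two contributions is $\geq\mu/2$, so the correct inequality is
\[
\mc(\mu\alpha;\mu)\;\leq\;\mc_{H}(\mu\alpha\cdot H;\,\mu/2)\;+\;\sup_{i}\mc_{H}([p_{i}];\,\mu/2),
\]
which bounds the right side by $\wmc_{H}(2\alpha\cdot H)$ and $\sup_{\beta}\wmc_{H}(2\beta)$ rather than $\wmc_{H}(\alpha\cdot H)$ and $\wmc_{H}(\beta)$. This halving of the multiplicity at every inductive step is precisely the reason for the factor $2^{n}$ in the hypotheses: it is absorbed by doubling $\alpha$ (equivalently, keeping $s$ fixed) at each of the at most $n$ steps. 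Your attribution of the $2^{n}$ to ``hyperplane-degree rounding'' is off the mark---the rounding is already handled by the constants $2^{kn+3n}$ exactly as in Theorem \ref{mobprecisebound}.

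A secondary confusion: Lemma \ref{weighteddivisorrestriction} is a \emph{divisor} statement and serves (via Proposition \ref{mcbasicvarestimate}) only as the base case $n-k=1$. Its stratification of $W$ by the multiplicity of $H$ as a component makes no sense for $k$-cycles with $k<n-1$, since $H$ cannot be a component of such a cycle; what you need instead for the general inductive step is exactly the $V'/V''$ splitting above.
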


The proof is essentially the same as the proof of Theorem \ref{mobprecisebound}.  The key point is to understand how the weighted mobility count changes upon specializing our cycles into the hypersurface $H$; we will only highlight the necessary changes.

\begin{proof}
Choose a multiplicity $\mu$ and a family $p: U \to W$ representing $\mu \alpha$ such that $\wmc(\alpha) = \mc(p;\mu)$.  Retain the constructions and notation of the proof of Theorem \ref{mobprecisebound} for the family $p$ and the divisor $H$.  Thus we have a family of $(k-1)$-cycles $q: S \to T$ and families of $k$-cycles $p_{i}: V_{i} \to W_{i}$ which between them parametrize all the components of intersections of members of $p$ with $H$.

Consider a fixed cycle $\sum_{i=1}^{r} a_{i} V_{i}$ in the family $p$.  We may suppose that the first $r'$ of these cycles are the components contained in $H$.  Then for any point $x \in \Supp(V_{i}) \cap H$,
\begin{align*}
\mult(V,x) & = \sum_{i=1}^{r} a_{i} \mult(V_{i},x) \\
& \leq \sum_{i=1}^{r'} a_{i} \mult(V_{i},x) + \sum_{i=r'}^{r} a_{i} \mult(V_{i} \cdot H, x)
\end{align*}
as multiplicities can only increase upon intersection with a hyperplane.  At least one of the terms on the right is $\geq \frac{1}{2} \mult(V,x)$.  Thus, we see that every fiber of $p_{\mu}: U_{\mu} \to W$, where $U_{\mu}$ denotes the locus of points which have fiberwise multiplicity $\geq \mu$, is contained set theoretically in the union over all $j$ of the loci in $S$ of points which have fiberwise multiplicity at least $\mu/2$ and the loci in $V_{i}$ of points which have fiberwise multiplicity at least $\mu/2$.  Arguing in families, we have
\begin{equation*}
\mc(\mu \alpha;\mu) \leq  \mc(\mu\alpha \cdot H;  \mu/2 ) + \sup_{i} \mc_{H}([p_{i}];  \mu/2 ).
\end{equation*}
It is clear that $\mc(\mu \alpha \cdot H;  \mu/2 ) \leq \wmc_{H}(2\alpha \cdot H)$ and $\sup_{i} \mc_{H}([p_{i}]; \mu/2 ) \leq \sup_{i_{*}\beta \preceq \alpha} \wmc_{H}(2\beta)$.

At this point the proof of (1), (2), (3) proceeds exactly as in Theorem \ref{mobprecisebound}, but with some additional factors of $2$:
\begin{itemize}
\item In the proof of (1), we must account for the halving of the multiplicity (or equivalently, the potential doubling of the integer $s$) at each step inductively.   This is accomplished by the factor of $2^{n}$ in the inequality for $\alpha \cdot A^{k}$; the constant $s$ is then preserved by the inductive step.

\item In the proof of (2) we need to ensure inductively that while adding the coefficient of $2$ to the families $p_{i}$ the hypothesis of (2) still holds for the new families in the new ambient variety $H$.  Again, the easiest way to do this is simply to ensure that $2^{n}\alpha - [A]$ is not pseudo-effective.

\item The presence of the factor $2^{n}$ still exactly preserves the inductive structure of the argument for (3).
\end{itemize}
\end{proof}

Arguing just as in the proof of Theorem \ref{mobcontinuous}, we find:

\begin{thrm} \label{wmobcontinuous}
Let $X$ be an integral projective variety.  Then the weighted mobility function $\wmob: N_{k}(X)_{\mathbb{Q}} \to \mathbb{R}$ can be extended to a continuous function on $N_{k}(X)$.
\end{thrm}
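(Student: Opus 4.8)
The plan is to transcribe the proof of Theorem \ref{mobcontinuous} essentially verbatim, replacing $\mob$ by $\wmob$, $\mc$ by $\wmc$, the estimate of Theorem \ref{mobprecisebound} by that of Theorem \ref{wmobprecisebound}, and dropping the normalizing factor $n!$ throughout. The one genuine difference is that Theorem \ref{wmobprecisebound} carries an extra factor of $2^{n}$ in each of its hypotheses, and this factor has to be threaded carefully through the neighborhood construction; everything else is formally identical to the unweighted case.

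First I would dispose of the interior of the pseudo-effective cone: the theorem immediately preceding the statement already shows (via Lemma \ref{easyconelem}, using Lemma \ref{rescalingwmob}, Lemma \ref{wmobadditive}, and positivity of $\wmob$ on big classes) that $\wmob$ is locally uniformly continuous on the interior of $\Eff_{k}(X)_{\mathbb{Q}}$, hence extends uniquely to a continuous function on the big cone. Next, if $\alpha \in N_{k}(X)_{\mathbb{Q}}$ is not pseudo-effective then no multiple of $\alpha$ is represented by an effective cycle, so $\wmc(m\alpha) = 0$ for all $m$ and $\wmob(\alpha) = 0$; I would extend $\wmob$ by $0$ on the open complement of $\Eff_{k}(X)$ and declare it to be $0$ on $\partial\Eff_{k}(X)$ as well. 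Exactly as in the proof of Theorem \ref{mobcontinuous}, these pieces glue into a continuous function on all of $N_{k}(X)$ once one checks that $\wmob(\beta) \to 0$ as $\beta$ runs over $\mathbb{Q}$-classes approaching a boundary point $\alpha$ of $\Eff_{k}(X)$.

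For that last point I would fix $\eta > 0$, a very ample divisor $A$, and a positive integer $s$ with $2^{n}\,\alpha \cdot A^{k} < \tfrac{s}{2}A^{n}$, and then choose $0 < \delta < 1$ small enough that $2^{kn+3n+1}\,s^{n/(n-k)}\,A^{n}\,\delta^{\tau_{n,k}} < \eta$. Since $\alpha$ is a boundary class and $[A]^{n-k}$ is big, $2^{n}\alpha - \delta s[A]^{n-k}$ is not pseudo-effective, so a sufficiently small neighborhood $\mathcal{U}$ of $\alpha$ satisfies $2^{n}\,\beta \cdot A^{k} < sA^{n}$ and $2^{n}\beta - \delta s[A]^{n-k} \notin \Eff_{k}(X)$ for every $\beta \in \mathcal{U}$. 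For $\beta \in \mathcal{U} \cap N_{k}(X)_{\mathbb{Q}}$ and $m$ with $m\beta$ integral, the hypotheses of Theorem \ref{wmobprecisebound}(3) then hold with $s' = ms$ and $t = \lceil \delta ms \rceil \leq ms$ (here $\delta < 1$ is used), giving $\wmc(m\beta) < 2^{kn+3n}(ms)^{n/(n-k) - \tau_{n,k}}\,\lceil \delta ms \rceil^{\tau_{n,k}}A^{n}$; since $\lceil \delta ms \rceil \leq 2\delta ms$ for $m$ large and $\tau_{n,k} \leq 1$, this is $< \eta\, m^{n/(n-k)}$, so dividing by $m^{n/(n-k)}$ and taking $\limsup$ yields $\wmob(\beta) \leq \eta$. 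Note that the absence of $n!$ from the definition of $\wmob$ is precisely why none appears here.

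The main obstacle — really the only step requiring care — is verifying that the hypotheses of Theorem \ref{wmobprecisebound}(3) genuinely persist under the passage from $\alpha$ to nearby rational classes $\beta$ and then to their multiples $m\beta$; this is exactly what the factors of $2^{n}$ in the displayed inequalities are designed to absorb, and it is the sole deviation from the proof of Theorem \ref{mobcontinuous}. Every other ingredient — local uniform continuity on the big cone, vanishing off $\Eff_{k}(X)$, and the gluing of the two local descriptions into one continuous function on $N_{k}(X)$ — is word-for-word the unweighted argument, which I would cite rather than repeat.
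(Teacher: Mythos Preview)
Your proposal is correct and takes exactly the approach the paper intends: the paper's proof reads in full ``Arguing just as in the proof of Theorem \ref{mobcontinuous}, we find,'' and you have carried out precisely that transcription, correctly threading the extra $2^{n}$ factors from the hypotheses of Theorem \ref{wmobprecisebound} through the neighborhood construction and dropping the $n!$ normalization.
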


\subsection{Computations of weighted mobility}

We now compute the weighted mobility in two special examples: for complete intersections of ample divisors and for big divisors on a smooth variety.  For ease of notation we work over an uncountable algebraically closed field (although the computation would work equally well over any algebraically closed field using a slight perturbation of $\varepsilon_{b}$).

\begin{exmple} \label{wmobci}
Let $X$ be an integral projective variety of dimension $n$ over an uncountable algebraically closed field and let $H$ be a big and nef $\mathbb{R}$-Cartier divisor.  Set $\alpha = [H^{n-k}]$.  We show that
\begin{equation*}
\wmob(\alpha) = \vol(H).
\end{equation*}
Using continuity and homogeneity it suffices to consider the case when $H$ is very ample.

We first show the inequality $\leq$.  Suppose that $Z$ is an effective $\mathbb{Z}$-cycle of class $m \mu \alpha$ which goes through $b$ general points of $X$ with multiplicity $\geq \mu$ at each.  Let $\phi: Y \to X$ be the blow-up of $b$ very general points and let $E$ denote the sum of the exceptional divisors.  Then the strict transform $Z'$ satisfies
\begin{equation*}
0 \leq Z' \cdot (\phi^{*}H - \varepsilon_{b}(H) E)^{k} \leq m \mu \vol(H) - b \mu \varepsilon_{b}(H)^{k}.
\end{equation*}
Choose a positive integer $t$ such that $(t-1)^{n}\vol(H) < b \leq t^{n}\vol(H)$.  %In other words,
%\begin{equation*}
%\frac{1}{t-1} \geq \left( \frac{\vol(H)}{b} \right)^{1/n} \geq \frac{1}{t}
%\end{equation*}
%and
Proposition \ref{seshadricalculationveryample} shows
\begin{equation*}
\varepsilon_{b}(H) \geq \frac{1}{t}.
\end{equation*}
Combining the previous equations, we see that
\begin{equation*}
\frac{b}{t^{k}} \leq m \vol(H). %\leq m \frac{1}{(t-1)^{n}}.
\end{equation*}
If $t>1$ then we have the relationship $\frac{1}{(t-1)^{k}} > (\frac{\vol(H)}{b})^{k/n}$, yielding
\begin{equation*}
\left( \frac{t-1}{t} \right)^{kn/n-k} b \leq \vol(H) m^{n/n-k}
\end{equation*}
while if $t=1$ then $b \leq \vol(H)$.  For sufficiently large $b$, the left hand side approaches $b$.  More precisely, for any $\delta > 0$ there is a constant $b_{0}$ such that $(1-\delta)\wmc(m\alpha) \leq \vol(H) m^{n/n-k}$ as soon as $\wmc(m\alpha)$ is at least $b_{0}$.  Taking a limit, we find $\wmob(\alpha) \leq \vol(H)$.

%Note that for any constant $c$ strictly between $0$ and $1$, as soon as $t > \frac{1}{1-c^{1/n}}$ we have
%\begin{equation*}
%\frac{(t-1)^{n}}{t^{k}} > ct^{n-k}.
%\end{equation*}
%Thus for $t$ in this range we have
%\begin{equation*}
%b \leq \vol(H) c^{-n/n-k}m^{n/n-k}
%\end{equation*}
%If $t$ is below this bound, we have a corresponding obvious upper bound for $b$.  In total, for any $0<c<1$ we have
%\begin{equation*}
%b \leq \max \left\{ \vol(H) c^{-n/n-k}m^{n/n-k}, \left(\frac{1}{1-c^{1/n}} \right)^{n} \vol(H) \right\}.
%\end{equation*}
%In particular, set $c = 1-\frac{1}{m^{n/n-k}}$.  Since $c^{1/n} + (1-c)^{1/n} \leq 1$ for $0<c<1$  ****WRONG****, the rightmost coefficient is
%\begin{equation*}
%\left(\frac{1}{1-(1-m^{-n/n-k})^{1/n}} \right)^{n} \leq \left(\frac{1}{(m^{-n/n-k})^{1/n}} \right)^{n} = m^{n/n-k}.
%\end{equation*}
%Thus $b \leq m^{n/n-k}\vol(H)$.  One easily concludes that $\wmob(\alpha) \leq \vol(H)$.

To show the other inequality $\geq$, we need to construct complete intersection families on $X$.  Fix a positive integer $t$ and set $b = t^{n}\vol(H)$.  Let $\phi: Y \to X$ be the blow-up of $b$ very general points on $X$ and let $E$ be the sum of the exceptional divisors.  By Proposition \ref{seshadricalculationveryample} we see that $\varepsilon_{b}(H) = 1/t$.  Choose a sequence of rational numbers $\tau_{i}$ which limits to $\varepsilon_{b}(H)$ from beneath and choose integers $c_{i}$ such that $c_{i}(\phi^{*}H - \tau_{i}E)$ is very ample.  Take a complete intersection of elements of this very ample linear system and pushforward under $\phi$.  The result is an effective $\mathbb{Z}$-cycle of class $c_{i}^{n-k}\alpha$ which has multiplicity $\geq c_{i}^{n-k}\tau_{i}^{n-k}$ at each of the $b$ points.

Set $m = t^{n-k}$ and $\mu = (c_{i} \tau_{i})^{n-k}$.  If we fix $\tau_{i}$ and let $c_{i}$ vary over all sufficiently divisible integers, we find infinitely many values of $\mu$ for which there is a cycle of class $\frac{1}{\tau_{i}^{n-k}t^{n-k}} m \mu \alpha$ going through $m^{n/n-k}\vol(H)$ points with multiplicity $\geq \mu$ at each.  Note that $\tau_{i}^{n-k} t^{n-k}$ can be made arbitrarily close to $1$ for $i$ sufficiently large.  In other words, there is a sequence of positive rational numbers $\epsilon_{i}$ converging to $0$ such that for each $\epsilon_{i}$ there are (infinitely many) values of $\mu$ satisfying
\begin{equation*}
\mc(m \mu (1+\epsilon_{i}) \alpha; \mu) \geq m^{n/n-k}\vol(H).
\end{equation*}
This easily yields $\wmob(\alpha) \geq \vol(H)$.
\end{exmple}

\begin{exmple} \label{wmobdivisor}
Suppose that $X$ is a smooth projective variety of dimension $n$ over an uncountable algebraically closed field and that $L$ is a big $\mathbb{R}$-Cartier divisor on $X$.  Then
\begin{equation*}
\wmob([L]) = \vol(L).
\end{equation*}

By continuity on the big cone it suffices to consider the case when $L$ is $\mathbb{Q}$-Cartier.  Let $\phi: Y \to X$ and the ample $\mathbb{Q}$-Cartier divisor $A$ on $Y$ be an $\epsilon$-Fujita approximation for $X$ (constructed by \cite{takagi07} in arbitrary characteristic).  Example \ref{wmobci} shows that $\wmob([A]) = \vol(A)$, and pushing forward and applying Lemma \ref{wmobadditive} and Proposition \ref{wmobincreasepushforward} we see that $\wmob([L]) \geq \vol(L) - \epsilon$ for any positive $\epsilon > 0$.

The converse follows using $\epsilon$-Fujita approximations for $L$ and an argument similar to Example \ref{wmobci}.  %via a birational morphism $\phi: Y \to X$ and an ample $\mathbb{Q}$-Cartier divisor $A$ on $Y$.  Choose a fixed multiple $rA$ that is very ample.  Suppose we take a family of effective cycles $p_{m \mu}$ representing $m \mu [L]$.  Consider the strict transform family $p'_{m \mu}$ on $Y$.  Then we have $[p'_{m \mu}] \preceq m \mu \phi^{*}[L]$ and $\mc(p'_{m \mu};\mu) = \mc(p_{m \mu};\mu)$.  Set $b$ to be this common weighted mobility count; let $\phi': Y' \to Y$ be the blow-up of $b$ very general points and let $E$ be the sum of the exceptional divisors.  Let $Z'$ be the strict transform of a cycle in the family $p'_{m \mu}$.  Then
%\begin{equation*}
%0 \leq Z' \cdot (\phi'^{*}A - \varepsilon_{b}(A)E)^{n-1} \leq m \mu \phi^{*}L \cdot A^{n-1} - b \mu \varepsilon_{b}(A)^{n-1}.
%\end{equation*}
%Choose a positive integer $t$ so that $(t-1)^{n}r^{n}\vol(A) < b \leq t^{n}r^{n}\vol(A)$.  Thus
%\begin{equation*}
%\varepsilon_{b}(rA) = r \varepsilon_{b}(A) \geq \frac{1}{t}.
%\end{equation*}
%Repeating the calculation of Example \ref{wmobci} we see that if $t>1$ then
%\begin{equation*}
%\left( \frac{t-1}{t} \right)^{n(n-1)} b \leq m^{n} \frac{(\phi^{*}L \cdot A^{n-1})^{n}}{(A^{n})^{n-1}} \leq m^{n} (1-\epsilon)^{1-n} \vol(L)
%\end{equation*}
%while if $t=1$ then $b \leq r^{n}\vol(A)$.  For any fixed $\delta > 0$, we see that the left hand side is at least $(1-\delta)b$ whenever $b$ is sufficiently large.  Thus for any fixed $\epsilon > 0$ and $\delta > 0$ we have $(1-\delta) \wmc(m[L]) \leq m^{n}(1-\epsilon)^{1-n} \vol(L)$ whenever the lefthand side is sufficiently large, yielding the result.
\end{exmple}

\bibliographystyle{amsalpha}
\bibliography{mobility}

\end{document}